\documentclass[reqno,11pt]{amsart}
\usepackage[english]{babel}
\usepackage{amssymb,verbatim,microtype,graphicx}
\usepackage[T1,T5]{fontenc}
\usepackage[utf8]{inputenc} 

\hyphenation{pluri-sub-har-mon-ic}
\hyphenation{pluri-potential}
\newtheorem{theorem}{Theorem}[section]

\newtheorem{lemma}[theorem]{Lemma}

\newtheorem{corollary}[theorem]{Corollary}
\newtheorem{conjecture}[theorem]{Conjecture}

\theoremstyle{definition}
\newtheorem{definition}[theorem]{Definition}

\newtheorem{remark}[theorem]{Remark}

\newtheorem*{ackn}{Acknowledgements}

\numberwithin{equation}{section}

\newcommand{\C}{\mbox{$\mathbb{C}$}}

\newcommand{\B}{\mbox{$\mathbb{B}$}}

\newcommand{\Capo}{\mbox{Cap}_{\Omega}}

\newcommand{\Ker}{\mbox{$\mathcal{N}$}}
\newcommand{\E}{\mbox{$\mathcal{E}$}}
\newcommand{\Eo}{\mbox{$\mathcal{E}_{0}$}}

\newcommand{\F}{\mbox{$\mathcal{F}$}}

\newcommand{\PSH}[1]{\mbox{$\mathcal{PSH}(#1)$}}
\newcommand{\PSHM}[1]{\mbox{$\mathcal{PSH}^-(#1)$}}

 \usepackage{hyperref}
\hypersetup{
    unicode=false,
    pdftoolbar=true,
    pdfmenubar=true,
    pdffitwindow=false,
    pdfstartview={FitH},
    pdftitle={Geodesic connectivity and rooftop envelopes in the Cegrell classes},
    pdfauthor={Ahag, Czyz, Lu and Rashkovskii},
    colorlinks=true,
   linkcolor=black,
    citecolor=black,
    filecolor=black,
    urlcolor=black}

\frenchspacing

\textwidth=13.5cm
\textheight=23cm
\parindent=16pt
\topmargin=-0.5cm

\setcounter{tocdepth}{1}

\begin{document}

\title[Geodesics and envelopes in the Cegrell classes]{Geodesic connectivity and rooftop envelopes\\ in the Cegrell classes}

\author{Per \AA hag}\address{Department of Mathematics and Mathematical Statistics\\ Ume\aa \ University\\SE-901~87 Ume\aa \\ Sweden}\email{per.ahag@umu.se}

\author{Rafa\l\ Czy{\.z}}\address{Faculty of Mathematics and Computer Science, Jagiellonian University, \L ojasiewicza~6, 30-348 Krak\'ow, Poland}
\email{rafal.czyz@im.uj.edu.pl}

\author{Chinh H. Lu}\address{Laboratoire Angevin de Recherche en Math\'{e}matiques (LAREMA), Universit\'{e} d'Angers (UA), 2 Boulevard de Lavoisier, 49000 Angers, France}
\email{hoangchinh.lu@univ-angers.fr}

\author{Alexander Rashkovskii}\address{Department of Mathematics and Physics, University of Stavanger, 4036 Stavanger, Norway}\email{alexander.rashkovskii@uis.no}

\keywords{Envelope, Geodesic, Monge-Amp\`ere equation,  Plurisubharmonic function, Rooftop envelope, Uniqueness}
\subjclass[2020]{Primary. 32U05, 32U35, 32W20; Secondary. 32F17, 53C22}
\date{\today}

\begin{abstract} This study examines geodesics and plurisubharmonic envelopes within the Cegrell classes on bounded hyperconvex domains in $\mathbb{C}^n$. We establish that solutions possessing comparable singularities to the complex Monge-Ampère equation are identical, affirmatively addressing a longstanding open question raised by Cegrell. This achievement furnishes the most general form of the Bedford-Taylor comparison principle within the Cegrell classes. Building on this foundational result, we explore plurisubharmonic geodesics, broadening the criteria for geodesic connectivity among plurisubharmonic functions with connectable boundary values. Our investigation also delves into the notion of rooftop envelopes, revealing that the rooftop equality condition and the idempotency conjecture are valid under substantially weaker conditions than previously established, a finding made possible by our proven uniqueness result. The paper concludes by discussing the core open problems within the Cegrell classes related to the complex Monge-Ampère equation.
\end{abstract}

\maketitle

\tableofcontents

\section{Introduction}
Since the appearance of the celebrated works of Bedford-Taylor \cite{BT76}, Calabi \cite{Cal57}, and Yau \cite{Yau78}, complex Monge-Ampère equations have occupied a central place in complex analysis and geometry. Acting on a smooth plurisubharmonic function, the Monge-Ampère operator is the determinant of its Hessian matrix, thereby defining a positive volume form. This is a consequence of the fact that the associated $(1,1)$-form is positive, and the determinant of the matrix is realized as the wedge product of this $(1,1)$-form. For non-smooth plurisubharmonic functions, it remains positive but in the weak sense of currents, making the definition of the Monge-Ampère operator for these functions a delicate task, as the wedge product of positive currents cannot be straightforwardly taken.

The breakthrough by Bedford-Taylor \cite{BT76} succeeded in defining the complex Monge-Ampère operator for locally bounded plurisubharmonic functions, thus paving the way for pluripotential theory -- the multidimensional counterpart of potential theory in the complex plane. An indispensable tool in this theory is the comparison principle, which in its simplest form states that if two bounded plurisubharmonic functions within a bounded domain share the same boundary values and Monge-Ampère measure, they are identical.

An example by Shiffman and Taylor \cite{siu} demonstrates that the Monge-Ampère operator cannot be defined for all unbounded plurisubharmonic functions. In his seminal works \cite{Ceg98,Ceg04,Ceg08}, Cegrell introduced several classes of unbounded plurisubharmonic functions for which the Monge-Ampère operator is well-defined and exhibits all the expected continuity properties. Subsequently, the Cegrell classes have attracted extensive study by numerous authors. Noteworthy is \cite[Theorem 4.14]{ACCP09}, which established that the Monge-Ampère equation admits a solution provided a subsolution exists, thus extending the renowned bounded subsolution theorem by Ko{\l}odziej \cite{Kol96}. Since the mid-1970s, the mathematical community has faced the intricate challenge of establishing a comparison principle for Monge-Ampère measures of plurisubharmonic functions that charge pluripolar sets, a challenge that has constrained progress considerably. It is worth mentioning that, according to \cite[Corollary 5.4]{Dem93} and \cite[Example 3.4]{Zer97}, solutions can possess non-comparable singularities. A significant advancement was achieved in \cite[Theorem 3.6]{ACCP09}, where it was shown that comparable solutions are equivalent, provided their Monge-Ampère measure is integrable against some negative plurisubharmonic function. However, as illustrated in \cite{Ceg08}, this condition is restrictive; for example, a bounded plurisubharmonic function with zero boundary values may have a Monge-Ampère measure that is not integrable against any negative plurisubharmonic function. The necessity of this condition has been a core open question in pluripotential theory.

In the first main result of this paper, we eliminate the above integrability condition, thereby establishing an analog of the Bedford-Taylor comparison principle that applies to all functions within the Cegrell classes.

\bigskip

\noindent {\bf Theorem~\ref{thm: uniqueness in N}}. \emph{
    Let $\Omega$ be a bounded hyperconvex domain in $\mathbb{C}^n$, and let $u,v$ be plurisubharmonic functions in the Cegrell class $\Ker(H)$, for some $H\in \E(\Omega)$. If
\[
u\preceq  v\quad \text{and} \quad (dd^c u)^n \leq (dd^c v)^n,
\]
then $u\geq v$.
}

\medskip

The result was previously established for cases where the measure $(dd^c v)^n$ does not assign mass to pluripolar sets, as demonstrated in \cite[Corollary 3.2]{ACCP09}. The notation $u \preceq v$ signifies that $u$ is more singular than $v$, with further details provided in Definition~\ref{def: singular}. Here, $\E(\Omega)$ is the largest class of plurisubharmonic functions on which the Monge-Ampère operator is well-defined and exhibits continuity along decreasing sequences. The condition $u, v \in \Ker(H)$ implies that $u$ and $v$ share boundary values, as specified by $H$. Definitions and fundamental properties of these classes are elaborated upon in the following section.

Our proof is based on a technique of plurisubharmonic rooftop envelopes, a method recently utilized in multiple papers, including \cite{DDL5},  \cite{GL21a, GL22, GL23Crelle}, and \cite{LN22,Sal23}. It is also noteworthy that \cite{Sal23} established uniqueness for solutions involving non-pluripolar measures and non-Kähler forms. We establish here some important properties of the envelopes and there Monge-Ampère measures, see Theorem~\ref{thm: envelope} and its corollaries, which has allowed us to treat the general case of functions in Cegrell classes with controlled boundary behavior.

As applications of Theorem~\ref{thm: uniqueness in N}, we provide affirmative answers to several questions concerning geodesic connectivity and rooftop envelopes. The concept of plurisubharmonic geodesics was first introduced in Mabuchi's seminal work on constant scalar curvature Kähler metrics \cite{Mab86}. Subsequently, Semmes \cite{Sem92} and Donaldson \cite{Don99} independently showed that Mabuchi geodesics can be understood as solutions to certain degenerate homogeneous complex Monge-Ampère equations. This concept has been further refined to describe geodesics as the upper envelopes of subgeodesics, a perspective that Berman-Berndtsson \cite{BB22}, Abja \cite{Abj19}, Abja-Dinew \cite{AD21}, and Rashkovskii \cite{Ras17} adapted to the local setting. We direct readers to \cite{Ras23} for a comprehensive overview of this topic.

Geodesics can be seen as optimal plurisubharmonic interpolations between pairs of plurisubharmonic functions, somewhat similar to the classical Calder\'on complex interpolation in Banach spaces. While constructing a geodesic segment $u_t$, $0<t<1$, between two bounded plurisubharmonic functions $u_0$ and $u_1$ is straightforward, the general case poses challenges in boundary behavior of the geodesics at the endpoints. Namely, the limits of $u_t$ as $t$ approaches $0$ and $1$ may deviate from $u_0$ and $u_1$, leading to a disconnection. In \cite{Rash22}, it was shown that strong singularities within the domain and at its boundary remain invariant under plurisubharmonic interpolation, preventing connectivity when singularities in the data functions differ. Key to addressing this was the asymptotic rooftop envelopes $P[u](v)$ and the Green-Poisson residual function $g_u = P[u](0)$. For definitions, see Section~\ref{sec:RIC}. Notably, $g_u$ is defined by the asymptotic behavior of $u$ near its negative infinity points, both inside the domain and on the boundary.

As shown in \cite{Rash22}, Darvas' method \cite{Dar17} can be readily adapted to prove that $u_0$ and $u_1$ are geodesically connected if and only if $P[u_0](u_1) = u_1$ and $P[u_1](u_0) = u_0$.  Furthermore, it was conjectured that these conditions are equivalent to having $g_{u_0} = g_{u_1}$, signifying identical leading terms in their singularities. This has been confirmed in~\cite{Ras23} for certain classes of plurisubharmonic  functions. Leveraging Theorem~\ref{thm: uniqueness in N}, we significantly extend this connectivity criterion:

\bigskip

\noindent {\bf Theorem~\ref{thm: geodesic connectivity}}.\emph{
  Given $H_0,H_1 \in \E$ that are connectable by a plurisubharmonic geodesic, and $u_0\in\Ker(H_0)$, $u_1\in\Ker(H_1)$, then $u_0$ and $u_1$ can be connected by a  plurisubharmonic geodesic segment if and only if
\begin{equation}
    \label{eq: geo connectivity intro}
    u_0\leq g_{u_1}\quad \text{ and }\quad u_1\leq g_{u_0}.
\end{equation}
   In particular, if $g_{H_0}=g_{H_1}$, then \eqref{eq: geo connectivity intro} is equivalent to $g_{u_0}=g_{u_1}$.}

\medskip

 Apart from the uniqueness Theorem~\ref{thm: uniqueness in N}, the second crucial step of the proof is the rooftop equality $P[u](v)= P(g_u,v)$, conjectured in \cite{Rash22} in the general case and proved there and in \cite{Ras23} for particular cases. In  Theorem~\ref{thm: RE conj N}, we have extended it significantly.

\bigskip

\noindent {\bf Theorem~\ref{thm: RE conj N}.}\emph{
    Assume $H_1\in \E$, $H_2\in \PSHM{\Omega}$, and
    \[
P[H_1](H_2)= P(g_{H_1},H_2).
\]
Then $P[u](v)= P(g_u,v)$,  for all $u\in \Ker(H_1)$, $v\in \Ker(H_2)$. }

\medskip

In other words, our finding above confirms that if the rooftop equality is verified for $H_1,H_2$, then it equally applies to every $u \in \Ker(H_1)$, $v\in \Ker(H_2)$. Since the rooftop equality trivially holds for $H_1=0$, by Theorem~\ref{thm: RE conj N} we see that it holds for all $u\in \Ker$, $v\in \PSHM{\Omega}$.

We also highlight that the functions $H_2$ and $v$ may not be in $\mathcal{E}$. Analyzing the Monge-Ampère operator presents challenges due to its undefined status for both $P[u](v)$ and $P(g_u, v)$.  Nonetheless, the singular part of their difference is manageable, and by applying the plurisubharmonic envelope, one secures a function in $\mathcal{E}$, where the Monge-Ampère measure vanishes.

In the global context on a compact Kähler manifold $(X,\omega)$ of dimension $n$, for given $\omega$-plurisubharmonic functions $u,v$ in the same relative full mass class $\mathcal{E}(X,\omega,\phi)$, the rooftop equality
$$P[u](v)=P(P[u],v)$$
is satisfied if the non-pluripolar Monge-Ampère measure $(\omega + dd^c u)^n$ possesses positive mass \cite[Theorem~3.14]{DDL23}. Due to the monotonicity of non-pluripolar Monge-Ampère masses, this positive mass condition for $u$ ensures the same for its asymptotic rooftop $P[u](0)=\phi$. In contrast, in our local setting, the non-pluripolar Monge-Ampère measure for $g_u$ equals zero, and the monotonicity does not apply. Moreover, unlike in the global context, functions may have infinite non-pluripolar Monge-Ampère mass. Additionally, the boundary behavior of plurisubharmonic functions plays an immense role, highlighting an undeveloped area in pluripotential theory.

As emphasized, the boundary functions $H$ play a crucial role in our results. Ph\d{a}m \cite{PhamPC} has announced that functions in $\E$, whose Monge-Ampère measure integrates a non-trivial negative plurisubharmonic function, belong to $\Ker(H)$ when $H$ is maximal. We refer to $H$ as the boundary values for these functions. In Theorem~\ref{bvinN}, we provide a proof that slightly diverges from Ph\d{a}m's unpublished approach.

\bigskip

\noindent {\bf Theorem~\ref{bvinN}.} \emph{If $u\in \E$ and $\int_{\Omega} (-w)(dd^c u)^n<+\infty$, for some $w\in \mathcal{PSH}^-(\Omega)$, $w<0$, then $u$ has boundary values.}

\medskip

The paper is organized as follows. In Section~\ref{Sec:Preliminaries}, we provide the necessary definitions and preliminary results, including fundamental properties of envelopes and rooftop envelopes. In Section~\ref{sect: uniqueness}, we prove Theorem~\ref{thm: uniqueness in N}, establishing the general form of the Bedford-Taylor comparison principle. Building on this foundational result, Section~\ref{sec:RIC} is devoted to exploring rooftop envelopes, idempotency, and geodesic connectivity, where we prove Theorem~\ref{thm: geodesic connectivity}, and Theorem~\ref{thm: RE conj N}, among other results. Section~\ref{Sec: BV} addresses the boundary values of plurisubharmonic functions. Finally, Section~\ref{sect: Open Problems} discusses the core open problems related to the complex Monge-Ampère equation, aiming to foster a deeper understanding and stimulate further advancements in the field.

\begin{ackn}
The project PARAPLUI ANR-20-CE40-0019 and the Centre Henri Lebesgue ANR-11-LABX-0020-01 partially support this work. The project started when the fourth-named author was visiting Jagiellonian University and continued during his stay at Université d'Angers. He is grateful to both institutions for their support.
\end{ackn}

\section{Preliminaries}\label{Sec:Preliminaries}

In this section, we introduce necessary definitions and establish foundational facts essential for the remainder of this paper. For additional details and a broader context, readers are encouraged to consult sources such as \cite{ACCP09,Ceg98,Ceg04,Ceg08,Cz09,GZbook}.

Throughout this paper, we assume that $\Omega \Subset \mathbb{C}^n$ is a bounded \emph{hyperconvex} domain, i.e., there exists $\psi \in \PSHM{\Omega}$ such that $\{z \in \Omega : \psi(z) < c \} \Subset \Omega$ for all $c < 0$. For technical results related to which pseudoconvex domains are hyperconvex, see~\cite{BennyLisaHakan}. A fundamental component of this paper involves the Cegrell classes, defined as follows:
\begin{align*}
\Eo(\Omega) &= \left\{\varphi \in \mathcal{PSH^-} \cap L^{\infty}(\Omega) : \lim_{z \rightarrow \xi} \varphi(z) = 0, \ \forall \xi \in \partial \Omega, \int_{\Omega} (dd^c \varphi)^n < +\infty \right\}, \\
\F(\Omega) &= \left\{\varphi \in \mathcal{PSH^-}(\Omega) : \exists \ [\varphi_j], \varphi_j \in \Eo(\Omega), \varphi_j \searrow \varphi, \sup_j \int_{\Omega} (dd^c \varphi_j)^n < +\infty \right\}, \\
\E(\Omega) &= \left\{\varphi \in \mathcal{PSH^-}(\Omega) : \exists \varphi_U \in \F(\Omega) \text{ such that } \varphi_U = \varphi \text{ on } U, \forall \ U \Subset \Omega \right\}.
\end{align*}
Let $[\Omega_j]$ denote a \emph{fundamental sequence} in the sense that it is an increasing sequence of strictly pseudoconvex subsets of the bounded hyperconvex domain $\Omega$, such that $\Omega_j \Subset \Omega_{j+1}$ for every $j \in \mathbb{N}$, and $\bigcup_{j=1}^{\infty} \Omega_j = \Omega$. Then, if $u \in \E$, and $[\Omega_j]$ is a fundamental sequence, we define
\[
u^j = \sup \left\{ \varphi \in \PSHM{\Omega} : \varphi \leq u \text{ on } \bar{\Omega}_j \right\},
\]
and
\[
\tilde{u} = \left(\lim_{j \to +\infty} u^j\right)^*.
\]
The function $\tilde{u}$ is the \emph{smallest maximal plurisubharmonic majorant} of $u$. Alternatively, it can be described as follows. Since the rooftop (see
Definition~\ref{def:rooftop}) of two maximal plurisubharmonic functions is evidently maximal, the family of all maximal plurisubharmonic majorants has a unique minimal element, and it is precisely $\tilde u$.

Set
\[
\Ker = \left\{u \in \E : \tilde{u} = 0 \right\}.
\]
We say that functions in $\Ker$ have (generalized) \emph{boundary values} of $0$. A less widely recognized characterization of the class $\Ker$, as found in \cite{Lisa}, states:  For a function $u \in \E$, the following assertions are equivalent:
\begin{enumerate}
    \item $u \in \Ker$,
    \item there is a plurisubharmonic function $\varphi = \sum_{j=1}^{\infty} \varphi_j$, $\varphi_j \in \F$, such that $u\geq \varphi$ on $\Omega$.
\end{enumerate}

 To summarize some additional facts about these classes:

\medskip

\begin{itemize}\itemsep2mm
\item[$i)$] $\E$ is the largest class of negative plurisubharmonic functions where the complex Monge-Ampère operator is well-defined;

\item[$ii)$] $\log|z_2| \notin \E(\mathbb{B})$, $\log|z| \in \F(\mathbb{B})$ with $(dd^c \log|z|)^n = (2\pi)^n\delta_0$, ($\mathbb B$ is the unit ball in $\mathbb C^n$);

\item[$iii)$] $\Eo \subsetneq \F \subsetneq \Ker \subsetneq \E$;

\item[$iv)$] $\Eo, \F, \Ker, \E$ are convex cones;

\item[$v)$] For any $u\in\Ker$ we have
\[
\limsup_{\Omega\ni z \rightarrow \xi} u(z) = 0 \quad \text{ for all } \xi\in\partial\Omega,
\]
but there are functions in $\Ker$ that additionally satisfy:
\[
\liminf_{\Omega\ni z \rightarrow \xi} u(z) = -\infty \quad \text{ for all } \xi\in\partial\Omega;
\]
\item[$vi)$] $\F=\{u\in \Ker: \int_{\Omega} (dd^cu)^n<+\infty\}$.
\end{itemize}

\medskip

Next, we introduce the Cegrell classes with generalized boundary values. It is important to emphasize that in Definition~\ref{Prel: CC}, we assume $H \in \mathcal{PSH}^-(\Omega)$, rather than the more common assumption of $H \in \E$.

\begin{definition}\label{Prel: CC}
Let $\mathcal{K} \in \{\Eo, \F, \Ker, \E\}$. A plurisubharmonic function $u$ on $\Omega$ belongs to the class $\mathcal{K}(\Omega, H) (= \mathcal{K}(H))$, $H \in \PSHM{\Omega}$, if there exists a function $\varphi \in \mathcal{K}$ such that
\[
H \geq u \geq \varphi + H.
\]
\end{definition}
For any subset $\mathcal{K} \subseteq \mathcal{E}(\Omega)$, we introduce the notation
\[
\mathcal{K}^a = \{\varphi \in \mathcal{K} : (dd^c \varphi)^n \text{ vanishes on all pluripolar sets in } \Omega\}.
\]

We shall next discuss two aspects of the decomposition of Monge-Ampère measures and see that they coincide. First, let us recall the Cegrell-Lebesgue decomposition theorem: If $\mu$ is a non-negative Radon measure, then it can be decomposed into a regular (non-pluripolar)  and singular (pluripolar) part
\[
\mu=\mu_r^C+\mu_s^C,
\]
in such a way that
\[
\mu_r^C=f(dd^c\varphi)^n,
\]
where $\varphi\in \Eo$ and $f\geq 0$, $f\in L^1_{loc}((dd^c\varphi)^n)$, and $\mu_s^C$ is carried by a pluripolar subset of $\Omega$. Furthermore, if $\mu=(dd^cu)^n$, $u\in \E$, then in addition we know that $\mu_s^C$ is carried by $\{z\in \Omega: u(z)=-\infty\}$.

On the second decomposition, for $u \in \PSH{\Omega}$, and with the notation $u_t=\max(u,-t)$, for $t>s>0$ we obtain
\begin{flalign}
{\bf 1}_{\{u>-s\}} (dd^c u_s)^n &= {\bf 1}_{\{u_t>-s\}}   (dd^c \max(u_t,-s))^n\nonumber \\
& =  {\bf 1}_{\{u_t>-s\}}   (dd^c u_t)^n = {\bf 1}_{\{u>-s\}}   (dd^c u_t)^n \nonumber \\
& \leq {\bf 1}_{\{u>-t\}}   (dd^c u_t)^n, \label{eq: NP MA}
\end{flalign}
where in the first and third lines we have used
\[
\{u_t>-s\} = \{u>-s\} \subset \{u>-t\}, \; u_s = \max(u_t,-s),
\]
and in the second line, we have utilized the Bedford-Taylor maximum principle. An important consequence of \eqref{eq: NP MA} is that
the non-negative Radon measures ${\bf 1}_{\{u>-t\}}(dd^c u_t)^n$ increase with $t$, and hence we can define
\[
\mu_r(u)  = \lim_{t\to +\infty} {\bf 1}_{\{ u>-t\}}(dd^c u_t)^n.
\]
If $\mu_r$ is locally finite, then it defines a positive Radon measure in $\Omega$. From the second line of \eqref{eq: NP MA} we also have
\[
{\bf 1}_{\{u>-s\}} (dd^c u_s)^n  = {\bf 1}_{\{u>-s\}}   (dd^c u_t)^n = {\bf 1}_{\{u>-s\}} {\bf 1}_{\{u>-t\}}   (dd^c u_t)^n.
\]
Then, by letting $t\to +\infty$, we arrive at
\begin{equation}
	\label{eq: NP plurifine property}
	{\bf 1}_{\{u>-s\}} (dd^c u_s)^n  = {\bf 1}_{\{u>-s\}}  \mu_r(u), \; \forall s>0.
\end{equation}
It follows from \cite[Theorem 2.1]{BGZ09} that, for $u\in \E$, the non-pluripolar Monge-Ampère measure of $u$ coincides with its full Monge-Ampère measure outside the pluripolar locus:
\[
\mu_r(u)  ={\bf 1}_{\{u>-\infty\}} (dd^c u)^n,
\]
and it puts no mass on pluripolar sets. Thus, setting $\mu_s(u)= {\bf 1}_{\{u=-\infty\}} (dd^c u)^n$, we have
\[
(dd^c u)^n =\mu_s(u) + \mu_r(u)=\mu_s^C(u)+ \mu_r^C(u).
\]
The letter $r$ (respectively, $s$) stands for the regular (respectively, singular) part of the Monge-Ampère measure $(dd^c u)^n$.

We will need the following maximum principle on several occasions. The full mass version of Theorem~\ref{thm: maximum principle} can be found in~\cite{NP09}.

\begin{theorem}\label{thm: maximum principle}
	If $u, v \in \E$, then
	\[
	 \mu_r(\max(u,v))    \geq {\bf 1}_{\{u\geq v\}} \mu_r(u)  +  {\bf 1}_{\{u < v\}} \mu_r(v).
	\]
	If, in addition, $u\leq v$ then
	\[
	 {\bf 1}_{\{u = v\}} \mu_r(v)  \geq {\bf 1}_{\{u = v\}} \mu_r(u).
	\]
\end{theorem}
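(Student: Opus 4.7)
The plan is to reduce the statement to the classical Bedford--Taylor maximum principle for bounded plurisubharmonic functions via truncation at level $-t$, and then to pass from the Monge--Amp\`ere measures of the truncations to the non-pluripolar measures $\mu_r$ through the plurifine identity \eqref{eq: NP plurifine property}. The first inequality is the substantive content; the second will follow by a short algebraic manipulation.

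Fix $t>s>0$ and set $u_t=\max(u,-t)$, $v_t=\max(v,-t)$, so that $\max(u_t,v_t)=w_t$, where $w=\max(u,v)\in\E$. Applied to the bounded functions $u_t,v_t$, the Bedford--Taylor maximum principle yields
\[
(dd^c w_t)^n \;\geq\; {\bf 1}_{\{u_t\geq v_t\}}(dd^c u_t)^n + {\bf 1}_{\{u_t<v_t\}}(dd^c v_t)^n.
\]
Multiplying through by ${\bf 1}_{\{u>-s\}\cap\{v>-s\}}$ and using that $u_t=u$ and $v_t=v$ on this set for every $t\geq s$, one finds
\[
{\bf 1}_{\{u>-s,\,v>-s\}}(dd^c w_t)^n \;\geq\; {\bf 1}_{\{u>-s,\,v>-s,\,u\geq v\}}(dd^c u_t)^n + {\bf 1}_{\{u>-s,\,v>-s,\,u<v\}}(dd^c v_t)^n.
\]

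Since $\{u>-s\}\cap\{v>-s\}$ is contained in each of $\{u>-s\}$, $\{v>-s\}$, and $\{w>-s\}$, the identity \eqref{eq: NP plurifine property} (valid for every $t\geq s$) collapses the three Monge--Amp\`ere measures on the right and left to $\mu_r(u)$, $\mu_r(v)$, and $\mu_r(w)$ respectively on the set $\{u>-s,v>-s\}$. Letting $s\to+\infty$, the complement $\{u=-\infty\}\cup\{v=-\infty\}$ is pluripolar and so carries no mass for any of $\mu_r(u)$, $\mu_r(v)$, or $\mu_r(w)$; monotone convergence then delivers the first asserted inequality. For the second part, the assumption $u\leq v$ forces $w=v$ and $\{u\geq v\}=\{u=v\}$, so the first inequality becomes $\mu_r(v)\geq {\bf 1}_{\{u=v\}}\mu_r(u)+{\bf 1}_{\{u<v\}}\mu_r(v)$; subtracting ${\bf 1}_{\{u<v\}}\mu_r(v)$ from both sides yields the claim ${\bf 1}_{\{u=v\}}\mu_r(v)\geq{\bf 1}_{\{u=v\}}\mu_r(u)$.

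The principal technical hurdle is the bookkeeping at the pluripolar locus. One must check that, off the pluripolar set $\{u=-\infty\}\cup\{v=-\infty\}$, the cut-off indicators ${\bf 1}_{\{u>-s,\,v>-s,\,\cdot\}}$ converge pointwise to the corresponding ${\bf 1}_{\{\cdot\}}$ as $s\to+\infty$, and that none of $\mu_r(u)$, $\mu_r(v)$, $\mu_r(w)$ places mass on the excluded pluripolar set; otherwise the limiting process would leak mass and the claimed inequality could fail.
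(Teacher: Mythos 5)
Your proof is correct and follows essentially the same route as the paper's: truncate at level $-t$, apply the classical Bedford--Taylor maximum principle to the bounded functions, multiply by the cut-off ${\bf 1}_{\{u>-s,\,v>-s\}}$ (which is exactly the paper's ${\bf 1}_{\{\min(u,v)>-s\}}$), invoke the plurifine identity \eqref{eq: NP plurifine property} to replace the truncated Monge--Amp\`ere measures by $\mu_r$, and let $s\to+\infty$, using that $\mu_r(u)$, $\mu_r(v)$, $\mu_r(\max(u,v))$ put no mass on the pluripolar set $\{u=-\infty\}\cup\{v=-\infty\}$. Your explicit derivation of the second inequality from the first (restricting to $\{u=v\}$ / subtracting the locally finite piece ${\bf 1}_{\{u<v\}}\mu_r(v)$) is correct and matches the intended conclusion.
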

\begin{proof}
	With the standard notation $u_t= \max(u,-t)$, $v_t=\max(v,-t)$, $t>0$, we get
	\[
	(dd^c \max(u_t,v_t))^n   \geq {\bf 1}_{\{u_t\geq v_t\}}   (dd^c u_t)^n  +  {\bf 1}_{\{u_t< v_t\}}  (dd^c v_t)^n.
	\]
Multiplying with ${\bf 1}_{\{\min(u,v)>-s\}}$, $s<t$, and then using~\eqref{eq: NP plurifine property}, we obtain
	\begin{flalign*}
			{\bf 1}_{\{\min(u,v)>-s\}} \mu_r(\max(u,v))   & \geq {\bf 1}_{\{\min(u,v)>-s\}}  {\bf 1}_{\{u\geq v\}}   \mu_r(u)   \\
			&+ {\bf 1}_{\{\min(u,v)>-s\}}  {\bf 1}_{\{u< v\}}  \mu_r(v).
	\end{flalign*}
Letting $s\to +\infty$, we arrive at the conclusion.
\end{proof}

A central tool we shall use is an envelope construction.
\begin{definition}
    For a function $h : \Omega \rightarrow \mathbb{R} \cup \{-\infty\}$, which is bounded from above, we define the envelope $P(h)$ as the upper semicontinuous regularization of the function
\[
x \mapsto \sup \{ u(x) : u \in \PSH{\Omega}, u \leq h \text{ quasi-everywhere in } \Omega \},
\]
with the convention that $\sup\emptyset = -\infty$. If no plurisubharmonic function lies below $h$ quasi-everywhere, then we simply define $P(h)$ to be identically $-\infty$.
\end{definition}
Here, quasi-everywhere means outside a pluripolar set.
If there exists $u \in \PSH{\Omega}$ such that $u \leq h$ quasi-everywhere, then $P(h) \in \PSH{\Omega}$, and by Choquet's lemma, there exists an increasing sequence $[u_j] \subset \PSH{\Omega}$ such that $u_j \leq h$ quasi-everywhere in $\Omega$ and $(\lim_j u_j)^* = P(h)$. The set
\[
\{x \in \Omega : \lim_j u_j(x) < P(h)(x)\}
\]
is pluripolar. Since a countable union of pluripolar sets is pluripolar, we infer that $P(h) \leq h$ quasi-everywhere in $\Omega$.

We say that $h$ is quasi-continuous if, for each $\varepsilon > 0$, there exists an open set $U$ such that $\Capo(U, \Omega) < \varepsilon$, and the restriction of $h$ on $\Omega \setminus U$ is continuous. By analogy, a set $E$ is quasi-open if, for each $\varepsilon > 0$, there exists an open set $U$ such that
\[
\Capo(U \setminus E \cup E \setminus U) \leq \varepsilon.
\]
Recall that the Monge-Ampère capacity is defined as
\[
\Capo(E) = \sup \left\{ \int_E (dd^c u)^n : u \in \PSH{\Omega}, -1 \leq u \leq 0 \right\}.
\]
Following Xing~\cite{Xing96}, we say that a sequence $[u_j]$  \emph{converges in capacity} to $u$ if, for every $\varepsilon > 0$ and any compact set $K \Subset \Omega$,
\[
\lim_{j \to +\infty} \Capo(\{ |u_j - u| > \varepsilon \} \cap K) = 0.
\]
It follows that monotone convergence implies convergence in capacity. We will use the following fact.
\begin{lemma}\label{l2.4}
If $-C \leq u_j \leq 0$ are plurisubharmonic functions converging in capacity to $u \in \PSH{\Omega}$, and $E$ is a quasi-open set, then
\[
\liminf_{j \to +\infty} \int_E (dd^c u_j)^n \geq \int_E (dd^c u)^n.
\]
\end{lemma}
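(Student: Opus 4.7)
The plan is to approximate the quasi-open set $E$ in capacity by genuine open sets, control the discrepancy using the Chern-Levine-Nirenberg estimate, and then invoke the standard lower semicontinuity of the Monge-Amp\`ere mass on open sets under convergence in capacity.

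I will begin by recalling the Chern-Levine-Nirenberg-type inequality: for any plurisubharmonic function $v$ with $-C\le v\le 0$ and any Borel set $B\subset\Omega$,
\[
\int_B (dd^c v)^n \le C^n\,\Capo(B),
\]
obtained by testing $v/C$ against the normalized competitors in the definition of $\Capo$. Fixing a compact $K$ containing the capacity-defining data if needed, the same estimate is valid for quasi-Borel sets such as $E$ and the symmetric difference $E\triangle U$ with any open $U$.

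Given $\varepsilon>0$, the quasi-openness of $E$ provides an open set $U=U_\varepsilon\subset\Omega$ with $\Capo(E\triangle U)\le\varepsilon$. Writing
\[
\int_E (dd^c u_j)^n \ge \int_U (dd^c u_j)^n - \int_{U\setminus E}(dd^c u_j)^n,
\qquad
\int_U (dd^c u)^n \ge \int_E (dd^c u)^n - \int_{E\setminus U}(dd^c u)^n,
\]
and applying the capacity estimate above to the correction terms (which are each bounded by $C^n\varepsilon$), I reduce the problem to establishing lower semicontinuity on the open set $U$.

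The latter follows from Xing's theorem: for a uniformly bounded sequence of plurisubharmonic functions, convergence in capacity implies weak convergence of the Monge-Amp\`ere measures $(dd^c u_j)^n\to (dd^c u)^n$. Since $\mathbf{1}_U$ is lower semicontinuous, the portmanteau inequality gives
\[
\liminf_{j\to+\infty}\int_U (dd^c u_j)^n \ge \int_U (dd^c u)^n.
\]
Chaining the three inequalities yields
\[
\liminf_{j\to+\infty}\int_E (dd^c u_j)^n \;\ge\; \int_E (dd^c u)^n - 2C^n\varepsilon,
\]
and letting $\varepsilon\searrow 0$ concludes. The main (mild) obstacle is a bookkeeping issue rather than a deep point: one must verify that $\mathbf 1_E$ is measurable and integrable against the Monge-Amp\`ere measures in question; this is precisely what the Chern-Levine-Nirenberg estimate guarantees, since the only sets to worry about are those of small capacity, on which the masses of all the $u_j$ and of $u$ are uniformly controlled.
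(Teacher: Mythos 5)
Your proposal is correct and follows essentially the same route as the paper's (very terse) proof: weak convergence of $(dd^c u_j)^n$ to $(dd^c u)^n$ under convergence in capacity, combined with the uniform Chern--Levine--Nirenberg bound $\int_B (dd^c u_j)^n \le C^n\,\Capo(B)$ to pass from genuine open sets (via the portmanteau inequality) to the quasi-open set $E$. You have simply spelled out in full the two-sentence argument the paper compresses into ``the measures are uniformly dominated by $\Capo$, the result follows.''
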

\begin{proof}
We note that $(dd^c u_j)^n$ weakly converges to $(dd^c u)^n$. Since $E$ is quasi-open and the measures $(dd^c u_j)^n$ are uniformly dominated by $\Capo$, the result follows.
\end{proof}

\begin{theorem}\label{thm: lsc of NP}
    Assume $[u_j]$ is a sequence in $\E$ that converges in capacity to $u \in \E$. Then
   \[
    \liminf_{j \to +\infty} \mu_r(u_j) \geq \mu_r(u).
 \]
\end{theorem}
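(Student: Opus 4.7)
The plan is to reduce the claimed lower semicontinuity of $\mu_r$ to the bounded lower-semicontinuity statement in Lemma~\ref{l2.4}. Since lsc of a sequence of Radon measures on $\Omega$ is tested against relatively compact open subsets, it is enough to show that for every open $U \Subset \Omega$,
\[
\liminf_{j\to+\infty}\mu_r(u_j)(U) \;\geq\; \mu_r(u)(U).
\]
By definition, $\mu_r(u)(U) = \lim_{s\to+\infty}\int_U \mathbf{1}_{\{u>-s\}}(dd^c u_s)^n$, so by monotone convergence in $s$ it suffices to prove, for every fixed $s>0$, the inequality
\[
\liminf_{j\to+\infty}\mu_r(u_j)(U) \;\geq\; \int_U \mathbf{1}_{\{u>-s\}}(dd^c u_s)^n.
\]
Pick an auxiliary truncation level $s'>s$. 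From \eqref{eq: NP plurifine property} applied to $u_j$ we have $\mu_r(u_j)\geq \mathbf{1}_{\{u_j>-s'\}}(dd^c (u_j)_{s'})^n$, so the task reduces to controlling $\liminf_j \int_U \mathbf{1}_{\{u_j>-s'\}}(dd^c (u_j)_{s'})^n$ from below.

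The key observation matching the two truncation levels is that $\{u>-s\}\cap\{|u_j-u|<s'-s\}\subset \{u_j>-s'\}$, which is immediate from the triangle inequality. Set $V = U\cap\{u>-s\}$, which is open because $u$ is upper semicontinuous. Since $(u_j)_{s'}$ takes values in $[-s',0]$, the measures $(dd^c (u_j)_{s'})^n$ are uniformly dominated by $(s')^n\Capo$ on $\bar U$, hence the ``bad'' set $\{|u_j-u|\geq s'-s\}\cap \bar U$, whose capacity tends to $0$ by hypothesis, carries asymptotically zero mass. Therefore
\[
\liminf_{j\to+\infty}\int_U \mathbf{1}_{\{u_j>-s'\}}(dd^c (u_j)_{s'})^n \;\geq\; \liminf_{j\to+\infty}\int_V (dd^c (u_j)_{s'})^n.
\]

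To finish, note that $\max(\cdot,-s')$ is $1$-Lipschitz, so $(u_j)_{s'}\to u_{s'}$ in capacity and these functions are uniformly bounded in $[-s',0]$; Lemma~\ref{l2.4} applied with the quasi-open (in fact open) set $V$ yields $\liminf_j \int_V (dd^c (u_j)_{s'})^n \geq \int_V (dd^c u_{s'})^n$. The plurifine locality of the Monge-Amp\`ere operator for bounded plurisubharmonic functions gives $\mathbf{1}_{\{u>-s\}}(dd^c u_{s'})^n = \mathbf{1}_{\{u>-s\}}(dd^c u_s)^n$, so $\int_V(dd^c u_{s'})^n = \int_U \mathbf{1}_{\{u>-s\}}(dd^c u_s)^n$, which is exactly what we needed. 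Sending $s\to+\infty$ concludes the argument.

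The main obstacle I foresee is the mismatch between the sublevel sets of $u_j$ and of $u$: the non-pluripolar identity \eqref{eq: NP plurifine property} naturally produces indicator functions tied to the function itself, but Lemma~\ref{l2.4} requires a \emph{fixed} quasi-open set. Introducing the slightly larger truncation parameter $s'>s$ together with the capacity estimate on $\{|u_j-u|\geq s'-s\}$ is the device that bridges this gap; once that step is arranged correctly, the remaining ingredients (plurifine locality and Lemma~\ref{l2.4}) slot in cleanly.
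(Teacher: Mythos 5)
Your argument is correct and takes a genuinely different route from the paper. The paper fixes a smooth test function $\chi$, regularizes the indicators $\mathbf 1_{\{u_j>-C\}}$ by the quasi-continuous cutoffs $f_j^{C,\varepsilon}=\max(u_j+C,0)/(\max(u_j+C,0)+\varepsilon)$, and invokes Xing's convergence theorem to pass to the limit in $f_j^{C,\varepsilon}\chi(dd^c u_j^C)^n$, then sends $\varepsilon\to 0$ and $C\to\infty$. You instead test on a relatively compact open set $U$, use two truncation levels $s<s'$, and observe that $\{u>-s\}\cap\{|u_j-u|<s'-s\}\subset\{u_j>-s'\}$; the mismatch between sublevel sets of $u_j$ and $u$ is then absorbed into a bad set $\{|u_j-u|\ge s'-s\}$ whose $(dd^c(u_j)_{s'})^n$-mass tends to zero because it is dominated by $(s')^n\Capo$, and Lemma~\ref{l2.4} plus the plurifine locality identity $\mathbf 1_{\{u>-s\}}(dd^cu_{s'})^n=\mathbf 1_{\{u>-s\}}(dd^cu_s)^n$ finish the job. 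This bypasses Xing's theorem and the $\varepsilon$-regularization entirely, at the mild cost of the preliminary reduction to open sets. One slip worth flagging: you assert $V=U\cap\{u>-s\}$ is \emph{open} ``because $u$ is upper semicontinuous''; that is backwards -- upper semicontinuity makes $\{u<-s\}$ open, not $\{u>-s\}$. The set $V$ is only quasi-open (since $u$ is quasi-continuous), which is exactly the hypothesis Lemma~\ref{l2.4} requires, so the argument still closes; just delete the parenthetical ``in fact open.''
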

\begin{proof}
  Fix a smooth test function $\chi$ in $\Omega$. Fix $C > 0, \varepsilon > 0$ and consider
    \[
f_j^{C,\varepsilon} := \frac{\max(u_j + C, 0)}{\max(u_j + C, 0) + \varepsilon}, \ j \in \mathbb{N},
    \]
    and
    \[
u_{j}^{C} := \max(u_j, -C).
    \]
    Observe that for $C$ fixed, the functions $u_{j}^{C} \geq -C$ are uniformly bounded in $\Omega$ and converge in capacity to $u^{C} = \max(u, -C)$ as $j \to \infty$. Moreover, $f_{j}^{C,\varepsilon} = 0$ if $u_j \leq -C$. By the locality of the non-pluripolar product, we can write
    \[
f_j^{C,\varepsilon}  \chi (dd^c u_j^C)^n  = f_j^{C,\varepsilon}  \chi (dd^c u_j)^n.
    \]
For each $C, \varepsilon$ fixed, the functions $f_j^{C,\varepsilon}$ are quasi-continuous, uniformly bounded (with values in $[0,1]$), and converge in capacity to $f^{C,\varepsilon}$, where $f^{C,\varepsilon}$ is defined by
    \[
f^{C,\varepsilon} := \frac{\max(u + C, 0)}{\max(u + C, 0) + \varepsilon}.
    \]
With the information above, we can apply Xing's theorem \cite{Xing00} to get that
    \[
f_j^{C,\varepsilon}  \chi (dd^c u_j^C)^n  \rightarrow f^{C,\varepsilon}  \chi (dd^c u^C)^n \ \text{as}\ j \to \infty,
    \]
in the weak sense of measures in $\Omega$. In particular, since $0 \leq f^{C,\varepsilon} \leq 1$, we have that
    \begin{eqnarray*}
        \liminf_{j \to +\infty} \int_{\{u_j > -C\}} \chi \mu_r(u_j) &= &\liminf_{j \to +\infty} \int_{\{u_j > -C\}} \chi (dd^c u_j^C)^n\\ & \geq &  \liminf_{j \to \infty} \int_{\Omega} f_j^{C,\varepsilon}  \chi (dd^c u_j^C)^n\\
        & = & \int_{\Omega} f^{C,\varepsilon}  \chi (dd^c u^C)^n.
    \end{eqnarray*}
    Now, letting $\varepsilon \to 0$ and then $C \to +\infty$, by the definition of the non-pluripolar measure, we obtain
    \[
        \liminf_{j \to +\infty} \int_{\Omega} \chi \mu_r(u_j)
        \geq  \int_{\Omega}  \chi \mu_r(u).
    \]
\end{proof}
\begin{corollary}
    If $[u_j]$ is a sequence in $\E$ that increases almost everywhere to $u \in \PSHM{\Omega}$, then the following weak convergence holds:
    \[
    \mu_r(u_j) \to \mu_r(u), \; \mu_s(u_j) \to \mu_s(u).
    \]
\end{corollary}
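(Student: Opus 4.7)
The plan is to combine the lower semicontinuity of $\mu_r$ established in Theorem~\ref{thm: lsc of NP} with Cegrell's classical weak continuity of the complex Monge--Amp\`ere operator along increasing sequences in $\E$, and then to exploit the monotonicity $u_j\leq u$ to upgrade the one-sided bound into an equality for both parts separately.

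\textbf{Step 1 (two-sided setup).} Monotone convergence implies convergence in capacity, so Theorem~\ref{thm: lsc of NP} applies and yields
\[
\liminf_{j\to\infty}\mu_r(u_j)\;\geq\;\mu_r(u)
\]
in the weak sense of positive Radon measures. In parallel, Cegrell's increasing convergence theorem for $\E$ gives the weak convergence $(dd^cu_j)^n\to (dd^cu)^n$ of the total Monge--Amp\`ere measures. Decomposing $(dd^cu_j)^n=\mu_r(u_j)+\mu_s(u_j)$ and $(dd^cu)^n=\mu_r(u)+\mu_s(u)$ and testing against any non-negative $\chi\in C_c(\Omega)$, the total-mass balance together with the lsc bound immediately yields the companion inequality $\limsup_j\int\chi\,d\mu_s(u_j)\leq \int\chi\,d\mu_s(u)$.

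\textbf{Step 2 (matching upper bound on $\mu_r$).} To close the argument it suffices to establish $\limsup_j\int\chi\,d\mu_r(u_j)\leq \int\chi\,d\mu_r(u)$ for $\chi\geq 0$ in $C_c(\Omega)$. Since $u_j\leq u$, Theorem~\ref{thm: maximum principle} gives $\mathbf{1}_{\{u_j=u\}}\mu_r(u_j)\leq\mathbf{1}_{\{u_j=u\}}\mu_r(u)$, so the task reduces to showing that the remainder $\int\chi\,\mathbf{1}_{\{u_j<u\}}\,d\mu_r(u_j)$ vanishes in the limit. I would split this remainder at level $-C$; on $\{u_j>-C\}$ the plurifine locality $\mathbf{1}_{\{u_j>-C\}}\mu_r(u_j)=\mathbf{1}_{\{u_j>-C\}}(dd^cu_j^C)^n$ transfers the estimate to the bounded truncations $u_j^C\nearrow u^C$, where Bedford--Taylor's weak convergence together with the capacity-vanishing of $\{u_j<u-\delta\}$ (coming from $u_j\to u$ in capacity) forces the contribution to zero as $j\to\infty$. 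On $\{u_j\leq -C\}$ the tail vanishes as $C\to\infty$ because $\mu_r(u_j)$ does not charge the pluripolar set $\{u_j=-\infty\}$. Once this upper bound is established, both $\mu_r(u_j)\to\mu_r(u)$ and $\mu_s(u_j)\to\mu_s(u)$ weakly follow from Step 1.

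\textbf{Main obstacle.} The principal technical difficulty is the uniformity in $j$ of the tail estimate when interchanging the limits $j\to\infty$ and $C\to\infty$: a bare Chebyshev bound is insufficient because the total mass of $\mu_r(u_j)$ is not a priori uniformly controlled. I would overcome this by exploiting the pointwise domination $\mathbf{1}_{\{u_j>-C\}}\mu_r(u_j)\leq (dd^cu_j^C)^n$ together with the capacity bound $(dd^cu_j^C)^n\leq C^n\,\Capo$ on compacta (which holds because $-1\leq u_j^C/C\leq 0$), so that the tail analysis is transferred into the uniformly capacity-controlled regime of the bounded truncations, where the weak convergence $(dd^cu_j^C)^n\to(dd^cu^C)^n$ provides the needed equi-tightness.
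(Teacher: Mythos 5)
Your Step 1 is correct and is exactly half of the paper's argument: Theorem~\ref{thm: lsc of NP} gives $\liminf_j\mu_r(u_j)\geq\mu_r(u)$, Cegrell's convergence theorem gives $(dd^cu_j)^n\to(dd^cu)^n$, and combining these yields $\limsup_j\int\chi\,d\mu_s(u_j)\leq\int\chi\,d\mu_s(u)$. However, your Step 2 contains a genuine gap. You decompose $\mu_r(u_j)$ over the contact set $\{u_j=u\}$ and its complement, and you claim the contribution from $\{u_j<u\}$ vanishes by combining plurifine locality with the capacity-vanishing of $\{u_j<u-\delta\}$. This cannot work: the set $\{u_j<u\}$ generally does \emph{not} shrink. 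Take $u$ bounded and negative, and $u_j=(1-1/j)u$; then $u_j\nearrow u$, yet $\{u_j<u\}=\{u<0\}$ can be all of $\Omega$ for every $j$, and $\int\chi\,\mathbf{1}_{\{u_j<u\}}\,d\mu_r(u_j)=\int\chi\,(dd^cu_j)^n\to\int\chi\,(dd^cu)^n\neq0$. Controlling $\{u_j<u-\delta\}$ says nothing about the annular region $\{u-\delta\leq u_j<u\}$, which carries the bulk of the mass in this example. Your ``Main obstacle'' paragraph addresses the interchange of the $j\to\infty$ and $C\to\infty$ limits, but not this more basic issue.

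The missing ingredient is the monotonicity of the \emph{singular} part under the pointwise order: since $u_j\leq u$, one has $\mu_s(u_j)\geq\mu_s(u)$ for every $j$ (a consequence of the Cegrell-school comparison results in the spirit of \cite[Lemma~4.1]{ACCP09}). This immediately gives $\liminf_j\int\chi\,d\mu_s(u_j)\geq\int\chi\,d\mu_s(u)$, which is the inequality complementary to your Step~1. Together with the total mass balance $(dd^cu_j)^n\to(dd^cu)^n$ and the lower semicontinuity of $\mu_r$, this forces both $\mu_r(u_j)\to\mu_r(u)$ and $\mu_s(u_j)\to\mu_s(u)$ weakly, with no truncation analysis at all. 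This is precisely the paper's (one-line) argument. Your approach via Theorem~\ref{thm: maximum principle} and the contact set cannot replace this monotonicity of singular masses, because the contact set $\{u_j=u\}$ may be empty.
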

\begin{proof}
    We have $\mu_s(u_j) \geq \mu_s(u)$, for all $j$, and $(dd^c u_j)^n \to (dd^c u)^n$ by \cite{Ceg18}. The result thus follows from Theorem~\ref{thm: lsc of NP}.
\end{proof}

\begin{theorem}\label{thm: envelope}
	If $h$ is quasi-continuous in $\Omega$ and $P(h) \not \equiv -\infty$, then
	\[
	{\bf 1}_{\{P(h) < h\}}  \mu_r (P(h))   = 0.
	\]
\end{theorem}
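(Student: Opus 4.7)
The plan is to reduce the claim to a bounded setting via the truncation formula for $\mu_r$, to localize the set $\{P(h)<h\}$ using quasi-continuity onto a \emph{good set} where $P(h)$ and $h$ are continuous, and then to contradict the sup-envelope characterization of $P(h)$ through a local balayage argument.

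Setting $u:=P(h)$ and $u_t:=\max(u,-t)$, the identity $\mu_r(u)=\lim_{t\to\infty}\mathbf{1}_{\{u>-t\}}(dd^cu_t)^n$ reduces me, for each fixed $t>0$, to proving that $(dd^cu_t)^n$ assigns no mass to $\{u<h\}\cap\{u>-t\}$. Since $(dd^cu_t)^n\leq t^n\,\Capo$ as a measure, for any $\epsilon>0$ I pick an open $V_\epsilon$ with $\Capo(V_\epsilon)<\epsilon$ on whose complement both $u$ and $h$ are continuous, which absorbs $V_\epsilon$ into an error of size $O(t^n\epsilon)$. It thus suffices to estimate $(dd^cu_t)^n$ on the relatively open set $E_\epsilon:=\{u<h\}\cap\{u>-t\}\setminus V_\epsilon$, where continuity and compactness yield a $\delta>0$ with $h-u\geq\delta$ locally.

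The core step is a local balayage: on a small ball $B\Subset E_\epsilon$, replace $u$ inside $B$ by its Perron--Bremermann upper envelope $w$ with boundary data $u|_{\partial B}$, and extend by $u$ outside. The resulting $w\geq u$ is plurisubharmonic on $\Omega$ with $(dd^cw)^n=0$ on $B$, and shrinking $B$ against the moduli of continuity of $u$ and $h$ on $\Omega\setminus V_\epsilon$ forces $w\leq h$ on $B\setminus V_\epsilon$. Were $(dd^cu_t)^n$ to charge $\{u<h\}$, a suitable choice of $B$ would additionally ensure that $w>u$ on a non-pluripolar subset of $B$, hence (by the sup-envelope characterization $u=P(h)$) yield a contradiction provided $w\leq h$ quasi-everywhere.

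The main technical obstacle is precisely to promote the inequality $w\leq h$ from \emph{off $V_\epsilon$} to genuine \emph{quasi-everywhere} on $\Omega$, since $V_\epsilon$ is small in capacity but typically not pluripolar. I would handle this by subtracting a carefully chosen nonpositive plurisubharmonic correction --- for instance a multiple of a relative extremal function concentrated on $V_\epsilon$, or iterating with $\epsilon_j=2^{-j}$ and assembling by the upper semicontinuous regularization of a countable supremum so that the exceptional set collapses to $\bigcap_j\bigcup_{k\geq j}V_{\epsilon_k}$, which has zero capacity and is therefore pluripolar --- in such a way that the excess of $w$ over $h$ on $V_\epsilon$ is damped while the strict enlargement of $u$ at the non-pluripolar subset of $B$ carrying positive $(dd^cu_t)^n$-mass is preserved. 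Maintaining this balance between the damping correction and the preservation of the non-pluripolar excess is the principal technical burden of the proof.
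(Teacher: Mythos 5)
Your strategy --- reduce to the bounded truncation $u_t=\max(P(h),-t)$, excise a small-capacity open set $V_\epsilon$ where quasi-continuity fails, and run a local balayage/contradiction against the sup-envelope characterization of $P(h)$ --- is the right circle of ideas, but as written there are two genuine gaps that you have not closed.

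First, the local balayage itself is problematic. You take $B\Subset E_\epsilon$, but $E_\epsilon$ is only \emph{relatively} open in $\Omega\setminus V_\epsilon$, not open in $\Omega$, so the ball $B$ will generically meet $V_\epsilon$, and in particular $\partial B$ will meet $V_\epsilon$. The Perron--Bremermann modification $w$ glued to $u$ outside $B$ is plurisubharmonic only if the boundary matching $\limsup_{B\ni z\to\zeta}w(z)\le u(\zeta)$ holds for every $\zeta\in\partial B$; you have no control on $u$ (nor on $h$) at the points of $\partial B\cap V_\epsilon$, so upper semicontinuity of the glued function is not guaranteed and the argument ``shrink $B$ against the modulus of continuity of $u$, $h$'' does not apply across $V_\epsilon$. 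Second, and more centrally, the promotion of $w\le h$ from $B\setminus V_\epsilon$ to quasi-everywhere on $\Omega$ is exactly where the proposal stalls. The relative-extremal-function correction $w+Cu_{V_\epsilon}^*$ does damp $w$ on $V_\epsilon$, but $u_{V_\epsilon}^*$ is not uniformly small off $V_\epsilon$ (it is only small on average, in the sense $\int(dd^cu_{V_\epsilon}^*)^n=\Capo(V_\epsilon)$); near $\partial V_\epsilon$ it can be close to $-1$, so the correction may kill the strict excess $w>u$ on the non-pluripolar set you need. The iterate-over-$\epsilon_j$ idea collapses the exceptional set to a pluripolar one, but taking the regularized supremum of the corrected competitors does not preserve the inequality $\le h$ q.e.\ (sup of functions that are each $\le h$ off different sets need not be $\le h$ off their intersection after regularization). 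You flag this as the ``principal technical burden'', and indeed it is; as stated it is a gap, not a proof.

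The paper sidesteps all of this by approximating $h$ from above by a \emph{decreasing} sequence of lower semicontinuous functions $h_j\searrow h$ quasi-everywhere (this is the content of the argument in \cite[Theorem 2.7]{DDL23}). For lsc data the balayage is clean and classical: $\int_{\{P(h_j)<h_j\}}(dd^cP(h_j))^n=0$ with no exceptional set to excise. Then, using the monotonicity $\{P(h_k)<h\}\subset\{P(h_j)<h_j\}$ for $k<j$, the plurifine identity \eqref{eq: NP plurifine property} for the truncations, and the fact that $\{P(h_k)<h,\;P(h)>-t\}$ is quasi-open, the lower semicontinuity of $E\mapsto\int_E(dd^c\cdot)^n$ on quasi-open sets (Lemma~\ref{l2.4}) passes the vanishing through the limit $j\to+\infty$, and finally $k\to+\infty$, $t\to+\infty$. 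This replaces your single-$\epsilon$ excision plus local contradiction with a global monotone approximation plus a convergence lemma, and in particular avoids entirely the need to correct $w$ on the bad set. If you want to salvage your route, the decisive move is the same: first prove the statement for lsc (or continuous) $h$, where the balayage has no exceptional set, and only then approximate; the attempt to run balayage directly on the quasi-continuous $h$ is where the proposal breaks.
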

\begin{proof}
	Arguing as in the proof of Theorem~2.7 in \cite{DDL23}, we can find a sequence $[h_j]$ of lower semi-continuous functions in $\Omega$ such that $h_j \searrow h$ quasi-everywhere in $\Omega$. By the standard balayage method, for all $j$, we have
	\[
	\int_{\{P(h_j) < h_j\}} (dd^c P(h_j))^n =0.
	\]
	For $k<j$, using $\{P(h_k)<h\} \subset \{P(h_j)<h_j\}$, we then have
	\[
	\int_{\{P(h_k) < h\}} (dd^c P(h_j))^n =0.
	\]
	Fixing $t>0$, and using~\eqref{eq: NP plurifine property} for $\max(P(h_j),-t)$, we obtain
	\[
	\int_{\{P(h_k) < h, P(h)>-t\}} (dd^c \max(P(h_j),-t))^n =0.
	\]
The set $\{P(h_k) < h, P(h)>-t\}$ is quasi-open and the sequence $(\max(P(h_j),-t))_j$ is uniformly bounded, thus letting $j\to +\infty$, we obtain, by Lemma~\ref{l2.4},
	\[
	\int_{\{P(h_k) < h, P(h)>-t\}} (dd^c \max(P(h),-t))^n =0.
	\]
We finally let $k\to +\infty$, and then $t\to +\infty$ to arrive at the result.
\end{proof}

Having established the foundational aspects of plurisubharmonic functions and the Cegrell classes, we now focus on rooftop envelopes, a significant tool in contemporary pluripotential theory. We proceed to define this indispensable concept and present a proof of Corollary~\ref{cor: NP of rooftop}, which will play a significant role in Section~\ref{sec:RIC}.

\begin{definition}\label{def:rooftop}
The \emph{rooftop envelope} $P(u,v)$, for any two plurisubharmonic functions $u$ and $v$, is defined as the plurisubharmonic envelope of $\min(u,v)$, the largest plurisubharmonic function lying below $\min(u,v)$.
\end{definition}

\begin{corollary}\label{cor: NP of rooftop}
If $u,v\in \E$ then
    \[
    \mu_r(P(u,v))\leq {\bf 1}_{\{P(u,v)=u\}}\mu_r(u) + {\bf 1}_{\{P(u,v)=v, P(u,v)<u\}} \mu_r(v).
    \]
In particular, if $\mu$ is a positive measure such that $\mu_r(u)\leq \mu$ and $\mu_r(v)\leq \mu$, then $\mu_r(P(u,v))\leq \mu$.
\end{corollary}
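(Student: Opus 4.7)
Set $\varphi := P(u,v)$. My plan is to reduce everything to the two tools already established in this section: the envelope vanishing result of Theorem~\ref{thm: envelope} and the maximum principle of Theorem~\ref{thm: maximum principle}. The idea is that $\mu_r(\varphi)$ must vanish off the contact set $\{\varphi=\min(u,v)\}$, so it will suffice to control $\mu_r(\varphi)$ on each piece of that contact set by comparing $\varphi$ with $u$ or $v$ through the maximum principle.

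To apply Theorem~\ref{thm: envelope} with $h=\min(u,v)$, I first note that $h$ is quasi-continuous, being the pointwise minimum of the two quasi-continuous plurisubharmonic functions $u$ and $v$, and that $\varphi\not\equiv-\infty$: indeed $u+v\in\PSHM{\Omega}$ satisfies $u+v\le\min(u,v)$ (since $u,v\le 0$), so $\varphi\ge u+v$. Theorem~\ref{thm: envelope} then gives
\[
{\bf 1}_{\{\varphi<\min(u,v)\}}\mu_r(\varphi)=0,
\]
which means $\mu_r(\varphi)$ is supported on the contact set $\{\varphi=\min(u,v)\}$. I split this set as the disjoint union $\{\varphi=u\}\sqcup\{\varphi=v,\,\varphi<u\}$, the disjointness being clear because $\varphi<u$ excludes $\varphi=u$, and the splitting is precisely the asymmetric one appearing on the right-hand side of the desired inequality.

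On each of these two pieces I then compare $\varphi$ with the corresponding endpoint. Since $\varphi\le u$, the second part of Theorem~\ref{thm: maximum principle} yields
\[
{\bf 1}_{\{\varphi=u\}}\mu_r(\varphi)\le{\bf 1}_{\{\varphi=u\}}\mu_r(u),
\]
and likewise ${\bf 1}_{\{\varphi=v\}}\mu_r(\varphi)\le{\bf 1}_{\{\varphi=v\}}\mu_r(v)$ from $\varphi\le v$, which I then restrict to the piece $\{\varphi=v,\,\varphi<u\}$. Summing the two contributions over the disjoint decomposition from the previous step delivers the claimed inequality. The ``in particular'' statement is then immediate: if $\mu_r(u),\mu_r(v)\le\mu$, the two indicators have disjoint supports so their sum is bounded by $\mathbf{1}$, giving $\mu_r(P(u,v))\le\mu$.

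The argument is essentially a matching exercise between the envelope and the maximum principle, so I do not anticipate a genuine obstacle; the only subtle point is the necessity of breaking off the piece $\{\varphi=v,\,\varphi<u\}$ rather than writing ${\bf 1}_{\{\varphi=v\}}$, which is needed only to avoid double-counting on the overlap $\{\varphi=u=v\}$ where one otherwise would add the contribution of $\mu_r(u)$ and $\mu_r(v)$ twice.
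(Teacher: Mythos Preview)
Your proof is correct and follows essentially the same route as the paper's: both use Theorem~\ref{thm: envelope} to localize $\mu_r(P(u,v))$ on the contact set, split that set as $\{P(u,v)=u\}\cup\{P(u,v)=v,\,P(u,v)<u\}$, and then apply the second part of Theorem~\ref{thm: maximum principle} on each piece. Your write-up is slightly more explicit about why $\min(u,v)$ is quasi-continuous and why the two pieces are disjoint, but the argument is the same.
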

\begin{proof}
    We first note that $P(u,v)\not \equiv -\infty$ because $u+v\leq P(u,v)$. By Theorem~\ref{thm: envelope}, $\mu_r(P(u,v))$ is supported on the contact set $D=D_1\cup D_2$, where
    \[
    D_1=\{P(u,v)=u\}\; \text{and}\; D_2=\{P(u,v)=v\}\cap \{P(u,v)<u\}.
    \]
    Theorem \ref{thm: maximum principle} then yields
    \[
    {\bf 1}_{D_1}\mu_r(P(u,v))\leq {\bf 1}_{D_1} \mu_r(u), \;  {\bf 1}_{D_2}\mu_r(P(u,v))\leq {\bf 1}_{D_2} \mu_r(v),
    \]
which finish the proof.
\end{proof}

\begin{corollary}\label{cor: NP of P(u-v,0)}
    For all $u,v\in \PSHM{\Omega}$, we have
    \[
    \mu_r(P(u-v,0)) \leq \mu_r(u).
    \]
\end{corollary}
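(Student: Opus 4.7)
The plan is to set $\varphi := P(u-v, 0)$, interpreted as the plurisubharmonic envelope $P(h)$ of $h := \min(u-v, 0)$; the statement is trivial if $\varphi \equiv -\infty$, so assume otherwise. Then $\varphi \in \PSHM{\Omega}$ satisfies $\varphi \leq h$ quasi-everywhere, which yields both $\varphi + v \leq u$ throughout $\Omega$ (with the convention $-\infty + c = -\infty$) and $\varphi \leq 0$.

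First, I would apply Theorem~\ref{thm: envelope} to the quasi-continuous function $h$ (quasi-continuity of the difference of two plurisubharmonic functions being standard) to deduce that $\mu_r(\varphi)$ is carried, modulo a pluripolar set, by the contact set $\{\varphi = \min(u-v, 0)\}$. This set decomposes as $S_1 \cup S_2$ with
\[
S_1 = \{\varphi = u-v\} \cap \{u - v \leq 0\}, \qquad S_2 = \{\varphi = 0\} \cap \{u - v > 0\}.
\]
On $S_2$ the strong maximum principle for negative plurisubharmonic functions forces $\{\varphi = 0\}$ to be open with $\varphi \equiv 0$ there, hence $\mu_r(\varphi)$ vanishes on $S_2$. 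Thus $\mu_r(\varphi) = {\bf 1}_{S_1}\mu_r(\varphi)$, and on $S_1$ one has $\varphi + v = u$.

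Next, I would invoke the non-pluripolar counterpart of the classical inequality $(dd^c(f+g))^n \geq (dd^c f)^n$ for $f,g\in \PSH{\Omega}$ to obtain $\mu_r(\varphi + v) \geq \mu_r(\varphi)$; this is justified by truncation with $f_t := \max(f,-t)$, using locality of the non-pluripolar product on $\{\varphi > -t,\, v > -t\}$ and passing to the limit $t\to +\infty$. Applying the second part of Theorem~\ref{thm: maximum principle} to the inequality $\varphi + v \leq u$ on $\{\varphi + v = u\} \supseteq S_1$ then yields
\[
{\bf 1}_{S_1} \mu_r(u) \ \geq \ {\bf 1}_{S_1} \mu_r(\varphi + v) \ \geq \ {\bf 1}_{S_1} \mu_r(\varphi) \ = \ \mu_r(\varphi),
\]
so $\mu_r(\varphi) \leq \mu_r(u)$.

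I expect the main technical point to be the superadditivity $\mu_r(\varphi+v)\geq \mu_r(\varphi)$ for possibly unbounded $\varphi, v \in \PSHM{\Omega}$: it requires a careful truncation argument matching $\max(\varphi + v, -t)$ with the bounded truncates, although the underlying Bedford--Taylor inequality on the bounded level is standard. The contact-set analysis and the two applications of Theorem~\ref{thm: envelope} and Theorem~\ref{thm: maximum principle} are then direct.
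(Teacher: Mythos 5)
Your argument is essentially the paper's own proof: set $\varphi=P(u-v,0)$, use Theorem~\ref{thm: envelope} to localize $\mu_r(\varphi)$ to the contact set, and then apply the maximum principle (Theorem~\ref{thm: maximum principle}) together with the superadditivity of $\mu_r$ to the inequality $\varphi+v\leq u$, which is an equality where $\varphi=u-v$. You are in fact a bit more careful than the paper on two points that it leaves implicit: you explicitly split the contact set $\{\varphi=\min(u-v,0)\}$ into the pieces $S_1$ and $S_2$ and observe that $S_2=\{\varphi=0\}\cap\{u-v>0\}$ is harmless (on a connected $\Omega$, either $\varphi\equiv 0$, making the statement trivial, or $\{\varphi=0\}=\emptyset$), and you flag that the superadditivity $\mu_r(\varphi+v)\geq\mu_r(\varphi)$ for unbounded $\varphi,v$ rests on a truncation-and-locality argument, which the paper invokes silently under the umbrella of Theorem~\ref{thm: maximum principle}.
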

\begin{proof}
    Since $P(u-v,0)+v\leq u$ with equality on the contact set $D=\{P(u-v,0)=u-v\}$, we have, by the maximum principle, Theorem \ref{thm: maximum principle},
    \[
    {\bf 1}_D \mu_r(P(u-v,0)) + {\bf 1}_D\mu_r(v) \leq {\bf 1}_D \mu_r(u).
    \]
    By Theorem \ref{thm: envelope},  $\mu_r(P(u-v,0))$ is supported on $D$, and this finishes the proof.
\end{proof}

\section{Uniqueness in the Cegrell Classes} \label{sect: uniqueness}

Building on the foundational concepts introduced in Section~\ref{Sec:Preliminaries}, this section is dedicated to proving Theorem~\ref{thm: uniqueness in N}. This theorem establishes a powerful comparison principle for functions in $\Ker(H)$, offering a significant tool for exploring uniqueness in the Cegrell classes. Prior to presenting the proof, we need some preliminary results.

\begin{definition}\label{def: singular}
  Given plurisubharmonic functions $u,v$ in $\Omega$, we say that $u$ \emph{is more singular than} $v$ if, for any compact set $K \Subset \Omega$, there exists a constant $C_K$ such that $u \leq v + C_K$ on $K$. If $u$ is more singular than $v$, then we denote this by $u \preceq v$. We say that $u$ and $v$ have the \emph{same singularities} if $u \preceq v$ and $v \preceq u$, denoted $u \simeq v$.
\end{definition}

The following result follows directly from \cite[Lemma 4.1]{ACCP09}:

\begin{lemma}\label{lem: ACCP09 lemma 4.1}
    If $u,v \in \E$ and $u \simeq v$, then $\mu_s(u) = \mu_s(v)$.
\end{lemma}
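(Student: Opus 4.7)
The plan is short since the lemma is presented as a direct consequence of a cited result, so most of the work is unpacking definitions. First I would observe that the hypothesis $u\simeq v$ forces the infinity loci of the two functions to agree: on any compact $K\Subset\Omega$ one has $u\leq v+C_K$ and $v\leq u+C_K$, so $\{u=-\infty\}=\{v=-\infty\}$. Call this common pluripolar set $E$. By the decomposition recorded in Section~\ref{Sec:Preliminaries}, $\mu_s(u)=\mathbf{1}_E(dd^cu)^n$ and $\mu_s(v)=\mathbf{1}_E(dd^cv)^n$, so both singular measures are already concentrated on the same set $E$, and the problem reduces to showing that the restrictions of the full Monge-Amp\`ere measures to $E$ agree.

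To obtain this last equality I would invoke \cite[Lemma~4.1]{ACCP09}, which is exactly a mass-comparison statement for two functions in $\E$ having the same singularities. One would apply it to the pair $(u,v)$ and then symmetrically to $(v,u)$, testing against cutoff functions in $\Eo$ supported in arbitrarily small plurifine neighbourhoods of $E$. Each application yields one of the two inequalities $\mathbf{1}_E(dd^cu)^n\leq\mathbf{1}_E(dd^cv)^n$ and $\mathbf{1}_E(dd^cv)^n\leq\mathbf{1}_E(dd^cu)^n$, and combining them delivers $\mu_s(u)=\mu_s(v)$.

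The only genuinely delicate point is ensuring that the cutoff argument isolates the pluripolar contribution. This relies on the fact, recalled just before Theorem~\ref{thm: maximum principle}, that $\mu_r(u)$ and $\mu_r(v)$ charge no pluripolar set; hence integrating any test function against $(dd^cu)^n$ or $(dd^cv)^n$ over $E$ sees only the singular parts, and the cited lemma closes the comparison. If instead one wishes to avoid cutoffs, a quicker alternative is to reduce to the case $u\leq v$ by replacing $v$ with $\max(u,v)$ (which still satisfies $\max(u,v)\simeq v$ and preserves the singular part by \cite[Lemma~4.1]{ACCP09}), and then apply the lemma in its asymmetric form once. Either route terminates the proof immediately after the definitional reduction above.
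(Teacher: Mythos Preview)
Your proposal is correct and matches the paper's approach, which consists of nothing more than the sentence ``follows directly from \cite[Lemma~4.1]{ACCP09}.'' The cited lemma already gives the monotonicity $\mu_s(v)\leq\mu_s(u)$ whenever $u\preceq v$ (as you can see from its use in the proof of Lemma~\ref{prop : P(u-v)}), so one simply applies it in both directions of $u\simeq v$; your intermediate discussion of cutoff functions in $\Eo$ and plurifine neighbourhoods is unnecessary and somewhat speculative, but harmless.
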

The converse of it is not true, even assuming $u \preceq v$. It turns out however that the difference $u-v$ is not very singular:

\begin{lemma}\label{prop: P(u-v) is in Ea}
    Assume $u,v \in \E$, $u \preceq v$, and
    \[
    {\bf 1}_{\{u = -\infty\}}(dd^c u)^n = {\bf 1}_{\{v = -\infty\}}(dd^c v)^n.
    \]
    Then $(dd^c P(u-v,0))^n \leq \mu_r(u)$. In particular, $P(u-v,0) \in \E^a$.
\end{lemma}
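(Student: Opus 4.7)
Set $w := P(u-v, 0)$. The plan decomposes $(dd^c w)^n = \mu_r(w) + \mu_s(w)$: Corollary~\ref{cor: NP of P(u-v,0)} controls the non-pluripolar part, and the hypothesis $\mu_s(u) = \mu_s(v)$ should force the pluripolar part to vanish.

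First I would verify that $u \le w \le 0$, $w \in \E$, and $w + v \le u$ on $\Omega$. Since $v \le 0$ gives $u \le u-v$ on $\{v > -\infty\}$, and $u \le 0$, we have $u \le \min(u-v, 0)$ quasi-everywhere. Being plurisubharmonic, $u$ competes in the envelope defining $w$, hence $u \le w \le 0$; in particular $w \not\equiv -\infty$ and $w \in \E$ as it majorizes $u \in \E$. From $w \le u-v$ qe on $\{v > -\infty\}$ and the plurisubharmonicity of $w+v$, the inequality $w+v \le u$ extends to all of $\Omega$. Corollary~\ref{cor: NP of P(u-v,0)} then yields $\mu_r(w) \le \mu_r(u)$.

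The main task is to show $\mu_s(w) = 0$, which will give $(dd^c w)^n = \mu_r(w) \le \mu_r(u)$ and $w \in \E^a$ simultaneously. The key observation is that $\{w=-\infty\} \subseteq \{u=-\infty\}$ (from $u \le w$), and on this set $u = w+v = -\infty$, so $\{w=-\infty\}$ is contained in the contact set $\{w+v = u\}$. Plurifine locality of the Monge--Amp\`ere operator gives $(dd^c u)^n = (dd^c(w+v))^n$ on this contact set, and the binomial expansion
\[
(dd^c(w+v))^n = \sum_{k=0}^n \binom{n}{k}(dd^c w)^k \wedge (dd^c v)^{n-k} \;\ge\; (dd^c w)^n + (dd^c v)^n
\]
yields $(dd^c u)^n \ge (dd^c w)^n + (dd^c v)^n$ on $\{w=-\infty\}$. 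Since $u \preceq v$ forces $\{v=-\infty\} \subseteq \{u=-\infty\}$ and the hypothesis places $\mu_s(u) = \mu_s(v)$ on $A := \{v=-\infty\}$, writing $B := \{u=-\infty, v>-\infty\}$ one checks $(dd^c u)^n|_B = (dd^c v)^n|_B = 0$ (their regular parts vanish on the pluripolar set $B$, and their singular parts sit in $A$, disjoint from $B$). Consequently both $(dd^c u)^n|_{\{w=-\infty\}}$ and $(dd^c v)^n|_{\{w=-\infty\}}$ equal $\mu_s(u)|_{\{w=-\infty\}\cap A}$, so integrating the inequality over $\{w=-\infty\}$ gives $\int_{\{w=-\infty\}}(dd^c w)^n \le 0$, forcing $\mu_s(w) = 0$.

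The main obstacle is the rigorous justification of the plurifine locality identity $(dd^c u)^n = (dd^c(w+v))^n$ on $\{w+v = u\}$ in the Cegrell-class setting, particularly on the pluripolar portion where all three functions take the value $-\infty$. The natural route is to pass through bounded truncations $u_j := \max(u,-j)$, $v_j := \max(v,-j)$, $w_j := P(u_j - v_j, 0)$, where the corresponding identities are classical via Bedford--Taylor locality combined with Corollary~\ref{cor: NP of P(u-v,0)} applied to the bounded data, and then taking the limit $j \to +\infty$ with the help of Theorem~\ref{thm: lsc of NP} and the weak convergence of $(dd^c u_j)^n$ and $(dd^c v_j)^n$.
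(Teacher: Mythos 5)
Your reduction to showing $\mu_s(w)=0$, with $\mu_r(w)\leq\mu_r(u)$ handled by Corollary~\ref{cor: NP of P(u-v,0)}, is sound, but the step you flag as the main obstacle is a genuine gap rather than a technicality. The identity $(dd^c u)^n = (dd^c(w+v))^n$ on the contact set $\{w+v=u\}$ is not a consequence of plurifine locality: locality applies on plurifine \emph{open} sets, the contact set is plurifine closed, and the portion you actually need, $\{w=-\infty\}$, is pluripolar and hence has empty plurifine interior, so no form of locality gives any control there. Theorem~\ref{thm: envelope} controls only $\mu_r$ of the envelope, and Theorem~\ref{thm: maximum principle} likewise compares only $\mu_r$; neither says anything about singular parts. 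Worse, the one available comparison for pluripolar masses of ordered $\E$-functions, \cite[Lemma~4.1]{ACCP09}, points the wrong way: $w+v\leq u$ yields $\mu_s(w+v)\geq\mu_s(u)$, which does not help your chain. Your proposed repair through bounded truncations $u_j=\max(u,-j)$, etc., cannot recover this: the truncated functions have no singular part at all, so the truncated inequality restricted to $\{w_j=-\infty\}=\emptyset$ is vacuous, and weak limits of measures do not restrict meaningfully to a fixed pluripolar set.

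The paper avoids singular Monge--Amp\`ere masses entirely by a different truncation. Setting $\varphi_t=\max(u,v-t)$ keeps $\varphi_t\simeq v$, so $\mu_s(\varphi_t)=\mu_s(v)=\mu_s(u)$ by Lemma~\ref{lem: ACCP09 lemma 4.1}, while $\varphi_t-v\geq-t$ makes $w_t:=P(\varphi_t-v,0)$ bounded. On the contact set $D_t=\{w_t+v=\varphi_t\}$, Theorems~\ref{thm: envelope} and~\ref{thm: maximum principle} give $(dd^c w_t)^n={\bf 1}_{D_t}(dd^c w_t)^n\leq{\bf 1}_{D_t}\mu_r(\varphi_t)\leq\mu_r(\varphi_t)$. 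Since $\varphi_t\searrow u$, $w_t\searrow w$, and the constant singular contribution $\mu_s(\varphi_t)=\mu_s(u)$ can be subtracted from the weak limit of $(dd^c\varphi_t)^n$, one obtains $(dd^c w)^n\leq\mu_r(u)$; as $\mu_r(u)$ carries no pluripolar mass, this forces $\mu_s(w)=0$ \emph{a posteriori}. The key structural point you missed is to truncate so that the envelope is always applied to data bounded below, which sidesteps the pluripolar comparison your argument depends on.
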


\begin{proof}
For $t > 0$, define $\varphi_t = \max(u, v - t)$. Then, $\varphi_t \in \E$ and $\varphi_t \simeq v$, hence by Lemma~\ref{lem: ACCP09 lemma 4.1},
    \[
    {\bf 1}_{\{\varphi_t = -\infty\}}(dd^c \varphi_t)^n = {\bf 1}_{\{v = -\infty\}}(dd^c v)^n = {\bf 1}_{\{u = -\infty\}}(dd^c u)^n.
    \]
As $\varphi_t \searrow u$ when $t \to +\infty$, \cite{Ceg04} implies that $(dd^c \varphi_t)^n$ weakly converges to $(dd^c u)^n$. From this weak convergence,
    \[
    (dd^c \varphi_t)^n = \mu_r(\varphi_t) + {\bf 1}_{\{u = -\infty\}}(dd^c u)^n \rightarrow \mu_r(u) + {\bf 1}_{\{u = -\infty\}}(dd^c u)^n,
    \]
it follows that $\mu_r(\varphi_t)$ weakly converges to $\mu_r(u)$.

    Next, define $w_t = P(\varphi_t - v, 0)$. Then $w_t \in \E \cap L^{\infty}(\Omega)$, and $w_t + v \leq \varphi_t$, with equality on the contact set $D_t = \{w_t + v = \varphi_t\}$. By Theorem~\ref{thm: maximum principle} and Theorem~\ref{thm: envelope},
    \[
    (dd^c w_t)^n = {\bf 1}_{D_t}(dd^c w_t)^n \leq {\bf 1}_{D_t} \mu_r(w_t + v) \leq {\bf 1}_{D_t} \mu_r(\varphi_t) \leq \mu_r(\varphi_t).
    \]
Since $w_t \searrow w = P(u-v, 0) \in \E$, it follows from \cite{Ceg04} and the weak convergence $(dd^c w_t)^n \to \mu_r(u)$ that $(dd^c w)^n \leq \mu_r(u)$, hence $w \in \E^a$.
\end{proof}

Utilizing a similar approach, we arrive at the subsequent result:

\begin{lemma}\label{prop : P(u-v)}
Assume $u, v \in \E$, $u \preceq v$, and $(dd^c u)^n \leq (dd^c v)^n$. Then
    \[
    (dd^c P(u-v,0))^n = 0.
    \]
\end{lemma}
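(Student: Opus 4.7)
The plan is to build on Lemma~\ref{prop: P(u-v) is in Ea}: I first upgrade the hypothesis to the equality of singular Monge--Amp\`ere masses $\mu_s(u)=\mu_s(v)$, then use the envelope property of $w:=P(u-v,0)$ together with the inequality $w+v\le u$ and the maximum principle to squeeze $\mu_r(w)$ down to zero.

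To establish $\mu_s(u)=\mu_s(v)$, set $\varphi_t:=\max(u,v-t)$ as in the preceding proof. Since $u\preceq v$, one has $\varphi_t\simeq v$, so Lemma~\ref{lem: ACCP09 lemma 4.1} yields $\mu_s(\varphi_t)=\mu_s(v)$ for every $t>0$; meanwhile $\varphi_t\searrow u$, so $(dd^c\varphi_t)^n\to (dd^c u)^n$ weakly by Cegrell's continuity theorem. The positive measures $\mu_r(\varphi_t)=(dd^c\varphi_t)^n-\mu_s(v)$ therefore converge weakly to the signed measure $\mu_r(u)+\mu_s(u)-\mu_s(v)$, which must then be a positive measure. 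Restricting to a pluripolar carrier of $\mu_s(v)-\mu_s(u)$, on which $\mu_r(u)$ has no mass, gives $\mu_s(v)\le \mu_s(u)$, and combined with the opposite inequality coming from $(dd^cu)^n\le (dd^cv)^n$ this forces equality.

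With $\mu_s(u)=\mu_s(v)$ in hand, Lemma~\ref{prop: P(u-v) is in Ea} delivers $w\in\E^a$, so $(dd^cw)^n=\mu_r(w)$. The inequality $w\le u-v$ rewrites as $w+v\le u$, and Theorem~\ref{thm: envelope} places the support of $\mu_r(w)$ on the contact set $D:=\{w=u-v\}=\{w+v=u\}$ (the other piece $\{w=0\}$ carries no non-pluripolar mass by plurifine locality). Since $\E$ is a convex cone, $w+v\in\E$, and the second part of Theorem~\ref{thm: maximum principle} gives $\mathbf{1}_D\mu_r(u)\ge \mathbf{1}_D\mu_r(w+v)$. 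Combining this with the superadditivity $\mu_r(w+v)\ge \mu_r(w)+\mu_r(v)$ of non-pluripolar Monge--Amp\`ere masses, one obtains
\[
\mathbf{1}_D\mu_r(w)+\mathbf{1}_D\mu_r(v)\le \mathbf{1}_D\mu_r(u).
\]
The hypothesis forces $\mu_r(u)\le \mu_r(v)$, whence $\mathbf{1}_D\mu_r(w)\le 0$; since $\mu_r(w)$ is itself carried by $D$, this gives $(dd^cw)^n=\mu_r(w)=0$.

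The main technical point to be careful about is the superadditivity $\mu_r(w+v)\ge \mu_r(w)+\mu_r(v)$ in $\E$ when neither function is bounded. The route is the standard one: truncate $w$ to $w^{(s)}:=\max(w,-s)$, apply the wedge expansion for bounded plurisubharmonic functions to obtain $(dd^c(w^{(s)}+v))^n\ge (dd^cw^{(s)})^n+(dd^cv)^n$, and pass to the weak limit via Cegrell's convergence theorem; the property $w\in\E^a$, already in hand from the previous step, is what guarantees that the non-pluripolar parts align correctly in the limit.
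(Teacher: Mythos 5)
Your proof is correct and follows essentially the same structure as the paper's: first upgrade the hypotheses to $\mu_s(u)=\mu_s(v)$, then invoke Lemma~\ref{prop: P(u-v) is in Ea} to get $w=P(u-v,0)\in\E^a$, and finally squeeze $\mu_r(w)$ to zero on the contact set $D=\{w+v=u\}$ using Theorem~\ref{thm: envelope}, Theorem~\ref{thm: maximum principle}, and the superadditivity of $\mu_r$.

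The one genuine variation is in the first step. The paper simply invokes the inequality form of \cite[Lemma 4.1]{ACCP09} (i.e.\ $u\preceq v$ forces $\mu_s(u)\ge\mu_s(v)$), whereas you re-derive this from scratch by running $\varphi_t=\max(u,v-t)$ through Cegrell's continuity theorem, passing to the weak limit of $\mu_r(\varphi_t)$, and then restricting the resulting signed inequality to a common pluripolar carrier. Both routes are valid; the paper's is a one-liner, while yours avoids quoting a stronger form of Lemma 4.1 than is restated in the preliminaries, at the cost of a more delicate weak-limit argument. Two small imprecisions worth flagging: your dismissal of the contact piece $\{w=0\}$ is more cleanly handled by the second part of Theorem~\ref{thm: maximum principle} (applied to $w\le 0$) than by a ``plurifine locality'' appeal, since $\{w=0\}$ need not be plurifinely open; and in your superadditivity sketch, $v$ is not bounded, so what you actually use is the wedge expansion for a bounded function $w^{(s)}$ against $v\in\E$ followed by plurifine localisation to $\{w+v>-t\}$, where $\mu_r$ of each summand agrees with the truncated Monge--Amp\`ere measure.
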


\begin{proof}
Given $(dd^c u)^n \leq (dd^c v)^n$ and $u \preceq v$, \cite[Lemma 4.1]{ACCP09} implies
\[
 {\bf 1}_{\{v=-\infty\}}(dd^c v)^n \leq {\bf 1}_{\{u=-\infty\}}(dd^c u)^n \leq  {\bf 1}_{\{u=-\infty\}}(dd^c v)^n = {\bf 1}_{\{v=-\infty\}}(dd^c v)^n.
\]
The last equality above follows from the fact that $\mu_r(v)$ puts no mass on the pluripolar set $\{u=-\infty\}$ and $ {\bf 1}_{\{v=-\infty\}}  {\bf 1}_{\{u=-\infty\}}= {\bf 1}_{\{v=-\infty\}}$, since $u\preceq v$. We thus have
\[
{\bf 1}_{\{u=-\infty\}}(dd^c u)^n = {\bf 1}_{\{v=-\infty\}}(dd^c v)^n.
\]
Setting $w = P(u-v,0)$ and defining $D = \{w + v = u\}$, Lemma~\ref{prop: P(u-v) is in Ea} ensures $w \in \E^a$. Since $w+v\leq u$ with equality on $D$, Theorem \ref{thm: maximum principle} yields
	\[
	{\bf 1}_D\mu_r(w) + {\bf 1}_D \mu_r(v) \leq {\bf 1}_D \mu_r(u)  \leq {\bf 1}_D \mu_r(v).
	\]
Hence, ${\bf 1}_D \mu_r(w) = 0$. Furthermore, Theorem~\ref{thm: envelope} gives that $\mu_r(w)$ is supported on $D$, leading to $\mu_r(w) = 0$, which completes the proof.
\end{proof}

By synthesizing the discussions above, we establish the main result of this section, underscoring uniqueness in $\Ker$:

\begin{theorem}\label{thm: uniqueness in N}
    Assume $H \in \E$, $u, v \in \mathcal{N}(H)$, and $u \preceq v$.
    \begin{enumerate}
        \item If $(dd^c u)^n \leq (dd^c v)^n$, then $u \geq v$.
        \item If $(dd^c u)^n = (dd^c v)^n$, then $u = v$.
    \end{enumerate}
\end{theorem}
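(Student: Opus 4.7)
The plan is to reduce the statement to the classical comparison principle in $\F$, applied to the auxiliary envelope $w := P(u-v,0)$. Under the hypotheses of part~(1), the two preparatory lemmas immediately take care of the Monge-Amp\`ere side of this comparison: Lemma~\ref{prop: P(u-v) is in Ea} yields $w \in \E^a$ and Lemma~\ref{prop : P(u-v)} yields $(dd^c w)^n = 0$. So the Monge-Amp\`ere mass of $w$ already matches that of the constant $0$; what remains is to ensure $w$ has the right boundary behavior to be a legitimate input to a comparison principle.

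The heart of the argument, where the common boundary datum $H$ is exploited, is to upgrade $w \in \E^a$ to $w \in \F$. By the definition of $\Ker(H)$ there exists $\varphi \in \Ker$ with $u \geq \varphi + H$, while the same definition forces $v \leq H$, so $u - v \geq \varphi$. Since $\varphi \leq 0$ is plurisubharmonic and lies below $\min(u-v,0)$, it is admissible in the defining family of $P(u-v,0)$, giving $\varphi \leq w$. The operation $u \mapsto \tilde u$ is monotone and takes values in $\PSHM{\Omega}$, so $0 = \tilde\varphi \leq \tilde w \leq 0$, i.e., $w \in \Ker$. Combined with $\int_\Omega (dd^c w)^n = 0$ and characterization~(vi) from the preliminaries, this places $w \in \F$.

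The classical comparison principle in $\F$, applied to $w$ and $0 \in \Eo \subset \F$ (both carrying zero Monge-Amp\`ere measure), then forces $w \geq 0$, and combined with $w \leq 0$ we conclude $w \equiv 0$. Since $P(u-v,0) = 0$, the constant $0$ is admissible as a plurisubharmonic subsolution, so $\min(u-v,0) \geq 0$ quasi-everywhere; as pluripolar sets have Lebesgue measure zero, the sub-mean property of plurisubharmonic functions then upgrades this to $u \geq v$ everywhere, which is part~(1). For part~(2), apply~(1) to obtain $u \geq v$; this gives $v \leq u$, hence $v \preceq u$ with constant zero, and applying~(1) once more with the roles of $u$ and $v$ swapped (using $(dd^c v)^n \leq (dd^c u)^n$) yields $v \geq u$, whence $u = v$. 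The decisive step is the second paragraph: the hypothesis $u,v \in \Ker(H)$, as opposed to merely $u,v \in \E$, is indispensable precisely because the cancellation $u - v \geq \varphi \in \Ker$ is what transfers the zero boundary values from $\varphi$ to $w$, landing $w$ in $\F$ where Cegrell's comparison principle is available.
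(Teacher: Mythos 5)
Your argument is correct and follows essentially the same route as the paper: set $w = P(u-v,0)$, use the boundary datum $\varphi \in \Ker$ with $\varphi + H \leq u \leq H$ to get $\varphi \leq u-v$ and hence $w \in \Ker$, invoke Lemma~\ref{prop : P(u-v)} to get $(dd^c w)^n = 0$, and conclude $w = 0$ from a known uniqueness result. The only cosmetic difference is the final step: you route through characterization~(vi) to place $w \in \F$ and appeal to Cegrell's comparison principle there, whereas the paper applies \cite[Theorem~3.6]{ACCP09} directly to $w \in \Ker^a$ (its integrability hypothesis being trivial since $(dd^c w)^n = 0$); both are equivalent here. One small imprecision: you cite Lemma~\ref{prop: P(u-v) is in Ea} as if its hypothesis $\mathbf{1}_{\{u=-\infty\}}(dd^c u)^n = \mathbf{1}_{\{v=-\infty\}}(dd^c v)^n$ were given, but this is actually derived from $u \preceq v$ and $(dd^c u)^n \leq (dd^c v)^n$ inside the proof of Lemma~\ref{prop : P(u-v)}; in any case the latter lemma alone already gives you $w \in \E^a$.
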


\begin{proof}
Let $\varphi \in \mathcal{N}$ such that $\varphi + H \leq u \leq H$. Then, $u - v \geq \varphi$, implying $P(u - v, 0) \in \mathcal{N}$. Moreover, Lemma~\ref{prop : P(u-v)} ensures that $(dd^c P(u-v,0))^n = 0$. This, together with \cite[Theorem 3.6]{ACCP09}, leads to $P(u-v,0) = 0$, thereby establishing $u \geq v$.

The second statement directly follows from the first. If $(dd^c u)^n = (dd^c v)^n$, then $u \geq v$, implying $u \simeq v$. Reversing the roles of $u$ and $v$ yields $v \geq u$, thus concluding $u = v$.
\end{proof}

Theorem~\ref{thm: uniqueness in N} was previously known under the condition $\int_{\Omega} (-w) (dd^c u)^n < +\infty$ for some $w \in \E_0$, $w < 0$; see \cite[Theorem 3.6]{ACCP09}. As illustrated in \cite[Example 5.3]{Ceg08}, there exists a function $u \in \Ker \cap L^{\infty}$ for which $\int_{\Omega} (-w)(dd^c u)^n = +\infty$ for all $w \in \PSHM{\Omega}$, $w < 0$.

Thanks to the uniqueness result above, we present the following theorem:

\begin{theorem}\label{thm: decomposition}
For any $u \in \Ker$, there exist unique functions $u_r, u_s \in \Ker$ satisfying the following conditions:
    \begin{enumerate}
        \item $u \leq u_r$, $u \leq u_s$;
        \item $(dd^c u_r)^n = \mu_r(u) = {\bf 1}_{\{u > -\infty\}}(dd^c u)^n$;
        \item $(dd^c u_s)^n = \mu_s(u) = {\bf 1}_{\{u = -\infty\}}(dd^c u)^n$.
    \end{enumerate}
Moreover, $u_r + u_s \leq u$.
\end{theorem}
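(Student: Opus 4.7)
My plan is to split the argument into three parts -- existence, the inequality $u_r+u_s\le u$, and uniqueness -- all resting on the comparison principle Theorem~\ref{thm: uniqueness in N}.

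For existence, since $\mu_r(u), \mu_s(u)\le (dd^cu)^n$, the function $u$ itself is a subsolution of the Monge--Amp\`ere equations with right-hand sides $\mu_r(u)$ and $\mu_s(u)$. The subsolution theorem \cite[Theorem~4.14]{ACCP09} then produces $u_r,u_s\in \Ker(\tilde u)=\Ker$ (since $u\in \Ker$ forces $\tilde u=0$) with $u\le u_r$, $u\le u_s$, and the Monge--Amp\`ere measures prescribed in (2) and (3).

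For the inequality $u_r+u_s\le u$, my starting point is the standard superadditivity of the Monge--Amp\`ere operator in $\E$, which gives
\[
(dd^c(u_r+u_s))^n \;\ge\; (dd^cu_r)^n + (dd^cu_s)^n \;=\; \mu_r(u)+\mu_s(u) \;=\; (dd^cu)^n.
\]
I set $w:=\max(u_r+u_s,u)\in \Ker$ (recall that $\Ker$ is a convex cone closed under maxima with $\Ker$-functions). The Bedford--Taylor maximum principle, applied to the full Monge--Amp\`ere measure in $\E$, yields $(dd^cw)^n\ge (dd^cu)^n$, while the trivial pointwise inequality $w\ge u$ gives $u\preceq w$. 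Theorem~\ref{thm: uniqueness in N} then forces $u\ge w$, so $w=u$, which is precisely $u_r+u_s\le u$.

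For uniqueness, any two candidates for $u_r$ share the non-pluripolar measure $\mu_r(u)$ and agree by \cite[Corollary~3.2]{ACCP09}. For $u_s$, given two candidates $u_s,u_s'$, I set $w_s:=\max(u_s,u_s')\in \Ker$. The maximum principle gives $(dd^cw_s)^n\ge \mu_s(u)=(dd^cu_s)^n$; combined with the trivial $u_s\preceq w_s$, Theorem~\ref{thm: uniqueness in N} yields $u_s\ge w_s\ge u_s'$, and symmetry gives $u_s=u_s'$. The principal obstacle throughout is the inequality $u_r+u_s\le u$: a naive attempt to verify $u\preceq u_r+u_s$ directly breaks down, because on the polar locus $\{u=-\infty\}$ the sum $u_r+u_s$ may well be finite while $u=-\infty$, so $u_r+u_s$ is not pointwise more singular than $u$. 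The device of passing through $w=\max(u_r+u_s,u)$ sidesteps this entirely, replacing the awkward singularity comparison with the trivial one $u\preceq w$.
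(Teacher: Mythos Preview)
Your argument is correct and genuinely different from the paper's. For existence and for the uniqueness of $u_r$ you do exactly what the paper does. The divergence is in the other two steps. For $u_r+u_s\le u$, the paper invokes Lemma~\ref{prop: P(u-v) is in Ea} to show $P(u-u_s,0)\in\E^a$ with $(dd^cP(u-u_s,0))^n\le (dd^cu_r)^n$, and then deduces $u_r\le P(u-u_s,0)$ from \cite[Corollary~3.2]{ACCP09}; you instead combine superadditivity $(dd^c(u_r+u_s))^n\ge (dd^cu)^n$ with the full-mass maximum principle of \cite{NP09} applied to $w=\max(u_r+u_s,u)$, and then feed the trivial relation $u\preceq w$ into Theorem~\ref{thm: uniqueness in N}. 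For the uniqueness of $u_s$, the paper takes the rooftop $P(u_s,v)$ and uses \cite[Lemma~4.1]{ACCP09} to match the singular masses before calling Theorem~\ref{thm: uniqueness in N}; you take the ordinary maximum $\max(u_s,u_s')$, use the \cite{NP09} inequality to get $(dd^c\max(u_s,u_s'))^n\ge\mu_s(u)$, and again invoke Theorem~\ref{thm: uniqueness in N}. Your route is shorter and avoids the envelope lemmas (Lemma~\ref{prop: P(u-v) is in Ea}, Corollary~\ref{cor: NP of rooftop}) entirely, at the cost of importing the full-measure Demailly-type inequality from \cite{NP09}; the paper's route stays within the envelope framework it has built, which is thematically consistent with the rest of the article. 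Both arguments ultimately hinge on Theorem~\ref{thm: uniqueness in N}.
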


\begin{proof}
By Theorem 4.14 (2) in~\cite{ACCP09}, there exists $u_r, u_s\in \E$ satisfying the three conditions in the theorem. It follows from Lemma~\ref{prop: P(u-v) is in Ea} that $P(u-u_s,0)\in \E^a$ and
\[
(dd^c P(u-u_s,0))^n\leq (dd^c u_r)^n.
\]
By~\cite[Corollary 3.2]{ACCP09}, we get the uniqueness of $u_r$ and we also have $u_r \leq P(u-u_s,0)$, hence $u_r+u_s\leq u$.

Assume now that $v\in \Ker$ is such that $u\leq v$ and $(dd^c v)^n = (dd^c u_s)^n={\bf 1}_{\{u=-\infty\}}(dd^c u)^n$. Then $w=P(u_s,v)\in \Ker$ and, by Lemma 4.1 in \cite{ACCP09}, since $u\leq w\leq \min(u_s,v)$,
\[
{\bf 1}_{\{w=-\infty\}}(dd^c w)^n = {\bf 1}_{\{v=-\infty\}} (dd^c v)^n = {\bf 1}_{\{u_s=-\infty\}} (dd^c u_s)^n.
\]
We get $(dd^c w)^n=(dd^c v)^n=(dd^c u_s)^n$ because these measures are supported on pluripolar sets.
Theorem \ref{thm: uniqueness in N} then ensures that $v=w=u_s$, finishing the proof.
\end{proof}

\section{Rooftops, Idempotency and Connectivity}\label{sec:RIC}

By employing Theorem~\ref{thm: uniqueness in N}, this section not only aims to prove our result on geodesic connectivity (Theorem~\ref{thm: geodesic connectivity}) but also significantly advance the development of rooftop techniques. We commence in Section~\ref{subsec1} by revisiting the necessary definitions of the asymptotic rooftop envelope $P[u](v)$ and the Green-Poisson residual function $g_u$, commonly referred to as the residual function. Theorem~\ref{thm: MA measure of asymptotic envelope} is then presented, offering insights into the Monge-Ampère measure associated with these residual functions.

 The \emph{idempotency property} for residual functions, $g_{g_u}=g_u$, was
conjectured in~\cite{Rash22} to hold for all $u\in\mathcal{PSH}^-(\Omega)$. In Theorem~\ref{thm:idempotenceH}, we provide an affirmative answer in $\Ker$, utilizing Theorem~\ref{thm: uniqueness in N} and Theorem~\ref{thm: MA measure of asymptotic envelope}. We then proceed to the discussion of the rooftop equality in Section~\ref{subsec3}. Recall that the \emph{rooftop equality} holds for a plurisubharmonic function $u$ if
\[
P[u](v) = P(g_u,v), \quad \text{ for all } v \in \mathcal{PSH}^-(\Omega).
\]
This property, verified in \cite{Ras23} for functions in $\F$, is substantially generalized in Theorem~\ref{thm: RE conj N}.

Subsequently, in Section~\ref{subsec4}, we investigate the geodesic connectivity of plurisubharmonic functions. Building on the foundation established by Theorem~\ref{thm: uniqueness in N} and Theorem~\ref{thm: RE conj N}, we complete the proof of Theorem~\ref{thm: geodesic connectivity}.

We conclude this overview with the following conjecture, emphasizing  that if Conjecture~\ref{conj} holds true, it would affirm the idempotency property for all functions in $\mathcal{PSH}^-(\Omega)$. To see this, let $u_t$ be the largest plurisubharmonic geodesic segment lying below $u$ and $0$, then $u_0=g_u$. This is a direct consequence of the geodesic connectivity criterion in \cite{Rash22}  since $P[u](0)=g_u$. Hence, any $u\in \PSHM{\Omega}$ can be connected to $g_u$ by a plurisubharmonic geodesic (see also \cite[Corollary 8.2]{Rash22}). Thus, if the Conjecture~\ref{conj} holds, then so is the idempotency conjecture $g_u=g_{g_u}$.

\medskip

\begin{conjecture}\label{conj}
Let $u_0, u_1 \in \E$. Then $u_0$ and $u_1$ can be connected by a plurisubharmonic geodesic if and only if $g_{u_0} = g_{u_1}$.
\end{conjecture}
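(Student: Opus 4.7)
The conjecture splits into two implications, both of which I propose to attack via the Darvas--Rashkovskii connectivity criterion combined with the rooftop machinery developed in this paper. For the forward direction, suppose $u_0, u_1 \in \E$ are connected by a plurisubharmonic geodesic. The criterion of \cite{Dar17, Rash22} gives $P[u_0](u_1) = u_1$ and $P[u_1](u_0) = u_0$. By monotonicity of $v \mapsto P[u_0](v)$ applied to $u_1 \leq 0$, this yields $u_1 = P[u_0](u_1) \leq P[u_0](0) = g_{u_0}$, and symmetrically $u_0 \leq g_{u_1}$. Applying the monotone residual operator $g_{\cdot}$ and invoking idempotency (Theorem~\ref{thm:idempotenceH}, at the cost of extending it from $\Ker$ to all of $\E$) yields $g_{u_1} = g_{g_{u_1}} \leq g_{g_{u_0}} = g_{u_0}$, and the reverse by symmetry, hence $g_{u_0} = g_{u_1}$.

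For the reverse direction, assume $g := g_{u_0} = g_{u_1}$. If the rooftop equality $P[u](v) = P(g_u, v)$ of Theorem~\ref{thm: RE conj N} could be established for the pair $(u_0, u_1)$, then
\[
P[u_0](u_1) = P(g_{u_0}, u_1) = P(g, u_1) = u_1,
\]
the last equality following from $u_1 \leq g_{u_1} = g$; symmetrically $P[u_1](u_0) = u_0$, and the connectivity criterion concludes. Thus the reverse implication reduces to a general rooftop equality for pairs in $\E$, combined with idempotency, both of which would then imply geodesic connectivity by a clean envelope argument.

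The main obstacle is that both Theorem~\ref{thm: RE conj N} and Theorem~\ref{thm: geodesic connectivity} presuppose generalized boundary data: one needs $u_i \in \Ker(H_i)$ for suitable $H_i$, and verification of the rooftop equality at the level of $H_0, H_1$. For $u \in \E$ with no generalized boundary values, i.e.\ $u$ lying in no class $\Ker(H)$ (possible even for bounded $u$ by \cite[Example~5.3]{Ceg08}), this is not directly available. A natural workaround would be to exhaust $\Omega$ by a fundamental sequence $[\Omega_j]$, form $\F(\Omega_j)$-projections of $u_0, u_1$, apply Theorem~\ref{thm: geodesic connectivity} on each $\Omega_j$, and pass to a diagonal limit of the resulting geodesic segments. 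The technical heart lies in proving stability of the residual function $g_u$ and of the rooftop equality under such exhaustion, and in controlling the boundary behavior of the limiting subgeodesic at $t = 0$ and $t = 1$, which is precisely the phenomenon the hypothesis $g_{u_0} = g_{u_1}$ is meant to overcome; success of the whole scheme hinges on showing that this hypothesis persists robustly under both restriction to $\Omega_j$ and passage to the diagonal limit.
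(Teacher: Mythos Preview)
This statement is a \emph{conjecture} in the paper, not a theorem; the paper offers no proof and explicitly presents it as open. So there is no paper proof to compare against.

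Your forward implication is circular. You correctly derive $u_0\le g_{u_1}$ and $u_1\le g_{u_0}$ from the connectivity criterion, but then you invoke idempotency $g_{g_{u_i}}=g_{u_i}$ for arbitrary $u_i\in\E$. As the paper notes immediately before stating the conjecture, every $u\in\PSHM{\Omega}$ is geodesically connected to $g_u$ (since $P[u](0)=g_u$); applying the forward implication of the conjecture to the pair $(u,g_u)$ yields $g_u=g_{g_u}$. Thus the forward implication of the conjecture is \emph{equivalent} to idempotency on the relevant class, and your proposed proof assumes exactly what it needs to establish. Theorem~\ref{thm:idempotenceH} only gives idempotency for $u\in\Ker(H)$ when the boundary datum $H$ itself already satisfies $g_H=g_{g_H}$; extending this to all of $\E$ is precisely the open problem.

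For the reverse implication, your reduction to the rooftop equality $P[u_0](u_1)=P(g_{u_0},u_1)$ is clean and correct, but this equality for arbitrary pairs in $\E$ is again open: Theorem~\ref{thm: RE conj N} requires the rooftop equality to hold already at the level of the boundary values $H_0,H_1$, and the paper's discussion of boundary values (Section~\ref{Sec: BV}, Section~\ref{sect: Open Problems}) makes clear that not every $u\in\E$ lies in some $\Ker(H)$. The exhaustion scheme you sketch is a natural line of attack, but the paper provides no stability results for $g_u$ or for the rooftop equality under restriction to $\Omega_j$, and controlling the endpoint behaviour of a diagonal limit of geodesics is exactly the delicate boundary phenomenon that obstructs the conjecture in the first place. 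In short, both halves of your plan reduce the conjecture to statements that are themselves open and of comparable difficulty.
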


\subsection{Asymptotic rooftops, residual functions, and idempotency}\label{subsec1}

Given $u,v \in \mathcal{PSH}^-(\Omega)$, the \emph{asymptotic rooftop envelope} $P[u](v)$ is defined as
\[
P[u](v) = \left( \lim_{C \to +\infty} P(u + C, v) \right)^*.
\]
For the case where $v = 0$, we denote $g_u = P[u](0)$ and refer to it as the \emph{Green-Poisson residual function} of $u$, or simply the \emph{residual function} of $u$. We will use repeatedly that for any $u, v, w \in \PSHM{\Omega}$, we have
\[
P[u + w](v) \geq P[u](v) + g_w .
\]
In particular, $g_{u + w} \geq g_u + g_w$, and if $g_w = 0$, then $g_{u + w} = g_u$. The condition $g_w = 0$ means that $w$ does not possess strong singularities, neither in $\Omega$ nor on the boundary $\partial\Omega$.

By construction, it holds that $u \le g_u$. However, their singularities may not coincide, implying that the relation $u \simeq g_u$ may not hold. Nevertheless, the discrepancy between the singularities is minimal. In this context, $g_u$ can be viewed as a singular skeleton of $u$.

\begin{theorem}\label{thm: MA measure of asymptotic envelope}
	For all $u\in \PSHM{\Omega}$, we have
	$\mu_r(g_u) =0$. If $u\in \mathcal E$, then $(dd^c P(u-g_u))^n\leq \mu_r(u)$, so $P(u-g_u)\in \E^a$, and $(dd^c g_u)^n = \mu_s(u)$.
\end{theorem}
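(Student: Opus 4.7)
The plan is to prove the three assertions in sequence: first $\mu_r(g_u)=0$ for any $u \in \PSHM{\Omega}$, then $(dd^c g_u)^n = \mu_s(u)$ when $u \in \E$, and finally deduce the claim about $P(u-g_u)$ from Lemma~\ref{prop: P(u-v) is in Ea}.

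For the first assertion, I would set $w_C := P(u+C,0)$, a negative plurisubharmonic function increasing to $g_u$ as $C \to +\infty$. Since $\min(u+C,0)$ is quasi-continuous, Theorem~\ref{thm: envelope} implies $\mu_r(w_C)$ is concentrated on the contact set. Applying Theorem~\ref{thm: maximum principle} together with $\mu_r(u+C)=\mu_r(u)$ and $\mu_r(0)=0$ yields
\[
\mu_r(w_C) \leq \mathbf{1}_{\{w_C = u+C\}}\mu_r(u) \leq \mathbf{1}_{\{u \leq -C\}}\mu_r(u),
\]
which tends to zero as $C \to +\infty$ since $\mu_r(u)$ puts no mass on the pluripolar set $\{u=-\infty\}$. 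Monotone convergence implies convergence in capacity, so Theorem~\ref{thm: lsc of NP} gives $\mu_r(g_u) \leq \liminf_C \mu_r(w_C) = 0$.

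For the second assertion, with $u \in \E$, I introduce $\varphi_t := \max(u, g_u - t)$, which decreases to $u$ as $t \to +\infty$ and satisfies $\varphi_t \simeq g_u$ since $g_u - t \leq \varphi_t \leq g_u$. By Lemma~\ref{lem: ACCP09 lemma 4.1} and the first assertion, $\mu_s(\varphi_t) = \mu_s(g_u) = (dd^c g_u)^n$. By plurifine locality of the non-pluripolar product, $\mu_r(\varphi_t) = \mu_r(u)$ on the plurifine-open set $\{u > g_u - t\}$ where $\varphi_t = u$, and $\mu_r(\varphi_t) = 0$ on $\{u < g_u - t\}$ where $\varphi_t = g_u - t$ (using $\mu_r(g_u)=0$). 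For all but countably many $t$ the coincidence locus $\{u = g_u - t\}$ carries no non-pluripolar mass, so $\mu_r(\varphi_t) = \mathbf{1}_{\{u > g_u - t\}}\mu_r(u) \nearrow \mu_r(u)$ as $t \to +\infty$. Therefore $(dd^c \varphi_t)^n = \mu_r(\varphi_t) + (dd^c g_u)^n$ converges weakly to $\mu_r(u) + (dd^c g_u)^n$. On the other hand, Cegrell's monotone continuity theorem for decreasing sequences in $\E$ identifies the same weak limit as $(dd^c u)^n = \mu_r(u) + \mu_s(u)$, and comparing the two yields $(dd^c g_u)^n = \mu_s(u)$.

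For the third assertion, the bound $(dd^c P(u-g_u))^n \leq \mu_r(u)$ and the membership $P(u-g_u) \in \E^a$ follow from Lemma~\ref{prop: P(u-v) is in Ea} applied with $v = g_u$: the hypothesis $u \preceq g_u$ is immediate from $u \leq g_u \leq 0$, and the equality of pluripolar Monge-Amp\`ere masses $\mu_s(u) = \mu_s(g_u) = (dd^c g_u)^n$ is precisely the identity established in the second step. The main anticipated obstacle is the rigorous treatment of plurifine locality -- verifying the exact identity $\mu_r(\varphi_t) = \mathbf{1}_{\{u > g_u - t\}} \mu_r(u)$ as measures and the compatibility of the two weak limits of $(dd^c \varphi_t)^n$ -- together with the subsidiary technical point that $g_u$ lies in $\E$ so Cegrell's monotone continuity applies to $\varphi_t \searrow u$, which one secures from the sandwich $u \leq g_u \leq 0$ with $u \in \E$.
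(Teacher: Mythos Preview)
Your argument for the first assertion is exactly the simplified proof the paper records in its Remark following Theorem~\ref{thm: MA measure of asymptotic envelope}, but that remark is explicit that this route requires $u\in\E$. For a general $u\in\PSHM{\Omega}$ the maximum principle (Theorem~\ref{thm: maximum principle}) and the lower semicontinuity of the non-pluripolar operator (Theorem~\ref{thm: lsc of NP}) are stated only for functions in $\E$, and $\mu_r(u)$ need not even be locally finite; so the bound $\mu_r(w_C)\le {\bf 1}_{\{u\le -C\}}\mu_r(u)$ and the passage to the limit are not available. The paper handles the general case by approximating $u$ from above by bounded $u_j$, working with the truncations $\max(P(u_j+C,0),-t)$, introducing quasi-continuous cutoffs $f_{j,C,\varepsilon}^t$, and passing to the limit via Xing's theorem---in effect rebuilding the non-pluripolar identity by hand so that no hypothesis on $u$ beyond plurisubharmonicity is needed.

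Your second step also has a gap that you flag but do not close. Plurifine locality gives $\mu_r(\varphi_t)=\mu_r(u)$ on $\{u>g_u-t\}$ and $\mu_r(\varphi_t)=0$ on $\{u<g_u-t\}$, but it does not control $\mu_r(\varphi_t)$ on the coincidence set $\{u=g_u-t\}$; the ``all but countably many $t$'' device fails here because the measure $\mu_r(\varphi_t)$ itself varies with $t$. From $(dd^c\varphi_t)^n\to(dd^cu)^n$ and $\mu_s(\varphi_t)=\mu_s(g_u)$ you only obtain $\mu_r(\varphi_t)\to\mu_r(u)+(\mu_s(u)-\mu_s(g_u))$, and combining with Theorem~\ref{thm: lsc of NP} yields merely $\mu_s(u)\ge\mu_s(g_u)$, not equality; weak limits of non-pluripolar measures can charge pluripolar sets, so no contradiction arises. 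Consequently the hypothesis of Lemma~\ref{prop: P(u-v) is in Ea} needed in your third step is not yet verified. The paper reverses the order: it first proves $(dd^cP(u-g_u))^n\le\mu_r(u)$ directly by approximating $g_u$ from below via $v_t=P(u+t,0)$, setting $w_t=P(u-v_t,0)\in L^\infty$, and arguing as in the proof of Lemma~\ref{prop: P(u-v) is in Ea}; only after $P(u-g_u)\in\E^a$ is in hand does the sandwich $g_u+P(u-g_u)\le u\le g_u$ combined with \cite[Lemma~4.1]{ACCP09} yield $\mu_s(g_u)=\mu_s(u)$.
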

\begin{proof}
Fix $C>0$ and let $[u_j]$ be a sequence of bounded plurisubharmonic functions decreasing to $u$.
Set $\varphi_{j,C}^t= \max(\varphi_{j,C},-t)$, $\varphi_{j,C}=P(u_j+C,0)$, $\varphi_C=P(u+C,0)$.
According to Corollary \ref{cor: NP of rooftop} and \eqref{eq: NP plurifine property}, we have
\[
{\bf 1}_{\{\varphi_{j,C}>-t;\; u_j>-C\}}(dd^c \varphi_{j,C}^t)^n=0,
\]
leading to
	 \[
	 f_{j,C,\varepsilon}^t (dd^c \varphi_{j,C}^t)^n =0,
	 \]
	where
	 \[
	 f_{j,C,\varepsilon}^t = \frac{\max(\varphi_{j,C}+t,0)}{\max(\varphi_{j,C}+t,0)+ \varepsilon} \times \frac{\max(u_j+C,0)}{\max(u_j+C,0)+ \varepsilon},
	 \]
are quasi-continuous, uniformly bounded (with values in $[0,1]$), and converge in capacity to
	 \[
	  f_{C,\varepsilon}^t = \frac{\max(\varphi_{C}+t,0)}{\max(\varphi_{C}+t,0)+ \varepsilon} \times \frac{\max(u+C,0)}{\max(u+C,0)+ \varepsilon},
	 \]
  as $j \to +\infty$. Thus, using Xing's theorem~\cite{Xing00,GZbook} we obtain
	  \[
	 f_{C,\varepsilon}^t (dd^c \max(\varphi_C,-t))^n  =0.
	 \]
For $C_0 < C$, we then have
\[
\frac{\max(\varphi_{C}+t,0)}{\max(\varphi_{C}+t,0)+ \varepsilon} \times \frac{\max(u+C_0,0)}{\max(u+C_0,0)+ \varepsilon}(dd^c \max(\varphi_{C},-t))^n=0.
\]
Letting $C\to +\infty$, we arrive at
\[
\frac{\max(g_u+t,0)}{\max(g_u+t,0)+ \varepsilon} \times \frac{\max(u+C_0,0)}{\max(u+C_0,0)+ \varepsilon}(dd^c \max(g_u,-t))^n=0.
\]
We finally let $\varepsilon\to 0$ and then $C_0\to +\infty$ to get
\[
{\bf 1}_{\{g_u>-t\}}(dd^c \max(g_u,-t))^n=0,
\]
which yields $\mu_r(g_u)=0$.

We next assume $u\in \E$. To see that $\mu_s(u)=\mu_s(g_u)$, one can use \cite{Ceg18}. We provide an alternative proof using the plurisubharmonic envelopes.
We set $w_t=P(u-v_t)=P(u-v_t,0)$. Then $w_t\in \PSHM{\Omega}\cap L^{\infty}(\Omega)$ and $w_t\searrow P(u-g_u)$. Arguing as in the proof of Lemma~\ref{prop: P(u-v) is in Ea} we see that
\[
(dd^c w_t)^n \leq \mu_r(u),
\]
hence $(dd^c P(u-g_u))^n\leq \mu_r(u)$. In particular $P(u-g_u)\in \E^a$. Thus, from
\[
g_u+P(u-g_u) \leq u \leq g_u,
\]
and Lemma \ref{lem: ACCP09 lemma 4.1} we conclude that $\mu_s(g_u)=\mu_s(u)$.
\end{proof}

\begin{remark}
If we assume that $u\in \E$, in the first statement of Theorem~\ref{thm: MA measure of asymptotic envelope},  then the proof can be simplified as follows.
For each $t>0$ we set $v_t=P(u+t,0)$. Then $v_t\nearrow g_u$ almost everywhere in $\Omega$, hence $\mu_r(v_t)$ weakly converges to $\mu_r(g_u)$ as follows from Theorem \ref{thm: lsc of NP}.
It follows from Corollary \ref{cor: NP of rooftop} that
    \[
    \mu_r (v_t) \leq {\bf 1}_{\{v_t=u+t\}}  \mu_r(u) \leq {\bf 1}_{\{u\leq -t\}} \mu_r(u) \to 0
    \]
as $t\to +\infty$, giving $\mu_r(g_u)=0$.  Here, we use the fact that $\mu_r(u)$ is a positive non-pluripolar measure.
\end{remark}

\begin{remark}
The relation $P(u - g_u) \in \E^a$ means that the function $u \in \E$ differs from $g_u$ by a plurisubharmonic function that lacks strong singularities. However, this condition does not universally apply to all plurisubharmonic functions, as demonstrated by the following example. Consider the function $u(z) = u(z_1, z') = \log|z_1|$ in the unit ball $\B$. For this function, we have
\[
g_{u}(z) = \log\frac{|z_1|}{\sqrt{1 - |z'|^2}}, \quad z = (z_1, z')
\]
(see \cite[Example 4.7.1]{Rash22}). Consequently, for $w = P(u - g_u)$, it holds that $w \le u - g_u = \frac{1}{2}\log(1 - |z'|^2)$. Applying the maximum principle to the slices $\{z \in \B : z_1 = z_1^0\}$, we find $w \le \log|z_1|$, which leads to $g_w = g_u$.
\end{remark}

Next, we establish that the idempotency property for residual functions, $g_{g_u} = g_u$, is valid, particularly for functions in $\Ker$. This result extends the findings of~\cite[Theorem 3.6]{Rash22}.

\begin{theorem}\label{thm:idempotenceH}
Let $u\in \Ker(H)$, $H\in\E$. If $H$ satisfies $g_H=g_{g_H}$, then $g_u=g_{g_u}$. In particular, if $g_u\in \Ker$ then $g_{g_u}=g_u$.
\end{theorem}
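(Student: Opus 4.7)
The plan is to apply the uniqueness Theorem~\ref{thm: uniqueness in N} to the pair $(g_u, g_{g_u})$ inside the class $\Ker(g_H)$. Since $w \le g_w$ for every $w \in \PSHM{\Omega}$, one automatically has $g_u \le g_{g_u}$, so $g_u \preceq g_{g_u}$ holds and only the reverse inequality $g_u \ge g_{g_u}$ needs to be established.

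First I would verify the equality of Monge-Amp\`ere measures $\ddcn{g_u} = \ddcn{g_{g_u}}$. Since $u \in \Ker(H) \subset \E$, Theorem~\ref{thm: MA measure of asymptotic envelope} gives $\mu_r(g_u) = 0$ and $\ddcn{g_u} = \mu_s(u)$. Once $g_u \in \E$ is secured in the next step, applying that theorem to $g_u$ yields $\ddcn{g_{g_u}} = \mu_s(g_u)$; the vanishing $\mu_r(g_u) = 0$ makes $\ddcn{g_u}$ purely singular, so $\mu_s(g_u) = \ddcn{g_u}$, and hence $\ddcn{g_u} = \ddcn{g_{g_u}}$.

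Next I would show that $g_u, g_{g_u} \in \Ker(g_H)$ with $g_H \in \E$. For the upper bounds, monotonicity of $w \mapsto g_w$ and $u \le H$ give $g_u \le g_H$; the idempotency hypothesis then yields the decisive $g_{g_u} \le g_{g_H} = g_H$. For the lower bounds, I would pick $\varphi \in \Ker$ with $u \ge \varphi + H$ and apply the standing inequality $P[a+b](c) \ge P[a](c) + g_b$ with $c=0$ to obtain
\[
g_u \ge g_{\varphi + H} \ge g_\varphi + g_H, \qquad g_{g_u} \ge g_u \ge g_\varphi + g_H.
\]
Since $\varphi \le g_\varphi \le 0$ with $\varphi \in \Ker$, the closure of $\Ker$ under domination by PSH functions $\le 0$ places $g_\varphi$ in $\Ker$; the same reasoning applied to $H$ places $g_H$ in $\E$. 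Theorem~\ref{thm: uniqueness in N}, applied inside $\Ker(g_H)$ to $g_u \preceq g_{g_u}$ with equal Monge-Amp\`ere measures, then forces $g_u \ge g_{g_u}$, and equality follows.

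The ``in particular'' clause runs the same three steps with $H = 0$: the hypothesis is then automatic, and $g_{g_u} \in \Ker$ follows from $g_u \le g_{g_u} \le 0$ and $g_u \in \Ker$ via the same closure property. The hard part will be the class-membership step, namely keeping track of the right Cegrell class and invoking the closure of $\Ker$ and $\E$ under dominating PSH majorants bounded by $0$. Note that the idempotency hypothesis on $H$ is used at exactly one point --- the upper bound $g_{g_u} \le g_H$ --- without which one could not realise $g_{g_u}$ inside $\Ker(g_H)$ and the uniqueness theorem would not apply.
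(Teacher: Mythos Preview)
Your proposal is correct and follows essentially the same route as the paper: show $g_u, g_{g_u}\in\Ker(g_H)$ via the sub/superadditivity of the residual operator together with the hypothesis $g_H=g_{g_H}$, compute $(dd^c g_u)^n=(dd^c g_{g_u})^n$ from Theorem~\ref{thm: MA measure of asymptotic envelope}, and conclude by the uniqueness Theorem~\ref{thm: uniqueness in N}. The only cosmetic difference is that the paper uses $w\in\Ker$ itself (via $g_w\ge w$) for the lower bound, whereas you use $g_\varphi\in\Ker$; both are valid, and your identification of the single place where the idempotency hypothesis on $H$ enters is accurate.
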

\begin{proof}
Consider $w \in \Ker$ satisfying $u \geq H + w$. Noting that
\[
g_{u+w} \geq g_u + g_w \geq g_u + w,
\]
we deduce that $g_u \in \Ker(g_H)$, $g_{g_u} \in \Ker(g_{g_H})$. Theorem~\ref{thm: MA measure of asymptotic envelope} assures that $(dd^c g_u)^n = (dd^c g_{g_u})^n = \mu_s(u)$. Thanks to Theorem~\ref{thm: uniqueness in N}, we conclude that $g_u = g_{g_u}$.
\end{proof}

\subsection{Rooftop equality}\label{subsec3}

In the next section, we address plurisubharmonic geodesics and connectivity, where the rooftop equality is essential. That is, for any plurisubharmonic function $u$, the following holds:
\begin{equation}\label{eq:RTi}
P[u](v) = P(g_u,v), \quad \text{for all } v \in \mathcal{PSH}^-(\Omega).
\end{equation}
In Theorem~\ref{thm: RE conj N}, we prove that if $H_1 \in \E$ and $H_2 \in \mathcal{PSH}^-(\Omega)$ satisfy \eqref{eq:RTi}, then this relation extends to all $u \in \Ker(H_1)$ and $v \in \Ker(H_2)$. However, we must first establish some auxiliary results.

\begin{lemma}\label{lem: MA of Puv v not in E}
	Let $u,v\in \PSHM{\Omega}$, and set $w=P(u,v)$. Then
	\[
	{\bf 1}_{\{-t<w<v\}} (dd^c \max(w,-t))^n \leq {\bf 1}_{\{-t<u\}} (dd^c \max(u,-t))^n, \; t>0.
	\]
\end{lemma}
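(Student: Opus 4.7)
The plan is to exploit the balayage property of the rooftop envelope, encoded in Theorem~\ref{thm: envelope}, together with the plurifine localization formula~\eqref{eq: NP plurifine property} and the Bedford--Taylor maximum principle of Theorem~\ref{thm: maximum principle}, so as to reduce the inequality to the truncated level, where all objects are bounded and the classical tools apply directly.

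First I would dispose of the degenerate case $w=P(u,v)\equiv -\infty$: then $w_t\equiv -t$, $(dd^c w_t)^n=0$, and the inequality is trivial. Henceforth assume $w\not\equiv -\infty$. Noting that $\min(u,v)$ is quasi-continuous (as the minimum of two quasi-continuous functions) and that by definition $P(\min(u,v))=P(u,v)=w$, I would apply Theorem~\ref{thm: envelope} to obtain $\mathbf{1}_{\{w<\min(u,v)\}}\mu_r(w)=0$. Multiplying this identity by $\mathbf{1}_{\{w>-t\}}$ and invoking~\eqref{eq: NP plurifine property} yields
\[
\mathbf{1}_{\{-t<w<\min(u,v)\}}(dd^c w_t)^n=0.
\]

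Next, using the pointwise inequality $w\leq u$, I would decompose $\{-t<w<v\}=E_1\cup E_2$ as the disjoint union
\[
E_1=\{-t<w<v,\ w<u\},\qquad E_2=\{-t<w<v,\ w=u\}.
\]
Since $E_1\subset\{-t<w<\min(u,v)\}$, the display above gives $\mathbf{1}_{E_1}(dd^c w_t)^n=0$. On $E_2$ one has $u=w>-t$, whence $u_t=u=w=w_t$ on $E_2$, so $E_2\subset\{w_t=u_t\}$. As $w_t\leq u_t$ and both $w_t,u_t$ are bounded plurisubharmonic functions, hence members of $\mathcal{E}$, Theorem~\ref{thm: maximum principle} supplies
\[
\mathbf{1}_{\{w_t=u_t\}}(dd^c w_t)^n\leq \mathbf{1}_{\{w_t=u_t\}}(dd^c u_t)^n.
\]
Combining this with $E_2\subset\{u>-t\}$ gives $\mathbf{1}_{E_2}(dd^c w_t)^n\leq \mathbf{1}_{\{u>-t\}}(dd^c u_t)^n$, and adding the contributions from $E_1$ and $E_2$ completes the proof.

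The main obstacle to watch out for is that $u,v$ are only assumed to lie in $\PSHM{\Omega}$, not in $\mathcal{E}$, so $w$ is unbounded and the classical balayage for bounded envelopes cannot be invoked directly. Theorem~\ref{thm: envelope} is precisely tailored for this setting: it requires only quasi-continuity of the obstacle and imposes no finiteness condition on $\mu_r(w)$. Coupling it with the plurifine identity~\eqref{eq: NP plurifine property} transfers the balayage information down to the bounded truncations $w_t,u_t$, at which level Theorem~\ref{thm: maximum principle} is immediately applicable and finishes the argument.
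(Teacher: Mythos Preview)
Your argument is correct, and it is genuinely different from the paper's route. The paper approximates $w=P(u,v)$ from above by the bounded rooftops $w_s=P(u_s,v_s)$, applies Corollary~\ref{cor: NP of rooftop} at the bounded level, introduces the regularized cutoffs
\[
f_{s,\varepsilon}^t = \frac{\max(w_s+t,0)}{\max(w_s+t,0)+\varepsilon}\cdot\frac{\max(v_s-w_s,0)}{\max(v_s-w_s,0)+\varepsilon},
\]
and passes to the limit $s\to\infty$ via Xing's convergence-in-capacity theorem, followed by $\varepsilon\to 0$. You instead invoke Theorem~\ref{thm: envelope} directly on the unbounded envelope $w=P(\min(u,v))$, then localize to the $t$-level using~\eqref{eq: NP plurifine property} and finish with the maximum principle on the bounded pair $w_t\leq u_t$. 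Your route is shorter: it bypasses the double limit and the appeal to Xing's theorem entirely, at the cost of relying on the fact (established in the paper) that Theorem~\ref{thm: envelope} already holds for merely quasi-continuous obstacles without any boundedness of $P(h)$. The paper's approximation scheme, on the other hand, is the same machinery reused in Corollary~\ref{cor: MA of Puv v not in E} and Theorem~\ref{thm: MA measure of asymptotic envelope}, so it has the advantage of uniformity of method across several proofs.
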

\begin{proof}
For $t > 0$, define $u_t = \max(u, -t)$, $v_t = \max(v, -t)$, and $w_t = P(u_t, v_t)$. Given $s > t > 0$, we apply Corollary~\ref{cor: NP of rooftop} and \eqref{eq: NP plurifine property} to obtain
	 \[
	 f_{s,\varepsilon}^t (dd^c \max(w_s,-t))^n \leq  {\bf 1}_{\{-t<w_s<v_s\}} (dd^c u_s)^n \leq {\bf 1}_{\{-t<u\}}(dd^c u_t)^n,
	 \]
where the function
	 \[
	 f_{s,\varepsilon}^t = \frac{\max(w_s+t,0)}{\max(w_s+t,0)+ \varepsilon} \times \frac{\max(v_s-w_s,0)}{\max(v_s-w_s,0)+ \varepsilon},
	 \]
is quasi-continuous, uniformly bounded with values in $[0,1]$, and converges in capacity to
	 \[
	  f_{\varepsilon}^t = \frac{\max(w+t,0)}{\max(w+t,0)+ \varepsilon} \times \frac{\max(v-w,0)}{\max(v-w,0)+ \varepsilon},
	 \]
as $s \to +\infty$.

Applying Xing's theorem~\cite{Xing00,GZbook}, we deduce
	  \[
	 f_{\varepsilon}^t (dd^c \max(w,-t))^n  \leq {\bf 1}_{\{-t<u\}}(dd^c u_t)^n.
	 \]
As $\varepsilon \to 0^+$, we conclude the proof with the desired result.
\end{proof}

\begin{corollary}
	\label{cor: MA of Puv v not in E}
	Let $u,v\in \PSHM{\Omega}$, and set $w=P[u](v)$. Then
	\[
	{\bf 1}_{\{-t<w<v\}} (dd^c \max(w,-t))^n =0, \; t>0.
	\]
\end{corollary}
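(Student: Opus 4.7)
The plan is to apply the balayage principle for each envelope $w_C := P(u+C,v)$ and then pass to the limit $C\to +\infty$, using that $w_C\in\PSHM{\Omega}$ satisfies $w_C\nearrow w$ almost everywhere in $\Omega$. For each fixed $C>0$, note that $w_C = P(\min(u+C,v))$ and $\min(u+C,v)$ is quasi-continuous. Theorem~\ref{thm: envelope} gives ${\bf 1}_{\{w_C<\min(u+C,v)\}}\mu_r(w_C)=0$, and combining with the plurifine identity~\eqref{eq: NP plurifine property} (applied to $w_C$ at cutoff level $t$) yields the key vanishing identity
\[
{\bf 1}_{\{-t<w_C<v,\; w_C<u+C\}}(dd^c\max(w_C,-t))^n = 0.
\]

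To pass to the limit $C\to+\infty$, I would introduce the quasi-continuous multiplier
\[
\phi_{C,\varepsilon} := \frac{\max(w_C+t,0)}{\max(w_C+t,0)+\varepsilon}\cdot\frac{\max(v-w_C,0)}{\max(v-w_C,0)+\varepsilon}\cdot\frac{\max(u+C-w_C,0)}{\max(u+C-w_C,0)+\varepsilon},
\]
which takes values in $[0,1]$ and is supported on $\{-t<w_C<v,\; w_C<u+C\}$, so that $\phi_{C,\varepsilon}(dd^c\max(w_C,-t))^n=0$ for every $C,\varepsilon>0$. Since $\max(w_C,-t)\nearrow \max(w,-t)$ is a uniformly bounded increasing sequence, Bedford-Taylor continuity applies to the Monge-Amp\`ere currents; combined with capacity convergence of $\phi_{C,\varepsilon}$ to $\phi_\varepsilon := f_\varepsilon(w+t)\,f_\varepsilon(v-w)$ as $C\to+\infty$, where $f_\varepsilon(x)=\max(x,0)/(\max(x,0)+\varepsilon)$, Xing's theorem \cite{Xing00,GZbook} gives $\phi_\varepsilon\,(dd^c\max(w,-t))^n=0$. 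Letting $\varepsilon\to 0^+$ and applying monotone convergence then yields the claimed identity ${\bf 1}_{\{-t<w<v\}}(dd^c\max(w,-t))^n=0$.

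The delicate point is verifying the capacity convergence of the third factor $f_\varepsilon(u+C-w_C)\to 1$ as $C\to+\infty$. Since $w_C\leq 0$, one has the inclusion $\{u+C-w_C<M\}\subset\{u<M-C\}$ for any $M>0$, and this sublevel set has vanishing Monge-Amp\`ere capacity as $C\to+\infty$ because $\{u=-\infty\}$ is pluripolar. This ensures the third factor drops out cleanly in the limit, leaving the two-factor product $\phi_\varepsilon$ that approximates ${\bf 1}_{\{-t<w<v\}}$ monotonically as $\varepsilon\to 0^+$.
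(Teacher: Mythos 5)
Your proposal is correct and takes a genuinely different route from the paper's. The paper first proves Lemma~\ref{lem: MA of Puv v not in E} (a comparison between ${\bf 1}_{\{-t<w<v\}}(dd^c\max(w,-t))^n$ and ${\bf 1}_{\{-t<u\}}(dd^c\max(u,-t))^n$ at the level of $P(u,v)$), applies it to $u_C=P(u+C,0)$, and then passes to the limit with a two-factor multiplier, obtaining an inequality whose right-hand side involves $\max(g_u,-t)$; the conclusion is then reached by invoking Theorem~\ref{thm: MA measure of asymptotic envelope} ($\mu_r(g_u)=0$). You instead go directly from the balayage property (Theorem~\ref{thm: envelope}) applied to $w_C=P(u+C,v)$, keep the constraint $\{w_C<u+C\}$ as a third factor in the multiplier, and show that this factor tends to $1$ in capacity as $C\to+\infty$, so it drops out in the limit. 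Your argument avoids the dependence on $\mu_r(g_u)=0$ (and on Lemma~\ref{lem: MA of Puv v not in E}) entirely, replacing it with the more elementary fact that sublevel sets of plurisubharmonic functions have small capacity, which makes the proof of this corollary more self-contained; the paper's version, on the other hand, reuses machinery already developed and is shorter given that context.

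One point you should make precise: the assertion that $\Capo(\{u<M-C\}\cap K)\to 0$ "because $\{u=-\infty\}$ is pluripolar" is not automatic — a decreasing intersection with pluripolar limit does not by itself force the capacities to vanish. What you actually need is the quantitative sublevel estimate $\Capo(\{u<-s\}\cap K)\le A_K/s$ for $u\in\PSH{\Omega}$ and $K\Subset\Omega$ (a Chebyshev-type inequality; equivalently, the local $L^1(\Capo)$-integrability of plurisubharmonic functions, as in Alexander-Taylor / Guedj-Zeriahi). Once that is cited, the capacity convergence $f_\varepsilon(u+C-w_C)\to 1$ — and hence $\phi_{C,\varepsilon}\to\phi_\varepsilon$ — follows, and the rest of the argument via Xing's theorem and monotone convergence in $\varepsilon$ is sound.
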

\begin{proof}
Let us set $u_C = P(u + C, 0)$ and $\varphi_C = P(u_C, v)$. Notice that $\varphi_C = P(u + C, v)$. Utilizing Lemma \ref{lem: MA of Puv v not in E} with $u$ replaced by $u_C$, we derive
	\[
	{\bf 1}_{\{-t<\varphi_C<v\}} (dd^c \max(\varphi_C,-t))^n \leq {\bf 1}_{\{-t<u_C\}}(dd^c \max(u_C,-t))^n, \; t>0.
	\]
Following the argument in the proof of Lemma \ref{lem: MA of Puv v not in E} with
 \[
 f_{C,\varepsilon}^t = \frac{\max(\varphi_C+t,0)}{\max(\varphi_C+t,0)+\varepsilon} \times \frac{\max(v-\varphi_C,0)}{\max(v-\varphi_C,0)+\varepsilon},
 \]
we reach the inequality
\[
	{\bf 1}_{\{-t<w<v\}} (dd^c w,-t))^n \leq {\bf 1}_{\{-t<g_u\}}(dd^c \max(g_u,-t))^n, \; t>0.
	\]
To conclude, we apply Theorem \ref{thm: MA measure of asymptotic envelope} to obtain the desired result.
\end{proof}

\begin{lemma}\label{lem: MA of w is zero}
    Let $u \in \E$, $v\in \PSHM{\Omega}$, and set $w=P(P[u](v)-P(g_u,v))$. Then $(dd^c w)^n=0$.
\end{lemma}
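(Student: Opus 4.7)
\emph{Strategy.} I would localise the non-pluripolar mass of $w$ on a contact set via the envelope theorem, annihilate it piece by piece, and then use bounded truncations to control the singular part. To begin, note that $P(u+C,v)$ is plurisubharmonic, bounded above by $v\le 0$ and by $u+C$, hence also by $P(u+C,0)\le g_u$; taking $\min$ with $v$ then gives $P(u+C,v)\le P(g_u,v)$ for every $C>0$. Passing to the upper regularised limit yields $P[u](v)\le P(g_u,v)$, so $h:=P[u](v)-P(g_u,v)\le 0$ quasi-everywhere. As a difference of plurisubharmonic functions, $h$ is quasi-continuous; moreover $P[u](v)\ge u+v$ combined with $P(g_u,v)\le 0$ gives $w\ge u+v\not\equiv-\infty$. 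Theorem~\ref{thm: envelope} then concentrates $\mu_r(w)$ on the contact set
\[
D=\{w+P(g_u,v)=P[u](v)\}.
\]

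\emph{Vanishing on $D$.} I would split $D=D_1\cup D_2$ according as $P[u](v)=v$ or $P[u](v)<v$. On $D_1$, the identity $w=v-P(g_u,v)$ and $P(g_u,v)\le v$ force $w\ge 0$; together with $w\le 0$ this yields $w\equiv 0$ on $D_1$. Since $D_1\subseteq\{w=0\}$, the second assertion of Theorem~\ref{thm: maximum principle} applied to the pair $w\le 0$ gives $\mathbf{1}_{D_1}\mu_r(w)\le \mathbf{1}_{\{w=0\}}\mu_r(0)=0$. On $D_2$, Corollary~\ref{cor: NP of P(u-v,0)} supplies $\mu_r(w)\le \mu_r(P[u](v))$, while Corollary~\ref{cor: MA of Puv v not in E} combined with~\eqref{eq: NP plurifine property} yields $\mathbf{1}_{\{-t<P[u](v)<v\}}\mu_r(P[u](v))=0$ for every $t>0$; letting $t\to+\infty$ shows that $\mu_r(P[u](v))$ is carried by $\{P[u](v)=v\}$, so $\mathbf{1}_{D_2}\mu_r(w)=0$. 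Adding the two contributions, $\mu_r(w)=0$.

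\emph{Singular part and principal obstacle.} Since $v$ is only assumed to lie in $\PSHM{\Omega}$, neither $P[u](v)$ nor $P(g_u,v)$ is a priori in $\E$, and $(dd^c w)^n$ must be understood through the bounded truncations $\max(\cdot,-t)$. To rule out pluripolar Monge--Amp\`ere mass of $w$ I would imitate the arguments of Lemma~\ref{prop: P(u-v) is in Ea} and Theorem~\ref{thm: MA measure of asymptotic envelope}: introduce the bounded envelopes
\[
w_t=P\bigl(\max(P[u](v),-t)-\max(P(g_u,v),-t)\bigr),
\]
use Corollary~\ref{cor: NP of rooftop} and the maximum principle to show $(dd^cw_t)^n\le \mu_r(u)$ uniformly in $t$, and pass to the limit via Xing's theorem and Theorem~\ref{thm: lsc of NP} to identify $w\in\E^a$. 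Combined with $\mu_r(w)=0$ from the previous step, this gives $(dd^c w)^n=0$. The main technical difficulty is precisely this passage through truncations: every invocation of the maximum principle or of quasi-continuity must be routed through bounded approximants, and the fact that the strong singularities of $P[u](v)$ are inherited from those of $g_u$ (Theorem~\ref{thm: MA measure of asymptotic envelope}) is what allows one to dominate the singular part by the non-pluripolar measure $\mu_r(u)$.
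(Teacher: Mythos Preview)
Your treatment of $\mu_r(w)=0$ is essentially sound and close in spirit to the paper's argument; the split $D=D_1\cup D_2$ and the use of Corollary~\ref{cor: MA of Puv v not in E} on $D_2$ are the right ingredients. The genuine gap lies in the singular part. You claim that the truncations $w_t=P(\varphi_t-\psi_t)$ (with $\varphi=P[u](v)$, $\psi=P(g_u,v)$) satisfy $(dd^cw_t)^n\le\mu_r(u)$, but you do not prove this, and the natural route---Corollary~\ref{cor: NP of P(u-v,0)} applied to $\varphi_t,\psi_t$---only yields $(dd^cw_t)^n\le(dd^c\varphi_t)^n$. The latter is \emph{not} dominated by $\mu_r(u)$: on the region $\{\varphi=v\}$ the measure $(dd^c\varphi_t)^n$ is governed by $v$, an arbitrary function in $\PSHM{\Omega}$ with no tie to $u$. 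Moreover, even if such a uniform bound held, passing to the limit to obtain $(dd^cw)^n\le\mu_r(u)$ requires knowing $w\in\E$ beforehand so that $(dd^cw_t)^n\to(dd^cw)^n$ weakly---a circularity you do not resolve.

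The paper sidesteps all of this with a one-line observation you are missing. From
\[
P[u](v)\ \ge\ P(u,v)\ \ge\ P(u-g_u)+P(g_u,v)
\]
one reads off $w\ge P(u-g_u)$ directly. Since $P(u-g_u)\in\E^a$ by Theorem~\ref{thm: MA measure of asymptotic envelope}, this immediately places $w\in\E$ (the class is stable under plurisubharmonic majorants) and forces $\mu_s(w)\le\mu_s(P(u-g_u))=0$ by the monotonicity of pluripolar Monge--Amp\`ere masses \cite[Lemma~4.1]{ACCP09}. Thus $w\in\E^a$ from the outset, and the full measure $(dd^cw)^n$ is available throughout the contact-set argument. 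Your intuition in the final sentence---that the strong singularities of $P[u](v)$ are inherited from $g_u$---points in the right direction, but the concrete inequality $w\ge P(u-g_u)$ is what converts it into a proof.
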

\begin{proof}
For simplicity, denote $f_t = \max(f, -t)$. Define
\[
\varphi = P[u](v), \quad \psi = P(g_u, v), \quad w = P(\varphi - \psi).
\]
Since,
\[
P[u](v) \geq P(u, v) \geq P(u - g_u) + P(g_u, v),
\]
we obtain $w \geq P(u - g_u)$. By Theorem~\ref{thm: MA measure of asymptotic envelope}, it follows that $w \in \E^a$.

Considering the case $w + \psi_t \leq \varphi_t$, equality holds on the set $D_t = \{w + \psi_t = \varphi_t\}$. Specifically, if $\psi(x) > -t$, then $w(x) + \psi_t(x) = w(x) + \psi(x) \leq \varphi(x) \leq \varphi_t(x)$. If $\psi(x) \leq -t$, then $w(x) + \psi_t(x) = w(x) - t \leq \varphi_t(x)$ since $w \leq 0$. From Theorem~\ref{thm: maximum principle}, we have
\[
{\bf 1}_{D_t} (dd^c w)^n + {\bf 1}_{D_t} (dd^c \psi_t)^n \leq {\bf 1}_{D_t} (dd^c \varphi_t)^n.
\]
Multiplying with ${\bf 1}_{\{\varphi =v\}}$, and noting that $\{\varphi =v\} \subset \{\varphi_t= \psi_t\}$, we obtain
\begin{flalign*}
	{\bf 1}_{D_t \cap \{\varphi =v\}} (dd^c w)^n + {\bf 1}_{D_t\cap \{\varphi =v\}} (dd^c \psi_t)^n & \leq {\bf 1}_{D_t \cap \{\varphi =v\}}  (dd^c \varphi_t)^n\\
	&  \leq  {\bf 1}_{D_t\cap \{\varphi =v\}} (dd^c \psi_t)^n.
\end{flalign*}
Here, we have used  $\varphi_t\leq \psi_t$, and Theorem \ref{thm: maximum principle} to obtain
\[
{\bf 1}_{\{\varphi_t=\psi_t\}} (dd^c \varphi_t)^n\leq {\bf 1}_{\{\varphi_t=\psi_t\}} (dd^c \psi_t)^n.
\]
It thus follows that ${\bf 1}_{D_t \cap \{\varphi =v\}} (dd^c w)^n=0$. By Corollary \ref{cor: MA of Puv v not in E}, we also have
\[
{\bf 1}_{D_t \cap \{-t<\varphi <v\}} (dd^c \varphi_t)^n=0.
\]
Combining these we then have
\[
{\bf 1}_{D_t \cap \{\varphi>-t\}} (dd^c w)^n=0.
\]
Let $D= \{w+\psi=\varphi\}$.
Since $\{\varphi>-\infty\} \cap D$ is contained in the union of $D_j \cap \{\varphi>-j\}$, $j\in \mathbb N$, it follows that
\[
\int_{\{\varphi>-\infty\} \cap D} (dd^c w)^n=0.
\]
Since $w\in \E^a$ and $(dd^c w)^n$ is supported on $D$ (by Theorem \ref{thm: envelope}), we infer that $(dd^c w)^n=0$, finishing the proof.
\end{proof}

\begin{theorem}
    \label{thm: RE conj N}
	Assume $H_1\in \E$ and $H_2\in \PSHM{\Omega}$ satisfy
 \[P[H_1](H_2)=P(g_{H_1},H_2).\]
 Then the same relation holds for all  $u\in \Ker(H_1)$, $v\in \Ker(H_2)$, i.e.
 \[P[u](v)=P(g_u,v).\]
\end{theorem}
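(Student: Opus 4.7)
My plan is to establish the nontrivial direction $P[u](v)\ge P(g_u,v)$; the reverse inequality is automatic because for every $C>0$ we have $P(u+C,v)\le P(u+C,0)\le g_u$ and $P(u+C,v)\le v$, whence $P(u+C,v)\le P(g_u,v)$, and then we pass $C\to +\infty$. Mirroring the uniqueness strategy of Theorem~\ref{thm: uniqueness in N}, I introduce the difference envelope
\[
w := P\bigl(P[u](v)-P(g_u,v)\bigr)\le 0
\]
and aim to verify (i) $(dd^c w)^n=0$ and (ii) $w\in\Ker$. Granted these, Theorem~\ref{thm: uniqueness in N}(1) applied to $w$ and $0$ in $\Ker(0)=\Ker$ (with $w\preceq 0$ trivially) forces $w\ge 0$, hence $w=0$, yielding the rooftop equality.

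Claim (i) follows immediately from Lemma~\ref{lem: MA of w is zero}, noting that $u\in\Ker(H_1)\subset\E$ and $v\in\PSHM{\Omega}$ (since $v\le H_2\le 0$). The substance of the theorem is thus absorbed into (ii), and this is where the hypothesis $P[H_1](H_2)=P(g_{H_1},H_2)$ has to be activated.

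For (ii), fix $\varphi_1,\varphi_2\in\Ker$ with $H_i+\varphi_i\le\xi_i\le H_i$ where $\xi_1=u$ and $\xi_2=v$. Since $\varphi_i\le 0$, pointwise comparison yields
\[
\min(u+C,v)\ge \min(H_1+C+\varphi_1,\,H_2+\varphi_2)\ge \min(H_1+C,H_2)+\varphi_1+\varphi_2,
\]
and taking PSH envelopes (note $\varphi_1+\varphi_2$ is PSH) gives $P(u+C,v)\ge P(H_1+C,H_2)+\varphi_1+\varphi_2$. Letting $C\to +\infty$ and invoking the hypothesis,
\[
P[u](v)\ge P[H_1](H_2)+\varphi_1+\varphi_2 = P(g_{H_1},H_2)+\varphi_1+\varphi_2.
\]
On the other hand, $u\le H_1$ implies $g_u\le g_{H_1}$, and combined with $v\le H_2$ this gives $P(g_u,v)\le P(g_{H_1},H_2)$. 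Subtracting,
\[
P[u](v)-P(g_u,v)\ge \varphi_1+\varphi_2,
\]
so $w\ge\varphi_1+\varphi_2$. Since $\Ker$ is a convex cone, $\varphi_1+\varphi_2\in\Ker$, and the characterization recalled in Section~\ref{Sec:Preliminaries} (an $\E$ function dominating a sum of $\F$ functions lies in $\Ker$) places $w$ in $\Ker$.

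The main obstacle is step (ii): the assumed rooftop equality lives only at the boundary data $(H_1,H_2)$, and we must propagate it into a PSH lower bound for $P[u](v)-P(g_u,v)$ by a function in $\Ker$. The crucial algebraic observation is that $\varphi_1+\varphi_2$ survives all three operations at play -- the pointwise $\min$ (because both $\varphi_i\le 0$), the PSH envelope (because $\varphi_1+\varphi_2$ is PSH), and the passage $C\to +\infty$ -- so that after the cancellation made possible by the hypothesis it emerges as a PSH lower bound on $w$. Once (i) and (ii) are in place, Theorem~\ref{thm: uniqueness in N} closes the argument.
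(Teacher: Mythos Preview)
Your proof is correct and follows essentially the same route as the paper: define $w=P(P[u](v)-P(g_u,v))$, invoke Lemma~\ref{lem: MA of w is zero} for $(dd^cw)^n=0$, use the hypothesis on $(H_1,H_2)$ to bound $w$ below by a function in $\Ker$, and conclude $w=0$. The only cosmetic differences are that the paper uses the envelopes $P(u-H_1),\,P(v-H_2)$ in place of your $\varphi_1,\varphi_2$, and closes with \cite[Corollary~3.2]{ACCP09} (after noting $w\in\Ker^a$ via $w\ge P(u-g_u)\in\E^a$) rather than with Theorem~\ref{thm: uniqueness in N}; since $(dd^cw)^n=0$ already forces $w\in\E^a$, the two endings are equivalent.
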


\begin{remark}
The equality $P[H_1](H_2) = P(g_{H_1}, H_2)$ is satisfied, particularly when there exists $w \in \E$ such that $H_2 + w \leq H_1$ and $g_w = 0$. This is seen through
\[
P[H_1](H_2) \geq P[H_2 + w](H_2) \geq H_2 + g_w = H_2 \geq P(g_{H_1}, H_2).
\]
\end{remark}

 In other words, Theorem~\ref{thm: RE conj N}  says that if the rooftop equality holds for $H_1,H_2$, then it holds for all $u\in \Ker(H_1)$, $v\in \Ker(H_2)$. In particular, the rooftop equality holds in $\Ker$.
\begin{proof}
For notational convenience, let us write
\[
\varphi=P[u](v), \; \psi=P(g_u,v), \;  w=P(\varphi-\psi).
\]
Then $\varphi\leq \psi$, hence $w\leq 0$. We also have $w\geq P(u-g_u)$. By the assumption $u\in \Ker(H_1)$, $v\in \Ker(H_2)$,  we deduce $P(u-H_1)\in \Ker$ and $P(v-H_2)\in \Ker$, hence
\begin{flalign*}
P(u+C,v)  &\geq P(P(u-H_1)+H_1+C,P(v-H_2)+H_2) \\
&\geq P(u-H_1) + P(v-H_2)+ P(H_1+C,H_2).
\end{flalign*}
Letting $C\nearrow+\infty$, we arrive at
\begin{flalign*}
    P[u](v)  &\geq   P(u-H_1) + P(v-H_2) + P[H_1](H_2) \\
    &= P(u-H_1)+ P(v-H_2)+ P(g_{H_1},H_2)\\
    &\geq P(u-H_1)+ P(v-H_2) + P(g_u,v).
\end{flalign*}
From this we get $w\geq P(u_1-H_1)+P(v-H_2)$, hence $w\in \Ker$. Since, by Theorem~\ref{thm: MA measure of asymptotic envelope}, $P(u-g_u)\in \E^a$, we thus infer  $w\in \Ker^a$. It  follows from Lemma \ref{lem: MA of w is zero} that $(dd^c w)^n=0$, hence by uniqueness in $\Ker^a$ (see \cite[Corollary 3.2]{ACCP09}), $w=0$, ultimately giving $\varphi=\psi$.
\end{proof}

Using the same ideas as above we now prove that the rooftop envelope commutes with the Green-Poisson residual operator $g$, provided that the same identity holds for the boundary values.

\begin{theorem}\label{thm: green of rooftop new}
    Let $H_1, H_2 \in \E$ be such that
    \[
    g_{P(H_1, H_2)} = P(g_{H_1}, g_{H_2}).
    \]
    Then, for any $u \in \Ker(H_1)$ and $v \in \Ker(H_2)$, it holds that $g_{P(u, v)} = P(g_u, g_v)$.
\end{theorem}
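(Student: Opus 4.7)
The plan is to establish the two inequalities separately. The inequality $g_{P(u,v)} \leq P(g_u, g_v)$ follows from the monotonicity of the residual function (if $a \leq b$ then $P(a+C, 0) \leq P(b+C, 0)$, and hence $g_a \leq g_b$) applied to $P(u,v) \leq u$ and $P(u,v) \leq v$, combined with the maximality of the rooftop.

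For the reverse inequality I would follow the strategy of Theorem~\ref{thm: RE conj N}. Set $\psi := g_{P(u,v)}$ and $\varphi := P(g_u, g_v)$, so that $\psi \leq \varphi$, and let $w := P(\psi - \varphi) \leq 0$. The goal is to show $w \equiv 0$, which forces $\psi = \varphi$. To control the boundary behavior of $w$, I write $u \geq H_1 + \chi_1$ and $v \geq H_2 + \chi_2$ with $\chi_1, \chi_2 \in \Ker$; then $P(u,v) \geq P(H_1, H_2) + \chi_1 + \chi_2$, and applying $g$ with its superadditivity yields
\[
\psi \geq g_{P(H_1, H_2)} + g_{\chi_1 + \chi_2}.
\]
By monotonicity and the standing hypothesis, $\varphi \leq P(g_{H_1}, g_{H_2}) = g_{P(H_1, H_2)}$. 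Combining, $\psi - \varphi \geq g_{\chi_1 + \chi_2}$, hence $w \geq g_{\chi_1 + \chi_2}$. Since $\chi_1 + \chi_2 \in \Ker$ implies $g_{\chi_1 + \chi_2} \in \Ker$ (monotonicity of $\widetilde{\,\cdot\,}$ forces $0 = \widetilde{\chi_1 + \chi_2} \leq \widetilde{g_{\chi_1 + \chi_2}} \leq 0$), we conclude $w \in \Ker$.

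To show $(dd^c w)^n = 0$, Corollary~\ref{cor: NP of P(u-v,0)} gives $\mu_r(w) \leq \mu_r(\psi) = 0$, the last equality by Theorem~\ref{thm: MA measure of asymptotic envelope}. For the singular part, I would mimic Lemma~\ref{lem: MA of w is zero}: setting $D := \{w + \varphi = \psi\}$, the inequality $w + \varphi \leq \psi$ with equality on $D$, combined with the maximum principle of Theorem~\ref{thm: maximum principle} applied to the truncations $\psi_t = \max(\psi, -t)$ and $\varphi_t = \max(\varphi, -t)$, Theorem~\ref{thm: envelope} (localizing $\mu_r(w)$ to $D$), and Xing's theorem applied to suitable quasi-continuous cutoffs, would yield $(dd^c w)^n = 0$. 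Theorem~\ref{thm: uniqueness in N} applied to $w, 0 \in \Ker$ then gives $w \geq 0$, whence $w = 0$ and $\psi = \varphi$.

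The principal obstacle is the singular-part argument in the preceding paragraph, namely verifying that $(dd^c w)^n = 0$. Although the scheme parallels Lemma~\ref{lem: MA of w is zero}, both $\psi$ and $\varphi$ carry nontrivial singular Monge-Amp\`ere measures --- respectively $\mu_s(P(u,v))$ by Theorem~\ref{thm: MA measure of asymptotic envelope} and $\mu_s(\varphi)$ via $\mu_r(\varphi) = 0$ from Corollary~\ref{cor: NP of rooftop} --- and the contact-set/truncation analysis must cancel these contributions exactly. The hypothesis $g_{P(H_1, H_2)} = P(g_{H_1}, g_{H_2})$ enters precisely by placing $\psi$ and $\varphi$ in the common class $\Ker(K)$ with $K = g_{P(H_1, H_2)}$, which is what underlies the $w \in \Ker$ conclusion and enables the final application of uniqueness.
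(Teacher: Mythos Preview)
Your setup (steps 1--3) is sound: the inequality $g_{P(u,v)}\le P(g_u,g_v)$, the definition $w=P(\psi-\varphi)$, and the verification $w\in\Ker$ all go through. Your observation that $\mu_r(w)\le\mu_r(\psi)=0$ via Corollary~\ref{cor: NP of P(u-v,0)} and Theorem~\ref{thm: MA measure of asymptotic envelope} is also correct. The genuine gap, which you yourself flag, is the singular part $\mu_s(w)=0$. Your plan to ``mimic Lemma~\ref{lem: MA of w is zero}'' does not transfer: in that lemma the crucial first step is $w\ge P(u-g_u)\in\E^a$, which immediately forces $\mu_s(w)=0$; the contact-set/truncation analysis there handles only the \emph{regular} part. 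Your lower bound $w\ge g_{\chi_1+\chi_2}$ does not place $w$ in $\E^a$, since $g_{\chi_1+\chi_2}$ carries the pluripolar mass $\mu_s(\chi_1+\chi_2)$, which need not vanish.

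The paper avoids the auxiliary function $w$ altogether and compares $\psi=g_{P(u,v)}$ and $\varphi=P(g_u,g_v)$ directly. The key ingredient you are missing is the sandwich
\[
P(g_u,g_v)+P(u-g_u)+P(v-g_v)\;\le\;P(u,v)\;\le\;P(g_u,g_v),
\]
with $P(u-g_u),P(v-g_v)\in\E^a$ by Theorem~\ref{thm: MA measure of asymptotic envelope}. Via \cite[Lemma~4.4]{ACCP09} this yields $\mu_s(P(u,v))=\mu_s(P(g_u,g_v))$, and since $\mu_r(\varphi)=0$ (Corollary~\ref{cor: NP of rooftop}) and $(dd^c\psi)^n=\mu_s(P(u,v))$ (Theorem~\ref{thm: MA measure of asymptotic envelope}), one obtains $(dd^c\psi)^n=(dd^c\varphi)^n$. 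Together with $g_{P(u,v)}\in\Ker(P(g_u,g_v))$ and $\psi\le\varphi$, Theorem~\ref{thm: uniqueness in N} applies directly. If you insist on your envelope route, the same sandwich gives $\psi-\varphi\ge P(u-g_u)+P(v-g_v)$, hence $w\in\E^a$, closing your gap --- but at that point the detour through $w$ is redundant.
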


The relation $g_{P(H_1,H_2)} = P(g_{H_1},g_{H_2})$ is trivially satisfied when $H_1 = H_2 = 0$. Consequently, this leads to the identity $g_{P(u,v)} = P(g_u,g_v)$ being valid for all $u, v \in \Ker$.

\begin{proof}
    Setting $w=P(u-H_1)+P(v-H_2)\in \Ker$, we have
    \[
    P(u,v) \geq w+ P(H_1,H_2),
    \]
    hence
    \[
    g_{P(u,v)} \geq g_w+ g_{P(H_1,H_2)}=g_w+P(g_{H_1},g_{H_2})\geq g_w+ P(g_u,g_v),
    \]
    which yields $g_{P(u,v)}\in \Ker(P(g_u,g_v))$.
    On the other hand, we have
    \[
    P(g_u,g_v)+P(u-g_u)+P(v-g_v)\leq P(u,v)\leq P(g_u,g_v).
    \]
   Note also that, by Theorem~\ref{thm: MA measure of asymptotic envelope}, $P(u-g_u)\in \E^a$ and $P(v-g_v)\in \E^a$. It thus follows from \cite[Lemma 4.4]{ACCP09} and Theorem \ref{thm: MA measure of asymptotic envelope} that
\[
\mu_s(P(g_u,g_v))=\mu_s(P(u,v))=\mu_s(g_{P(u,v)}).
\]
By Corollary \ref{cor: NP of rooftop} and Theorem \ref{thm: MA measure of asymptotic envelope} we have
\[
\mu_r(P(g_u,g_v))=\mu_r(g_{P(u,v)})=0,
\]
therefore
\[
(dd^c P(g_u,g_v))^n=(dd^c g_{P(u,v)})^n.
\]
   Since $P(g_u,g_v)\ge g_{P(u,v)}$, from Theorem \ref{thm: uniqueness in N} we thus obtain $P(g_u,g_v)=g_{P(u,v)}$.
\end{proof}

\subsection{Geodesic connectivity}\label{subsec4}

We start by revisiting the concept of plurisubharmonic geodesics. Consider a curve of plurisubharmonic functions $t \mapsto u_t$ for $t \in [0,1]$ and $u_t \in \PSH{\Omega}$. This curve defines a function $U$ on $\Omega \times A(1,e)$ as
\[
U(x,z) = u_{\log |z|}(x), \quad x \in \Omega, \; z \in A(1,e),
\]
where $A(1,e)$ denotes the annulus in $\mathbb{C}$ with radii $1$ and $e$. A \emph{subgeodesic segment} $u_t$ is characterized by the plurisubharmonicity of $U$ in $\Omega \times A(1,e)$.

Let $\mathcal{S}(u_0,u_1)$ represent the set of all subgeodesic segments beneath $u_0$ and $u_1$, satisfying
\[
\limsup_{t\to 0} u_t \leq u_0 \quad \text{and} \quad \limsup_{t\to 1} u_t \leq u_1.
\]
The function $u_0+u_1$ is an element of $\mathcal{S}(u_0,u_1)$. Due to convexity, for each $U \in \mathcal{S}(u_0,u_1)$, we have
\[
u(x,z) \leq (1-\log |z|)u_0(x) + \log |z| u_1(x),
\]
where the right-hand side is upper semicontinuous. Consequently, the upper semicontinuous regularization of
\[
(x,z) \mapsto \sup \{U(x,z) \; : \; U \in \mathcal{S}(u_0,u_1)\}
\]
forms a plurisubharmonic subgeodesic below $u_0$ and $u_1$. Remarkably, this function, identified as the largest plurisubharmonic geodesic segment below $u_0$ and $u_1$, does not require further regularization.

\begin{definition}
    Define $u_t, 0 \leq t \leq 1$, as the largest plurisubharmonic geodesic beneath $u_0$ and $u_1$. We say $u_0$ and $u_1$ are \emph{connectable} by a plurisubharmonic geodesic if
    \[
    \lim_{t\to 0} u_t = u_0 \quad \text{and} \quad \lim_{t\to 1} u_t = u_1,
    \]
with the limit understood in terms of $L^1_{\rm loc}$ convergence or capacity convergence, which are equivalent as shown in \cite{Dar15,Rash22}.
\end{definition}

As established in \cite[Theorem 5.2]{Dar17AJM} and \cite[Theorem 8.1]{Rash22}, two functions $u_0, u_1 \in \PSHM{\Omega}$ are geodesically connectable if and only if
\begin{equation}\label{eq: Darvas criterion}
    P[u_0](u_1) = u_1 \quad \text{and} \quad P[u_1](u_0) = u_0.
\end{equation}
We shall now use these conditions for the class $\Ker$, relating them to the residual functions $g_{u_0}$ and $g_{u_1}$ to arrive at our result on geodesic connectivity.

\begin{theorem}\label{thm: geodesic connectivity}
  Given $H_0,H_1 \in \E$ that are connectable by a plurisubharmonic geodesic, and $u_0\in\Ker(H_0)$, $u_1\in\Ker(H_1)$, then $u_0$ and $u_1$ can be connected by a  plurisubharmonic geodesic segment if and only if
\begin{equation}
    \label{eq: geo connectivity subsec}
    u_0\leq g_{u_1}\quad \text{ and }\quad u_1\leq g_{u_0}.
\end{equation}
   In particular, if $g_{H_0}=g_{H_1}$, then \eqref{eq: geo connectivity subsec} is equivalent to $g_{u_0}=g_{u_1}$.
\end{theorem}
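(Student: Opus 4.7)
\medskip\noindent\textbf{Plan.}
My strategy combines Darvas' criterion~\eqref{eq: Darvas criterion} with the rooftop equality of Theorem~\ref{thm: RE conj N} for the main equivalence, and then exploits the structure of the singular Monge--Amp\`ere measure together with the idempotency provided by Theorem~\ref{thm:idempotenceH} for the \emph{In particular} statement.

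For the main \emph{iff}, I would invoke Darvas' criterion \eqref{eq: Darvas criterion} on the assumed connectability of $H_0, H_1$, obtaining $P[H_0](H_1)=H_1$. Since $P[H_0](H_1)\leq P[H_0](0)=g_{H_0}$, this forces $H_1\leq g_{H_0}$, whence $P(g_{H_0},H_1)=H_1=P[H_0](H_1)$; the symmetric identity holds for $(H_1,H_0)$. Both ordered pairs thus satisfy the hypothesis of Theorem~\ref{thm: RE conj N}, and applying it yields, for every $u_0\in\Ker(H_0)$ and $u_1\in\Ker(H_1)$,
\[
P[u_0](u_1)=P(g_{u_0},u_1),\qquad P[u_1](u_0)=P(g_{u_1},u_0).
\]
Since $P(f,w)=w$ for $w\in\PSH{\Omega}$ precisely when $w\leq f$, Darvas' criterion applied to $u_0,u_1$ then translates exactly into \eqref{eq: geo connectivity subsec}.

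For the \emph{In particular} statement under $g_{H_0}=g_{H_1}$, the implication $g_{u_0}=g_{u_1}\Rightarrow\eqref{eq: geo connectivity subsec}$ is immediate from $u_i\leq g_{u_i}$. Conversely, assume~\eqref{eq: geo connectivity subsec}. Applying Cegrell's Lemma~4.1 in~\cite{ACCP09} to $u_0\leq g_{u_1}$ gives $\mu_s(g_{u_1})\leq\mu_s(u_0)$; combining this with $\mu_s(g_u)=\mu_s(u)$ (a consequence of Theorem~\ref{thm: MA measure of asymptotic envelope}, since $\mu_r(g_u)=0$ and $(dd^cg_u)^n=\mu_s(u)$), I obtain $\mu_s(u_1)\leq\mu_s(u_0)$. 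Symmetrically $\mu_s(u_0)=\mu_s(u_1)$, so $(dd^cg_{u_0})^n=(dd^cg_{u_1})^n$. Following the argument in the proof of Theorem~\ref{thm:idempotenceH}, I would verify that $g_{u_0},g_{u_1}\in\Ker(g_{H_0})$ using $g_{H_0}=g_{H_1}$. Then, invoking Theorem~\ref{thm:idempotenceH} to obtain the idempotency $g_{g_{u_i}}=g_{u_i}$, and applying the monotonicity of $g$ to $u_0\leq g_{u_1}$, I get $g_{u_0}\leq g_{g_{u_1}}=g_{u_1}$; by symmetry, $g_{u_0}=g_{u_1}$.

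The main obstacle is securing the hypothesis $g_H=g_{g_H}$ needed to apply Theorem~\ref{thm:idempotenceH} with $H=g_{H_0}=g_{H_1}$. I would establish this by comparing $g_{H_0}$ and $g_{g_{H_0}}$ through Theorem~\ref{thm: uniqueness in N}: they share the Monge--Amp\`ere measure $\mu_s(H_0)=(dd^cg_{H_0})^n=(dd^cg_{g_{H_0}})^n$, satisfy the comparability $g_{H_0}\leq g_{g_{H_0}}$, and, using the connectability of $H_0, H_1$, one can place both in a common Cegrell class with boundary values derived from $g_{H_0}$, so that uniqueness forces equality.
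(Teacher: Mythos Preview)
Your argument for the main equivalence is correct and matches the paper's: both derive $P[H_0](H_1)=P(g_{H_0},H_1)$ from the Darvas criterion, feed this into Theorem~\ref{thm: RE conj N}, and read off \eqref{eq: geo connectivity subsec}.

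For the \emph{In particular} statement, however, your route has a genuine gap at precisely the point you flag as the ``main obstacle''. You need the idempotency $g_{H_0}=g_{g_{H_0}}$ in order to apply Theorem~\ref{thm:idempotenceH} and conclude $g_{g_{u_1}}=g_{u_1}$, which in turn is what gives you the comparability $g_{u_0}\leq g_{u_1}$. But your proposed verification of this idempotency is not complete: to invoke Theorem~\ref{thm: uniqueness in N} for the pair $g_{H_0}\leq g_{g_{H_0}}$ you must exhibit a common class $\Ker(H')$ containing both, and neither the connectability of $H_0,H_1$ nor the hypothesis $g_{H_0}=g_{H_1}$ supplies such an $H'$. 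Indeed, showing $g_{H_0}\in\Ker(g_{g_{H_0}})$ would require $g_{H_0}\geq g_{g_{H_0}}+\varphi$ for some $\varphi\in\Ker$, which is essentially equivalent to the idempotency you are trying to prove. Note also that your computation $(dd^cg_{u_0})^n=(dd^cg_{u_1})^n$, while correct, is not put to use: without comparability of $g_{u_0}$ and $g_{u_1}$ it does not feed into Theorem~\ref{thm: uniqueness in N}.

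The paper sidesteps idempotency entirely by introducing the rooftop $v=P(g_{u_0},g_{u_1})$. The point is that $v$ is automatically comparable to both $g_{u_0}$ and $g_{u_1}$ (it lies below each), and the hypothesis $u_0\leq g_{u_1}$ together with $u_0\leq g_{u_0}$ gives $u_0\leq v\leq g_{u_0}$, hence $\mu_s(v)=\mu_s(u_0)=\mu_s(g_{u_0})$. Since $\mu_r(v)=0$ by Corollary~\ref{cor: NP of rooftop}, one gets $(dd^cv)^n=(dd^cg_{u_0})^n$, and now Theorem~\ref{thm: uniqueness in N} applies directly to $v\preceq g_{u_0}$ in $\Ker(g_{H_0})$, yielding $v=g_{u_0}$; symmetrically $v=g_{u_1}$. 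This use of the rooftop to manufacture comparability is the missing idea in your approach.
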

\begin{proof}
Starting with (\ref{eq: Darvas criterion}), we observe that
\[
P[H_0](H_1) = H_1 = P(g_{H_0},H_1) \quad \text{and} \quad P[H_1](H_0) = H_0 = P(H_0,g_{H_1}).
\]
Invoking Theorem \ref{thm: RE conj N}, it follows that
\[
P[u_0](u_1) = P(g_{u_0},u_1) \quad \text{and} \quad P[u_1](u_0) = P(u_0,g_{u_1}).
\]
Thus the condition (\ref{eq: geo connectivity subsec}) is equivalent to (\ref{eq: Darvas criterion}), which is equivalent to the connectivity of $u_0$ and $u_1$ through a plurisubharmonic geodesic.

Furthermore, if $g_{H_0} = g_{H_1}$, then $g_{u_0}, g_{u_1},$ and $v = P(g_{u_0}, g_{u_1})$, all belong to $\Ker(g_{H_0})$. Since $u_0 \leq v \leq g_{u_0}$, we deduce that
\[
\mu_s(v) = \mu_s(u_0) = \mu_s(g_{u_0}).
\]
Corollary~\ref{cor: NP of rooftop} implies $\mu_r(v) = 0$, leading to $(dd^c v)^n = (dd^c g_{u_0})^n$. According to Theorem \ref{thm: uniqueness in N}, this results in $v = g_{u_0}$. Interchanging $u_0$ and $u_1$, we similarly conclude $v = g_{u_1}$, ultimately showing that $g_{u_0} = g_{u_1}$.
\end{proof}

\section{Boundary Values in the Cegrell Classes}\label{Sec: BV}

Compared to the well-explored domain of boundary values for subharmonic, convex, or holomorphic functions, the investigation into the boundary values of plurisubharmonic functions remains less developed. Cegrell was inspired by Riesz’s decomposition theorem, which asserts that any non-positive subharmonic function on a bounded domain can be decomposed into the sum of a Green potential and a harmonic function. The smallest harmonic majorant of the Green potential is zero, and the behavior of the harmonic function near the boundary determines it. Consequently, the boundary values of a subharmonic function can be understood as the harmonic function in Riesz’s decomposition theorem. However, direct generalization of this decomposition to pluripotential theory is not possible. Instead, as elaborated in~\cite{Ceg98,Ceg08} (see Section~\ref{Sec:Preliminaries} for details), we examine the Cegrell class $\Ker(H)$, which consists of functions with boundary values  $H$, satisfying the inequality:
\begin{equation}
H \geq u \geq \varphi + H,
\end{equation}
for some function $\varphi\in\Ker$.

Theorem~\ref{bvinN} ensures that a function $u \in \E$, satisfying specific integrability criteria, is in $\Ker(\tilde u)$, which means it possesses boundary values represented by $\tilde u$.

\begin{theorem}\label{bvinN}
Assume $u\in\E$ and there exists a function $w\in\mathcal{PSH}^-(\Omega)$, $w< 0$, such that
\[
\int_{\Omega}(-w)(dd^cu)^n<+\infty.
\]
Then $u\in \Ker(\tilde u)$.
\end{theorem}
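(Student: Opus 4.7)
The goal is to exhibit $\varphi \in \Ker$ with $\varphi + \tilde u \leq u$; combined with the automatic inequality $u \leq \tilde u$, this provides the sandwich in Definition~\ref{Prel: CC} needed to conclude $u \in \Ker(\tilde u)$.

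My plan is to construct $\varphi$ as a limit of solutions to auxiliary Monge-Amp\`ere problems with source measures truncated so as to exploit the integrability hypothesis. For each $\varepsilon > 0$, set $\mu_\varepsilon := \mathbf{1}_{\{-w > \varepsilon\}}(dd^c u)^n$. The total mass is finite, $\mu_\varepsilon(\Omega) \leq \varepsilon^{-1}\int_\Omega(-w)(dd^c u)^n < +\infty$, and for any $\chi \in \Eo$ one has $\int(-\chi)\,d\mu_\varepsilon \leq \|\chi\|_\infty\, \mu_\varepsilon(\Omega) < +\infty$; Cegrell's existence theorem then furnishes $\psi_\varepsilon \in \F$ solving $(dd^c \psi_\varepsilon)^n = \mu_\varepsilon$. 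Since the $\mu_\varepsilon$ increase as $\varepsilon \searrow 0$, the comparison principle in $\F$ forces the $\psi_\varepsilon$ to decrease, and I set $\psi := (\lim_{\varepsilon \to 0}\psi_\varepsilon)^*$. By Cegrell's continuity under decreasing sequences and the lower semicontinuity of $-w$, the limit inherits $(dd^c\psi)^n = (dd^c u)^n$ and $\int(-w)(dd^c\psi)^n \leq \int(-w)(dd^c u)^n < +\infty$.

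To show $\psi \in \Ker$, I would select a subsequence $\varepsilon_k \searrow 0$ along which $\sum_k 2^{-k}(-\psi_{\varepsilon_k})$ converges locally in $L^1$, producing a psh lower bound of the form $\sum_k \phi_k$ with $\phi_k \in \F$, and hence $\psi \in \Ker$ via the characterization recalled after Definition~\ref{Prel: CC}. With $\psi \in \Ker$ established, I would compare $\psi + \tilde u$ and $u$ using Theorem~\ref{thm: uniqueness in N}: the maximality of $\tilde u$ yields $(dd^c(\psi + \tilde u))^n \geq (dd^c \psi)^n + (dd^c \tilde u)^n = (dd^c u)^n$, while the inequality $\psi + \tilde u \leq \tilde u$ together with the fact that $\psi$ carries the same Monge-Amp\`ere mass as $u$ should furnish the singularity comparability $u \preceq \psi + \tilde u$. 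The uniqueness theorem then gives $u \geq \psi + \tilde u$, and $\varphi := \psi$ completes the construction.

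The main obstacle is the first half of the third step: ensuring that the decreasing limit $\psi$ remains in $\Ker$ and does not lose its generalized boundary behaviour. The only quantitative tool is the uniform weighted estimate $\int(-w)(dd^c\psi_\varepsilon)^n \leq \int(-w)(dd^c u)^n$, and one must convert this into the required summable $\F$-decomposition from below. A secondary subtlety is that $w$ is merely in $\PSHM{\Omega}$ rather than in $\Eo$, so Cegrell's weighted existence theorem cannot be applied directly to $(dd^c u)^n$; the truncation by $\mathbf{1}_{\{-w > \varepsilon\}}$ provides a workaround at the cost of an extra limiting argument. Finally, verifying the singularity comparability needed for Theorem~\ref{thm: uniqueness in N} when $(dd^c u)^n$ charges pluripolar sets is delicate, and may require a preliminary decomposition of $u$ and $\psi$ into non-pluripolar and pluripolar parts along the lines of Theorem~\ref{thm: decomposition}.
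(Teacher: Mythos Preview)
Your strategy---build $\psi\in\Ker$ with $(dd^c\psi)^n=(dd^c u)^n$ and then compare $u$ with $\psi+\tilde u$---has the right shape, but two steps do not go through.

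\textbf{The final comparison is circular.} To invoke Theorem~\ref{thm: uniqueness in N} for the pair $u$ and $\psi+\tilde u$ you need both to lie in $\Ker(H)$ for a common $H$; the only candidate is $H=\tilde u$, and $u\in\Ker(\tilde u)$ is precisely the conclusion you are after. Your remark that ``$\psi$ carries the same Monge--Amp\`ere mass as $u$'' does not yield $u\preceq\psi+\tilde u$ either: equality of Monge--Amp\`ere measures does not imply comparability of singularities (this is the whole difficulty addressed in Section~\ref{sect: uniqueness}), and since $\psi$ is unbounded you cannot get $u\le \psi+\tilde u+C_K$ from $u\le\tilde u$ alone.

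\textbf{The argument for $\psi\in\Ker$ is the wrong way round.} Because $\psi_{\varepsilon_k}\searrow\psi$, each $\psi_{\varepsilon_k}\ge\psi$, hence $\sum_k 2^{-k}\psi_{\varepsilon_k}\ge\psi$. Your series therefore gives an \emph{upper} bound, not the lower bound $\psi\ge\sum_j\phi_j$, $\phi_j\in\F$, required by the characterization of $\Ker$. A separate issue: your monotonicity of $\psi_\varepsilon$ rests on a comparison principle in $\F$ that fails when the measures charge pluripolar sets, so you must either construct the $\psi_\varepsilon$ inductively above one another (using a subsolution at each step) or abandon monotonicity.

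The paper avoids both obstacles. In Theorem~\ref{thm: sol in N} it splits $(dd^c u)^n$ over $\Omega_j$ and $\Omega\setminus\Omega_j$, solving to obtain $\psi_j\in\F$ and a ``tail'' $v_j\in\E$ with $v_j\nearrow 0$ (this is where the weighted hypothesis enters, via $\int(-v_j)^n(dd^c w)^n\le n!\int_{\Omega\setminus\Omega_j}(-w)(dd^c u)^n\to 0$). The resulting $\varphi\in\Ker$ satisfies $u\le\varphi$ and $(dd^c\varphi)^n=(dd^c u)^n$. Then Lemma~\ref{thm: from sol to boundary} finishes without Theorem~\ref{thm: uniqueness in N}: since $\varphi\le 0$ one has $u\le u-\varphi$, so $u\le P(u-\varphi)=:w$, while $w+\varphi\le u$; Lemma~\ref{prop: P(u-v) is in Ea} gives $(dd^c w)^n=0$, so $w$ is maximal, and the sandwich $w+\varphi\le u\le w$ forces $\tilde u=w$. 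Thus $u\ge\tilde u+\varphi$ with $\varphi\in\Ker$, and no circular appeal to $u\in\Ker(\tilde u)$ is needed.
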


We initiate with the proof of an existence and uniqueness result for the complex Monge-Ampère equation. The existence part was first established in~\cite[Proposition 4.3]{Hiep14}. The novelty herein lies in the uniqueness part, which is achieved through Theorem~\ref{thm: uniqueness in N}.

\begin{theorem}\label{thm: sol in N}
	Assume $u\in \mathcal E$ and $w\in \mathcal E_0$ are such that $-1\leq w<0$ and
	\[
	\int_{\Omega} (-w) (dd^c u)^n <+\infty.
	\]
	Then there exists a unique $\varphi \in \mathcal N$ such that $u\leq \varphi$ and $(dd^c \varphi)^n =(dd^c u)^n$.
\end{theorem}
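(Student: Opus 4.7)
The plan is to treat existence and uniqueness separately: the former is already established in \cite[Proposition~4.3]{Hiep14}, while the latter is the genuinely new content enabled by Theorem~\ref{thm: uniqueness in N}. For existence one fixes a fundamental sequence $\Omega_j\nearrow\Omega$, sets $\mu_j={\bf 1}_{\Omega_j}(dd^cu)^n$, and uses Cegrell's existence theorem to produce $\varphi_j\in\F$ with $(dd^c\varphi_j)^n=\mu_j$. A standard comparison argument yields $\varphi_j\geq u$ and that $\varphi_j$ decreases in $j$; the hypothesis $\int_{\Omega}(-w)(dd^cu)^n<+\infty$ prevents the limit $\varphi=\lim_j\varphi_j$ from collapsing to $-\infty$ and places it in $\Ker$, and Cegrell's continuity of the Monge--Amp\`ere operator along decreasing sequences then delivers $(dd^c\varphi)^n=(dd^cu)^n$.

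For uniqueness I would argue as follows. Suppose $\varphi_1,\varphi_2\in\Ker$ both satisfy $u\leq\varphi_i$ and $(dd^c\varphi_i)^n=(dd^cu)^n$, and set $\psi:=\max(\varphi_1,\varphi_2)$. Since $\psi\leq 0$ and $\psi\geq\varphi_1$, the smallest maximal plurisubharmonic majorant satisfies $\widetilde\psi\geq\widetilde{\varphi_1}=0$, hence $\widetilde\psi=0$ and $\psi\in\Ker$. The Bedford--Taylor--Demailly maximum principle, obtained by approximating with bounded truncations $\max(\varphi_i,-k)$ and using Cegrell's convergence theorem (for the non-pluripolar part this is Theorem~\ref{thm: maximum principle}), then gives
\[
(dd^c\psi)^n\;\geq\;{\bf 1}_{\{\varphi_1\geq\varphi_2\}}(dd^c\varphi_1)^n+{\bf 1}_{\{\varphi_1<\varphi_2\}}(dd^c\varphi_2)^n\;=\;(dd^cu)^n\;=\;(dd^c\varphi_1)^n,
\]
since the two measures on the right coincide with $(dd^cu)^n$ by assumption. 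The inclusion $\varphi_1\leq\psi\leq 0$ forces $\varphi_1\preceq\psi$ trivially, so Theorem~\ref{thm: uniqueness in N} applied with $H=0\in\E$ to the pair $\varphi_1,\psi\in\Ker(0)=\Ker$ yields $\varphi_1\geq\psi$, hence $\varphi_1=\psi\geq\varphi_2$. Swapping the roles of $\varphi_1$ and $\varphi_2$ gives $\varphi_2\geq\varphi_1$, completing the proof.

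The main obstacle is precisely the maximum principle for the full Monge--Amp\`ere measure in the class $\Ker$: Theorem~\ref{thm: maximum principle} covers only the non-pluripolar part, while the pluripolar piece must be recovered by truncation to the bounded class and a limiting argument using the continuity of Monge--Amp\`ere along decreasing sequences. Once this is in place the application of Theorem~\ref{thm: uniqueness in N} is immediate, and it is exactly this step that bypasses the integrability condition required in the earlier \cite[Theorem~3.6]{ACCP09}, thereby yielding the novelty of the statement.
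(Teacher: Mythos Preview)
Your existence sketch is fine and matches the paper's (both defer to \cite[Proposition~4.3]{Hiep14}).

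The uniqueness argument has a genuine gap. You need the Demailly-type inequality
\[
(dd^c\max(\varphi_1,\varphi_2))^n \;\geq\; {\bf 1}_{\{\varphi_1\geq\varphi_2\}}(dd^c\varphi_1)^n + {\bf 1}_{\{\varphi_1<\varphi_2\}}(dd^c\varphi_2)^n
\]
for the \emph{full} Monge--Amp\`ere measure in $\E$, and this fails on the singular part. For instance, in $\mathbb{C}^2$ take $\varphi_1=\max(\log|z_1|,2\log|z_2|)$ and $\varphi_2=\max(2\log|z_1|,\log|z_2|)$: both have $(dd^c\varphi_i)^2=2(2\pi)^2\delta_0$, yet $\max(\varphi_1,\varphi_2)=\max(\log|z_1|,\log|z_2|)$ has Monge--Amp\`ere mass only $(2\pi)^2\delta_0$, strictly smaller. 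Theorem~\ref{thm: maximum principle} is stated for $\mu_r$ precisely because the analogous inequality for $\mu_s$ is false. Your proposed fix---truncating to bounded functions and passing to the limit---cannot recover the pluripolar part: the truncations $\max(\varphi_i,-k)$ carry no singular mass, and the indicator functions $\{\varphi_1^k\geq\varphi_2^k\}$ do not converge in any way that would let you pass the inequality to the limit on $\{\varphi_1=\varphi_2=-\infty\}$.

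The paper's proof goes \emph{below} rather than above: it takes the rooftop $w=P(\varphi,\psi)\leq\min(\varphi,\psi)$. The point is that $w$ is then sandwiched, $u\leq w\leq\varphi$, so \cite[Lemma~4.1]{ACCP09} forces the singular parts to agree, $\mu_s(w)=\mu_s(u)=\mu_s(\varphi)=\mu_s(\psi)$, while Corollary~\ref{cor: NP of rooftop} controls the regular part, $\mu_r(w)\leq\mu_r(\varphi)$. Together these give $(dd^cw)^n\leq(dd^c\varphi)^n$ and $(dd^cw)^n\leq(dd^c\psi)^n$, and Theorem~\ref{thm: uniqueness in N} (applied twice, with $w\preceq\varphi$ and $w\preceq\psi$) yields $w=\varphi=\psi$. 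Going below is what makes the singular part tractable; going above with $\max$ does not.
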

\begin{proof} Consider a fundamental sequence $[\Omega_j]$ of $\Omega$ and define
\[
u_j = P({\bf 1}_{\Omega_j} u).
\]
We have $u_j \in \F$ and $(dd^c u_j)^n \geq {\bf 1}_{\Omega_j} (dd^c u)^n$. According to \cite[Theorem 4.14, (2)]{ACCP09}, there exists a decreasing sequence $[\psi_j] \subset \F$ satisfying
\[
(dd^c \psi_j)^n =  {\bf 1}_{\Omega_j} (dd^c u)^n.
\]
For a fixed $j$, and for each $k > j$, \cite[Theorem 4.14, (2)]{ACCP09} ensures the existence of $v_{j,k} \in \F$ such that
\[
(dd^c v_{j,k})^n = {\bf 1}_{\Omega_k \setminus \Omega_j} (dd^c u)^n,  \; u\leq v_{j,k+1}\leq v_{j,k}.
\]
By \cite[Lemma 3.5]{ACCP09}, we have
\[
\int_{\Omega} (-v_{j,k})^n (dd^c w)^n \leq n! \int_{\Omega} (-w) (dd^c v_{j,k})^n \leq n! \int_{\Omega \setminus \Omega_j} (-w) (dd^c u)^n.
\]
As $k\to+\infty$, $v_{j,k}$ decreases to $v_j \geq u$. Hence, $v_j \in \E$, and
\begin{equation}\label{eq: sol in N construction v}
(dd^c v_j)^n =  {\bf 1}_{\Omega \setminus \Omega_j} (dd^c u)^n, \; \int_{\Omega} (-v_j)^n (dd^c w)^n \leq n! \int_{\Omega\setminus \Omega_j} (-w) (dd^c u)^n.
\end{equation}
Having $v_j$, we repeat the above procedure to define $v_{j+1,k}\geq v_j, k=1,2,...$, and thus construct an increasing sequence $[v_j]\subset \E$ satisfying \eqref{eq: sol in N construction v}. Given the finiteness of $\int_{\Omega} (-w) (dd^c u)^n$, the Lebesgue dominated convergence theorem implies $v_j \nearrow 0$ almost everywhere.

Now, considering $v_j + \psi_j \in \E$ and $(dd^c (v_j+\psi_j))^n \geq (dd^c u)^n$, \cite[Theorem 4.14 (2)]{ACCP09} guarantees the existence of $\varphi \in \E$ with $(dd^c \varphi)^n=(dd^c u)^n$ and $\varphi \geq v_j + \psi_j$. Hence,
\[
\tilde{\varphi} \geq \tilde{v_j} + \tilde{\psi_j} \geq v_j,
\]
where $\psi_j \in \F \subset \Ker$. Letting $j \to +\infty$, we conclude $\varphi \in \mathcal N$, as desired.

Finally, we aim to establish the uniqueness of $\varphi$. Suppose $\psi\in \Ker$ with $u\leq \psi$ and $(dd^c \psi)^n =(dd^c u)^n$. Define $w=P(\varphi,\psi)$; then $w\in \Ker$ and $u\leq w \leq \min(\varphi,\psi)$. By \cite[Lemma 4.1]{ACCP09}, we obtain
\[
{\bf 1}_{\{w=-\infty\}}(dd^c w)^n  = {\bf 1}_{\{u=-\infty\}} (dd^c u)^n =  {\bf 1}_{\{\varphi=-\infty\}} (dd^c \varphi)^n = {\bf 1}_{\{\psi=-\infty\}} (dd^c \psi)^n.
\]
Further, according to Corollary \ref{cor: NP of rooftop}, $\mu_r(w)\leq \mu_r(\varphi)=\mu_r(\psi)$. Consequently, we have $(dd^c w)^n \leq (dd^c \varphi)^n$ and $(dd^c w)^n\leq (dd^c \psi)^n$. Applying Theorem \ref{thm: uniqueness in N}, we conclude that $w=\varphi=\psi$.
\end{proof}

\begin{lemma} \label{thm: from sol to boundary}
If $u\in \E$, $v\in \Ker$, $u\leq v$, and $(dd^c u)^n =(dd^c v)^n$, then $u\in \Ker(\tilde{u})$.
\end{lemma}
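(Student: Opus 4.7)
My plan is to exhibit $\varphi=v$ as the witness for the class $\Ker(\tilde u)$; that is, I will prove $v+\tilde u\le u$ on $\Omega$. The central auxiliary object is the plurisubharmonic envelope $\Phi:=P(u-v,0)$, which equals $P(u-v)$ because $u\le v$ forces $\min(u-v,0)=u-v$.

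Since $u\le v$ we have $u\preceq v$, and $(dd^c u)^n=(dd^c v)^n$ supplies $(dd^c u)^n\le(dd^c v)^n$. The equality of singular parts $\mu_s(u)=\mu_s(v)$ is automatic, because the pluripolar part of the common measure $(dd^c u)^n$ is carried simultaneously by $\{u=-\infty\}$ and $\{v=-\infty\}$, with the latter included in the former. Consequently the hypotheses of both Lemma~\ref{prop: P(u-v) is in Ea} and Lemma~\ref{prop : P(u-v)} are met, yielding $\Phi\in\E^a$ and $(dd^c\Phi)^n=0$. The function $u$ itself is admissible in the supremum defining $\Phi$ (since $u\le u-v$ whenever $v\le 0$), so $\Phi\ge u$, while by the envelope's defining property $\Phi\le u-v$ quasi-everywhere, hence $\Phi+v\le u$ quasi-everywhere and therefore everywhere by plurisubharmonicity of both sides.

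The decisive observation is that $\Phi$ is a maximal plurisubharmonic function: within $\E$, the identity $(dd^c\Phi)^n=0$ is Cegrell's characterization of maximality. Thus $\Phi$ is a maximal plurisubharmonic majorant of $u$, and since $\tilde u$ is by definition the smallest such, $\tilde u\le\Phi$. Adding $v$ and combining with $\Phi+v\le u$ one obtains $v+\tilde u\le u$; because $v\in\Ker$, this is exactly the defining inequality for $u\in\Ker(\tilde u)$.

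The main point requiring care is the invocation of Cegrell's equivalence ``$(dd^c\Phi)^n=0\Longleftrightarrow\Phi$ maximal'' inside $\E$; this is a classical result which applies \emph{a fortiori} to $\Phi\in\E^a$, so it is not a genuine obstacle. The remaining verifications---the envelope inequalities $\Phi\ge u$ and $\Phi+v\le u$, together with the identification of singular parts---parallel computations already performed in the proofs of Lemmas~\ref{prop: P(u-v) is in Ea} and~\ref{prop : P(u-v)}.
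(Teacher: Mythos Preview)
Your proof is correct and follows essentially the same approach as the paper: both arguments set $\Phi=P(u-v)$, use Lemmas~\ref{prop: P(u-v) is in Ea} and~\ref{prop : P(u-v)} to conclude $\Phi\in\E^a$ with $(dd^c\Phi)^n=0$, and then exploit maximality of $\Phi$ together with the sandwich $\Phi+v\le u\le\Phi$. The paper pushes slightly further to deduce $\tilde u=\Phi$ via the concavity of $u\mapsto\tilde u$, whereas you only extract the inequality $\tilde u\le\Phi$ needed for the conclusion; this is a minor stylistic difference, not a substantive one.
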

\begin{proof}
By Lemma~\ref{prop: P(u-v) is in Ea}, the function $w=P(u-v)$ is in $\E^a$ and satisfies $(dd^c w)^n=0$, implying $\tilde{w}=w$.  Since $w+v\leq u\leq w$, applying the concavity of $u \mapsto \tilde{u}$ yields
\[
\tilde{w}=\tilde{w}+ \tilde{v} \leq  \tilde{u}\leq \tilde{w}.
\]
Thus, $\tilde{u}=\tilde{w}=w$ and $u$ is in $\Ker(\tilde{u})$.
\end{proof}

We are now prepared to present a proof of Theorem~\ref{bvinN}. This proof demonstrates the existence of boundary values for specific functions within $\E$, which has been the central focus of this section.

\begin{proof}[Proof of Theorem~\ref{bvinN}]
From Theorem \ref{thm: sol in N} and the assumption it follows that there exists $v\in \Ker$ such that $u\leq v$ and $(dd^c v)^n=(dd^c u)^n$. Lemma~\ref{thm: from sol to boundary} then ensures that $u\in \Ker(\tilde{u})$ as desired.
\end{proof}

\section{Core Open Problems in the Cegrell Classes}\label{sect: Open Problems}
  Let $\mu$ be a non-negative Radon measure defined on a bounded hyperconvex domain in $\C^n$, $n \geq 2$, and consider the Dirichlet problem for the complex Monge-Ampère equation:
\begin{equation}
\left\{
\begin{aligned}
&(dd^c u)^n = \mu, \label{eq:MA} \\
&u \in \Ker(H), \nonumber
\end{aligned}
\right.
\end{equation}
noting that the boundary values are implicitly defined within $\Ker(H)$.  In light of our recent progress, as detailed in
Theorem~\ref{thm: uniqueness in N}, this section aims to reignite the interest in the core open problems of the Cegrell classes and enhance the greater understanding in the field. We organize our discussion around the following topics:

\medskip
\begin{enumerate}
  \item Background
  \item Existence and uniqueness of solutions
  \item Boundary values
\end{enumerate}

\medskip

\noindent $(1)$ \emph{Background:} For equation~(\ref{eq:MA}) to be well-posed, it is necessary for $u$ to be in $\mathcal{E}$. A classic example of a plurisubharmonic function not in $\mathcal{E}$ is $\log|z_2|$. Furthermore, as demonstrated in \cite[Example 4.6]{AhagCegrellPham}, there exists a function $u \in \mathcal{PSH}^-(\Omega)$ satisfying $u>-\infty$ yet not belonging to $\mathcal{E}$. For $n=2$, it is established that $\mathcal{E} = \mathcal{PSH}^- \cap W^{1,2}_{\text{loc}}(\Omega)$ \cite{Blo04}. Additional characterizations of $\mathcal{E}$ can be found in \cite{Blo06, CKZ05}. A characterization of $\mathcal{F}$ is available in \cite{Benel09}, and for $\Ker$, in Section~\ref{Sec:Preliminaries}.

It is noteworthy that Kiselman \cite{Kis84} defined the complex Monge-Ampère operator using the multiplication of distributions in the sense of Colombeau. The potential for applying more contemporary distribution theory to define the complex Monge-Ampère operator remains an area yet to be explored.

\medskip

\noindent $(2)$ \emph{Existence and uniqueness of solutions:} In the context of boundary values satisfying $(dd^c H)^n \leq \mu$, it is inferred from~\cite{ACCP09} that $H=0$ can be assumed without loss of generality. The condition  that $(dd^c H)^n \leq \mu$ is trivially fulfilled if $H$ is a maximal plurisubharmonic function in $\mathcal{E}$~\cite{Blo04,BloM,Ceg09}. In classical potential theory, a positive measure $\mu$ is equal to the Laplacian of a negative subharmonic function if and only if
\begin{equation}\label{eq: Open Condition}
\int (-w)\mu < +\infty,
\end{equation}
for some negative subharmonic function $w$. Drawing inspiration from this classical potential theory, Cegrell demonstrated in~\cite{Ceg08} that if $\mu$ is null on all pluripolar sets and there exists a $w \in \mathcal{PSH}(\Omega)$, with $w < 0$, satisfying~(\ref{eq: Open Condition}), then a unique solution to~(\ref{eq:MA}) in $\Ker^a$ is guaranteed. However, Cegrell also illustrated through an example that such an equivalence does not transfer to the pluricomplex case (also see~\cite[Example 4.1]{Hiep14}). The complexity arises because a non-negative Radon measure may accumulate excessive mass near the boundary, precluding it from falling within the range of the Monge-Ampère operator. A forefront result concerning the existence of solutions can be found in~\cite[Proposition 4.3]{Hiep14} (see also Theorem~\ref{thm: sol in N}).

Let us now continue the existence of solutions when the Monge-Ampère measure can charge on pluripolar sets. To avoid the problem in previous paragraph we assume that we are instead in $\F=\{u\in\Ker:\int_{\Omega} (dd^c u)^n<+\infty\}$. It is well-known that there can be several solutions to a  Monge-Ampère equation with right hand side that can charge a pluripolar set, also with the solutions being non-comparable (see e.g. \cite{CelPol,Dem93,Lem83,Zer97}). We emphasize that, at present, there is only one known example of an atomless measure $\mu$ carried by a pluripolar set, for which a function $u \in \mathcal{F}$ exists that satisfies~(\ref{eq:MA}) (\cite[Example 4.10]{ACCP09}). To make the difficulties more transparent recall that there is function $u\in\F(\Omega)$ with $\overline{\{u=-\infty\}}=\Omega$, and $(dd^c u)^n=0$ on pluripolar sets (\cite[Theorem~5.8]{Ceg04}), but on the other hand there is a function $u\in\F(\Omega)$ with $\overline{\{u=-\infty\}}=\Omega$, and $(dd^c u)^n=\delta_0$ (\cite[Example 2.1]{AhagCegrellPham}). Even if $\mu=d\lambda\times\delta_0$ defined in the bidisc in $\mathbb{C}^2$, we are not even close to know whether there exists a $u\in\F$ with $(dd^c u)^n=\mu$. Here $d\lambda$ denotes the Lebesgue measure, and $\delta_0$ the Dirac measure, both defined in the unit disc in $\mathbb{C}$. For a few partial results see \cite{AhagCegrellPham}. The existence of the solution of the Monge-Ampère equation in $\Ker$ is far from being settled.

In Theorem~\ref{thm: uniqueness in N}, we solved the uniqueness of solutions under the assumption that the solutions are comparable in the sense of Definition~\ref{def: singular}, and we applied our result in Theorem~\ref{thm: sol in N}.

\medskip

\noindent $(3)$ \emph{Boundary values:} \emph{Given $u \in \mathcal{E}$, can we assert that $u$ also belongs to $\Ker(\tilde{u})$?} Cegrell verified this for functions $u \in \mathcal{E}$ with $\int (dd^c u)^n<+\infty$. The second-named author extended this result under the conditions that $(dd^c u)^n$ vanishes on pluripolar sets and for some function $\varphi \in \mathcal{PSH}(\Omega)$ with $\varphi < 0$, the following holds:
\[
\int_{\Omega}(-\varphi)(dd^c u)^n < +\infty
\]
(see~\cite{Cz09}). Subsequently, Ph{\d{a}}m in \cite{PhamPC} showed that the existence of boundary values does not require the vanishing of $(dd^c u)^n$ on all pluripolar sets. In Theorem~\ref{bvinN}, we presented a proof that slightly diverges from Ph{\d{a}}m's unpublished approach. However, the question of boundary values of plurisubharmonic functions in $\mathcal{E}$, as initially raised by Cegrell, remains open.

Moreover, as explored in Section~\ref{sec:RIC}, the principal properties of functions in $\Ker(H)$, such as idempotency, rooftop equality, and geodesic connectivity, stem from the properties of $H$. Selecting $H$ as $\widetilde{u}$ focuses on maximal plurisubharmonic functions in $\mathcal{E}$, which lack strong singularities inside the domain, and their boundary behavior predominantly determines the outcomes. An example illustrating strong boundary singularity is the pluricomplex Poisson kernel \cite{BPT}, discussed in \cite[Example 4.5]{Rash22}. In light of Theorem~\ref{thm: geodesic connectivity}, obtaining a method to verify whether two maximal plurisubharmonic functions share the  `same' singularity is crucial. Therefore, understanding the boundary values of these functions becomes imperative.

\medskip

This section does not aim to provide a comprehensive historical overview. For readers interested in the historical context, we recommend the following references~\cite{Bedford,Cz09,Kis00,Kol98,Krylov}.

\end{document}